\newcommand{\A}{\ensuremath{\mathfrak{A}}\xspace}
\newcommand{\B}{\ensuremath{\mathfrak{B}}\xspace}
\newcommand{\Z}{\ensuremath{\mathbb{Z}}\xspace}
\newcommand{\K}{\ensuremath{\operatorname{K}}\xspace}
\newcommand{\KO}{\ensuremath{\mathbb{K}}\xspace}
\newenvironment{smallpmatrix}{%
\left(\begin{smallmatrix}}{\end{smallmatrix}\right)}
\newcommand{\PS}{\ensuremath{\mathcal{S}_2^1}\xspace}
\newcommand{\CSP}{\ensuremath{\mathcal{CSP}}\xspace}
\newcommand{\Ocal}{\ensuremath{\mathcal{O}}\xspace}
\newcommand{\FK}{\ensuremath{\operatorname{K_X}}\xspace}
\newcommand{\RFK}{\ensuremath{\operatorname{K^{red}_X}}\xspace}
\newcommand{\FKD}{\ensuremath{\operatorname{K_\mathcal{CSP}}}\xspace}
\newcommand{\RFKD}{\ensuremath{\operatorname{K^{red}_\mathcal{CSP}}}\xspace}
\newcommand{\FKS}{\ensuremath{\operatorname{K_{\PS}}}\xspace}
\newcommand{\iso}{isomorphism\xspace}
\newcommand{\shom}{\mbox{$*$-}homomorphism\xspace}
\newcommand{\cafont}[1]{\ensuremath{\mathfrak{#1}}\xspace}
\newcommand{\ca}{\mbox{$C\sp*$-}al\-ge\-bra\xspace}
\newcommand{\cas}{\mbox{$C\sp*$-}al\-ge\-bras\xspace}
\newcommand{\KXa}{Kirchberg $X$-algebra\xspace}
\newcommand{\KXas}{Kirchberg $X$-algebras\xspace}
\newcommand{\KDa}{Kirchberg $\mathcal{CSP}$-algebra\xspace}
\newcommand{\KDas}{Kirchberg $\mathcal{CSP}$-algebras\xspace}
\newcommand{\KSas}{Kirchberg $\PS$-algebras\xspace}
\newcommand{\Op}{\mathbb{O}}
\newcommand{\Ideals}{\mathbb{I}}
\newcommand{\LC}{\mathbb{LC}}
\newcommand{\into}{\rightarrowtail}
\newcommand{\onto}{\twoheadrightarrow}
\newcommand{\KK}{\ensuremath{\operatorname{KK}}\xspace}
\newcommand{\Ext}{\ensuremath{\operatorname{Ext}}\xspace}
\newcommand{\Hom}{\ensuremath{\operatorname{Hom}}\xspace}
\newcommand{\NT}{\mathcal{NT}}
\newcommand{\cirkt}{concrete ideal-related \K-theory\xspace}
\newcommand{\rcirkt}{reduced concrete ideal-related \K-theory\xspace}
\newcommand{\csa}{$C^*$-algebra}
\newcommand{\CK}{Cuntz-Krieger algebra}
\newcommand{\rrzero}{real rank zero like}
\newcommand{\Rcal}{\mathcal{R}}
\newcommand{\Rcat}{\mathcal{R}}
\DeclareMathOperator{\id}{id}
\newcommand{\pdim}{\operatorname{pd}}
\newcommand{\idim}{\operatorname{id}}
\newtheorem{theorem}{Theorem}[section]
\newtheorem{lemma}[theorem]{Lemma}
\newtheorem{corollary}[theorem]{Corollary}
\newtheorem{proposition}[theorem]{Proposition}
{\theoremstyle{definition}\newtheorem{definition}[theorem]{Definition}
\newtheorem{remark}[theorem]{Remark}
\newtheorem{observation}[theorem]{Observation}
}
\numberwithin{equation}{section}
\title[Purely infinite $C^{*}$-algebras]{Classification of real rank zero, purely infinite $C^{*}$-algebras with at most four primitive ideals} 
\author{Sara E. Arklint \and Gunnar Restorff \and Efren Ruiz}
\address{Department of Mathematical Sciences, University of Copenhagen, Universitets\-parken~5, DK-2100~Copen\-hagen~Ø, Denmark}
\email{arklint@math.ku.dk}
\address{Department of Science and Technology, University of the Faroe Islands, N\'oat\'un~3, FO-100~T\'orshavn, the Faroe Islands}
\email{gunnarr@setur.fo}
\address{Department of Mathematics, University of Hawaii, Hilo, 200~W.~Kawili St., Hilo, HI, 96720-4091, Hawaii, USA}
\email{ruize@hawaii.edu}
\begin{document}

\maketitle

\begin{abstract}
Counterexamples to classification of purely infinite, nuclear, separable \cas  (in the ideal-related bootstrap class) and with primitive ideal space $X$ using ideal-related \K-theory occur for infinitely many finite primitive ideal spaces $X$, the smallest of which having four points.
Ideal-related \K-theory is known to be strongly complete for such \cas if they have real rank zero and $X$ has at most four points for all but two exceptional spaces: the pseudo-circle and the diamond space.
In this article, we close these two remaining cases.
We show that ideal-related \K-theory is strongly complete for real rank zero, purely infinite, nuclear, separable \cas that have  the pseudo-circle as primitive ideal space. % and is in the ideal-related bootstrap class.
In the opposite direction, we construct a \CK{} with the diamond space as its primitive ideal space for which an automorphism on ideal-related \K-theory does not lift.
\end{abstract}

\section{Introduction}
The $\K$-theoretical classification of simple, purely infinite, nuclear, separable \cas---the Kirchberg algebras---in the UCT class was achieved in the early 1990s by E. Kirchberg and N.C. Phillips, \cite{kirchberg1994,phillips2000}.
For nonsimple, $\Ocal_\infty$-absorbing, nuclear, separable \cas it depends heavily on the structure of the fixed primitive ideal space whether or not \K-theoretical classification is possible.
Apart from rather special cases, \cite{bentmann}, it remains an open question how strong assumptions are needed to ensure classification.

The classification of UCT Kirchberg algebras was preceeded by M. R\o{}rdam's classification of the simple \CK{}s, \cite{MR1340839,CK80}.
Following this lead, an approach to the classification of purely infinite, nuclear, separable \cas with finitely many ideals is to use the \CK{}s as a guideline.

The classification functor in question is the ideal-related \K-theory, encompassing the six-term exact sequences in \K-theory induced by all extensions of subquotients. This functor is known to classify the purely infinite \CK{}s up to stable isomorphism and independent of the structure of the primitive ideal space, \cite{MR2270572}.
However, this work left open the question of strong completeness.
The approach is now to understand which properties of the \CK{}s that enable its completeness and determine whether or not strong completeness is possible.

In this article, we achieve this for the remaining four-point primitive ideal spaces, namely the two spaces that can be represented by the graphs
$$\vcenter{\xymatrix{
\bullet\ar[r]\ar[d] & \bullet\ar[d] \\
\bullet\ar[r] & \bullet
}}\quad\text{and}\quad
\vcenter{\xymatrix{
\bullet\ar[r]\ar[d] & \bullet \\
\bullet & \bullet . \ar[l]\ar[u]
}}$$
After an introduction of necessary notation and definitions in~\S\ref{sec:pre}, 
the main results and a more detailed account of the chronological development are given in~\S\ref{sec:overview}.

\section{Prerequisites} \label{sec:pre}

In this section, we briefly introduce notation needed for stating the results, as well as it will be used in later sections to prove some of the results.

%discuss \cas over a topological space $X$ and the invariant introduced by R.~Meyer and R.~Nest in \cite{filtrated} called filtrated \K-theory.  We refer the reader to \cite{filtrated} for details.

%We would like to note that there are other invariants in the literature which are closely related to filtrated \K-theory.  Examples are filtered \K-theory and ideal related \K-theory.  It has been proved by R.~Meyer and R.~Nest in \cite{filtrated} and R.~Bentmann in~\cite{rasmus} that for the spaces $X_{i}$ that these invariants are naturally isomorphic to filtrated \K-theory.  It is not known if these invariants are naturally isomorphic for all finite topological spaces. %As our paper builds heavily on the work by R.~Meyer, R.~Nest, and R.~Bentmann, we use the notion filtrated \K-theory. 

%The invariant filtrated \K-theory is also called filtered \K-theory or ideal related \K-theory.  As we are making additions to the work of R.~Meyer and R.~Nest who introduced the name filtrated \K-theory, we use this name.

\subsection{\cas over a topological space $X$}

\cas over a topological space $X$ is considered in \cite{MR2953205} (as well as many other places). 
As we will only be considering 
 \cas over a finite $T_0$ space $X$, we will not define it in its full generality, but refer the reader to \cite{MR2953205} and the references therein for a more general approach. 
Assume from now on that $X$ is a finite topological space satisfying the $T_0$ separation axiom, i.e., such that $\overline{\{x\}}\neq\overline{\{y\}}$ for all $x,y\in X$ with $x\neq y$.
Let $\Op(X)$ denote the open subsets of $X$, and let $\Ideals(\A)$ denote the lattice of (two-sided, closed) ideals of a \ca \A. 
A (tight) \ca over $X$ can then 
be defined as a pair $(A,\psi)$ consisting of a \ca \A and a lattice (iso)morphism 
$\psi\colon\Op(X)\to\Ideals(\A)$
that preserves infima and suprema.
We write $\A(U)$ for $\psi(U)$. 

The locally closed subsets of $X$ are denoted by $\LC(X)=\{ U\setminus V \mid  V,U\in\Op(X), V\subseteq U\}$, and the connected, non-empty, locally closed subsets of $X$ are denoted by $\LC(X)^*$.
For a \ca \A over $X$ and for $Y\in\LC(X)$ we define $\A(Y)=\A(U)/\A(V)$ when $Y=U\setminus V$ for some $V,U\in\Op(X)$ satisfying $V\subseteq U$.  Up to canonical isomorphism, $\A(Y)$ does not depend on the choice of $U$ and $V$.

For \cas \A and \B over $X$, we say that a \shom $\phi\colon \A\to \B$ is $X$-equivariant if $\phi(\A(U))\subseteq \B(U)$ holds for all $U\in\Op(X)$.

A Kirchberg $X$-algebra is a nuclear, separable, tight \ca \A over $X$ that absorbs $\Ocal_{\infty}$, i.e., $\A\otimes\Ocal_\infty\cong\A$.  
For nuclear, separable \cas with finitely many ideals, $\Ocal_\infty$-absorption, pure infiniteness, and strong pure infiniteness are equivalent---this follows from results of Kirchberg, R\o{}rdam, Toms, and Winter (cf.~\cite[Corollary~1]{ERR:gjogv}). 

\subsection{Concrete ideal-related \K-theory}

Let $X$ be a finite $T_0$-space.
For a \ca \A over $X$, we let $\FK(\A)$ be the collection $(\K_i(\A(Y)))_{i=0,1,Y\in\LC(X)^*}$ of abelian groups together with the group homomorphisms in the cyclic six term exact sequence 
$$\xymatrix{
\K_0(\A(Y_1))\ar[r] & \K_0(\A(Y_2))\ar[r] & \K_0(\A(Y_2\setminus Y_1))\ar[d] \\
\K_1(\A(Y_2\setminus Y_1))\ar[u] & \K_1(\A(Y_2))\ar[l] & \K_1(\A(Y_1))\ar[l] 
}$$
induced by the short exact sequence $\A(Y_1)\into\A(Y_2)\onto\A(Y_2\setminus Y_1)$ whenever $Y_1\subsetneq Y_2$ where $Y_1,Y_2\in\LC(X)^*$. 
For \cas \A and \B over $X$, a homomorphism from $\FK(\A)$ to $\FK(\B)$ is a collection of group homomorphisms $\phi_{i,Y}$ from $\K_i(\A(Y))$ to $\K_i(\B(Y))$ for $i=0,1$, $Y\in\LC(X)^*$ that commute with all the relevant homomorphisms from the above cyclic six term sequences. 
In the obvious way, this defines a functor $\FK$ on the category of \cas over $X$. 

We call the functor \FK the \cirkt. In \cite{arXiv:1301.7223v2} it is called concrete filtered \K-theory, and it is known to be the same as filtrated \K-theory introduced by R.~Meyer and R.~Nest in \cite{MR2953205} for certain cases including all $T_0$-spaces with four or fewer points. 

In~\cite{MR2953205}, R.~Meyer and R.~Nest establish a UCT for $\KK_X$-theory %, i.e., they establish exactness of
%\[ \Ext_{\NT}^1(\FK(A),\FK(B)) \into \KK_*(X;A,B) \onto \Hom_{\NT}(\FK(A),\FK(B)) \]
%for $A$ and $B$ separable \cas over $X$ with $A$ belonging to the
and they define the bootstrap class $\mathcal{B}(X)$ in \cite[Definition~4.11]{MR2545613}. 
%, and with $\FK(A)$ having projective dimension at most 1 as a module over $\NT$.
%By construction, $\FK(R_Y)$ has projective dimension 0 for all $Y\in\LC(X)^*$.
By \cite[Corollary~4.13]{MR2545613}, a (tight) nuclear, separable \ca \A over $X$ belongs to $\mathcal{B}(X)$ if and only if its (simple) subquotients $\A(x)$, $x\in X$, belong to the bootstrap class of J.~Rosenberg and C.~Schochet. 

\subsection{Reduced concrete ideal-related \K-theory}
Let $X$ be a finite $T_0$-space.
For $x,y\in X$, we write $y\rightarrow x$ whenever $x\neq y$, $\overline{\{x\}}\subseteq\overline{\{y\}}$ and $\overline{\{x\}}\subseteq\overline{\{z\}}\subseteq\overline{\{y\}}$ implies that $x=z$ or $y=z$. 
Moreover, we let $\widetilde{\{x\}}$ denote the smallest open set containing $x$, and we let $\widetilde{\partial}x=\widetilde{\{x\}}\setminus\{x\}$. 

\begin{definition}[{\cite[Definition~3.1]{arXiv:1309.7162v1}}]
Let $\Rcat$ denote the universal pre-additive category generated by
objects $x_1$, $\widetilde{\partial} x_0$, $\widetilde{x}_0$ for all $x\in X$ and morphisms $\delta_{x_1}^{\widetilde{\partial} x_0}$ from $x_1$ to ${\widetilde{\partial} x_0}$ and $i_{{\widetilde{\partial} x_0}}^{\widetilde{x}_0}$ from ${\widetilde{\partial} x_0}$ to $\widetilde{x}_0$ for all $x\in X$, and $i^{{\widetilde{\partial} x_0}}_{\widetilde{y}_0}$ from $\widetilde{y}_0$ to ${\widetilde{\partial} x_0}$ whenever $y\rightarrow x$, subject to the relations
$$i_{\widetilde{\partial}x_0}^{\widetilde{x}_0} \delta_{x_1}^{\widetilde{\partial}x_0} =0,\text{ for all }x\in X$$
and
$$
i_{(\widetilde{z_2})_0}^{\widetilde{\partial}(z_1)_0}i^{(\widetilde{z_{2}})_0}_{\widetilde{\partial}(z_{2})_0} i_{(\widetilde{z_3})_0}^{\widetilde{\partial}(z_{2})_0} \cdots i^{(\widetilde{z_{n-1}})_0}_{\widetilde{\partial}(z_{n-1})_0} i_{(\widetilde{z_n})_0}^{\widetilde{\partial}(z_{n-1})_0}
= i_{(\widetilde{z_2'})_0}^{\widetilde{\partial}(z_1')_0} i^{(\widetilde{z_{2}'})_0}_{\widetilde{\partial}(z_{2}')_0}
 i_{(\widetilde{z_3'})_0}^{\widetilde{\partial}(z_{2}')_0} \cdots i^{(\widetilde{z_{n-1}'})_0}_{\widetilde{\partial}(z_{n-1}')_0} i_{(\widetilde{z_n'})_0}^{\widetilde{\partial}(z_{n-1}')_0}$$
%$$i_{(\widetilde{z_n})_0}^{\widetilde{\partial}(z_{n-1})_0}i^{(\widetilde{z_{n-1}})_0}_{\widetilde{\partial}(z_{n-1})_0}\cdots
%i_{(\widetilde{z_3})_0}^{\widetilde{\partial}(z_{2})_0}i^{(\widetilde{z_{2}})_0}_{\widetilde{\partial}(z_{2})_0}
%i_{(\widetilde{z_2})_0}^{\widetilde{\partial}(z_1)_0}
%=i_{(\widetilde{z_n'})_0}^{\widetilde{\partial}(z_{n-1}')_0}i^{(\widetilde{z_{n-1}'})_0}_{\widetilde{\partial}(z_{n-1}')_0}\cdots
%i_{(\widetilde{z_3'})_0}^{\widetilde{\partial}(z_{2}')_0}i^{(\widetilde{z_{2}'})_0}_{\widetilde{\partial}(z_{2}')_0}
%i_{(\widetilde{z_2'})_0}^{\widetilde{\partial}(z_1')_0}$$
whenever $y= z_n \rightarrow  \cdots \rightarrow z_1 = x$ and $y= z_n' \rightarrow  \cdots \rightarrow z_1' = x$. 
\end{definition}
Here we choose the convention of composition $fg$ of functions $f$ and $g$ as the map that sends $t$ to $f( g(t) )$.  This is the opposite convention made in \cite[Definition~3.1]{arXiv:1309.7162v1}, where the authors write the composition $fg$ as the map that sends $t$ to $g( f(t) )$.

\begin{definition}
For a  \ca \A over $X$, we let $\RFK(\A)$ be the collection 
$$\left(\K_0(\A(\widetilde{\{x\}})),\K_0(\A(\widetilde{\partial}x)),\K_1(\A(x))\right)_{x\in X}$$ 
of abelian groups together with the group homomorphisms $\K_1(\A(x))\rightarrow\K_0(\A(\widetilde{\partial}x))\rightarrow\K_0(\A(\widetilde{\{x\}}))$ 
%$$\xymatrix{
%\K_0(\A(\widetilde{\partial}x))\ar[r] & \K_0(\A(\widetilde{\{x\}})) &  \\
%\K_1(\A(x))\ar[u] &  & 
%}$$
in the cyclic six term exact sequence induced by the short exact sequence $\A(\widetilde{\partial}x)\into\A(\widetilde{\{x\}})\onto\A(x)$ for all $x\in X$ and the 
group homomorphisms $\K_0(\A(\widetilde{\{y\}}))\rightarrow\K_0(\A(\widetilde{\partial}x))$
%$$\xymatrix{
%\K_0(\A(\widetilde{\{y\}}))\ar[r] & \K_0(\A(\widetilde{\partial}x))  &  %\\
%%\K_1(\A(x))\ar[u] &  & 
%}$$
in the cyclic six term exact sequence induced by the short exact sequence $\A(\widetilde{\{y\}})\into\A(\widetilde{\partial}x)\onto\A(\widetilde{\partial}x\setminus \widetilde{\{y\}})$ whenever $y\rightarrow x$. 

A homomorphism from $\RFK(\A)$ to $\RFK(\B)$ is a collection of group homomorphisms 
$$\phi_{\widetilde{x}_0}\colon\K_0(\A(\widetilde{\{x\}}))\rightarrow \K_0(\B(\widetilde{\{x\}}))$$
$$\phi_{\widetilde{\partial}x_0}\colon\K_0(\A(\widetilde{\partial}x))\rightarrow\K_0(\B(\widetilde{\partial}x))$$
$$\phi_{x_1}\colon\K_1(\A(x))\rightarrow\K_1(\B(x))$$ 
for all $x\in X$ that commute with all the above maps. 
It is easy to verify that this in the natural way defines a functor $\RFK$ from the category of \cas over $X$ to the category of modules over $\Rcat$. 

We call the functor \RFK the \rcirkt. This functor was originally introduced in \cite{MR2270572} as reduced filtered \K-theory inspired by the reduced \K-web introduced by M.~Boyle and D.~Huang in \cite{MR1990568}. This reformulation is from \cite{arXiv:1309.7162v1}, where it also is called the reduced filtered \K-theory. 
\end{definition}

\begin{definition}[{\cite[Definition~3.5]{arXiv:1309.7162v1}}]
Let for an element $x$ in $X$, $DP(x)$ denote the set of pairs of distinct
paths $(p, q)$ in $X$ to $x$ and from some common element which is denoted $s(p, q)$. 
An $\Rcat$-module $M$ is called exact if the sequences
$$M(x_1) \rightarrow M(\widetilde{\partial}x_0 )\rightarrow M (\widetilde{x}_0 )$$
and 
$$\bigoplus_{(p,q)\in DP(x)} M (\widetilde{s(p, q)}_0 )
\rightarrow \bigoplus_{y\rightarrow x}M(\widetilde{y}_0)\rightarrow M(\widetilde{\partial}x_0)\rightarrow 0$$
are exact for all $x$ in $X$. 
\end{definition}
The notion of exactness of an $\Rcat$-module is inspired by $\RFK(\A)$ being exact for all real rank zero \cas \A over $X$,
\cite[Corollary~3.10]{arXiv:1309.7162v1}.

Since $\K_0(\A(X))$ is part of $\FK(\A)$, it is clear for the \cirkt what it should mean that a morphism preserves the class of the unit. For the \rcirkt, however, it is not generally the case, that $\K_0(\A(X))$ is part of the invariant. 
For a \ca \A over $X$ of real rank zero, we have an exact sequence (cf.~\cite[Lemma~5.2]{arXiv:1309.7162v1})
$$\bigoplus_{y\rightarrow x,y\rightarrow x' \\ x,x'\in X}\K_0(\A(\widetilde{\{y\}}))\rightarrow
\bigoplus_{x\in X}\K_0(\A(\widetilde{\{x\}}))\rightarrow 
\K_0(\A(X))\rightarrow 0.$$
Thus every morphism between the \rcirkt will induce a unique homomorphism on the level of the $\K_0$ of the algebras, and we can ask that this homomorphism preserves the class of the unit (cf.~\cite[Section~5]{arXiv:1309.7162v1}).

%Define concrete full ideal related K theory

%Define reduced ideal related K theory

%Define what an X algebra is

%Define the bootstrap class of R.~Meyer and R.~Nest (cf.~Definition~4.11 of \cite{meyernest_boot}). $\mathcal{B}(X)$.

%Define a Kirchberg X algebra (and address the problems PI/SPI/OIAbs. purely infinite, nuclear, separable, tight \cas over $X$

\subsection{The target category of \cirkt}\label{subsec:NT}

For spaces with at most four points, the functor filtered $\K$-theory defined in~\cite{MR2953205} is identical to \cirkt.
In fact, there are no known examples of finite spaces $X$ over which they do not agree.
%Filtered \K-theory is defined as a functor going from the category of separable \cas over $X$ with $X$-equivariant homomorphisms as morphisms to the category of (left) $\NT{}$-modules. 
For notational benefits, we use the definition of the category $\NT$ and the functor filtered $\K$-theory given in~\cite{arXiv:1301.7223v2}.  Let $X$ denote a finite $T_0$-space.

For \cas over $X$, we let $\Sigma^1=\Sigma$ denote suspension, i.e., tensoring by $C_0(\mathbb R)$, and let $\Sigma^0$ denote the identity functor. 

In~\cite{MR2953205}, R. Meyer and R. Nest construct for all $Y\in\LC(X)$ commutative \cas $\Rcal_Y$ over $X$ satisfying that $\KK_*(X;\Rcal_Y,-)$ is equivalent to $\K_*(-(Y))$ as functors of separable \cas over $X$.
Let $\NT$ denote the pre-additive category with objects $\LC(X)\times\{0,1\}$ and morphisms between $(Y,j)$ and $(Z,k)$ given by $\KK_0(X;\Sigma^k\Rcal_Z,\Sigma^j\Rcal_Y)$.
We call an additive functor $\NT\to\mathfrak{Ab}$ an $\NT$-module.
Equivalently, an $\NT$-module is a (left) module over the category ring of $\NT$, and we abuse notation by denoting this unital ring $\NT$.

For a \csa{} $\A$ over X, its filtered $\K$-theory is the $\NT$-module defined as $\left(\KK_0(X;\Sigma^j\Rcal_Y,\A)\right)_{(Y,j)\in\LC(X)\times\{0,1\}}$ with the transformations induced by Kasparov product from the left by $\KK_0(X;\Sigma^k\Rcal_Z,\Sigma^j\Rcal_Y)$.

 For $M$ a left or right $\NT$-module, we let $SM$ denote its suspension defined by $SM(Y,j)=M(Y,1-j)$, and extend this to $\NT$-morphisms in the obvious way.

Following \cite{MR2953205}, we define for $Y\in\LC(X)$ the (left) $\NT{}$-module $P_Y$ as $P_Y(Z,i)=\KK_0(X;\Sigma^i\Rcal_Z,\Rcal_Y)$.
If we for $Y\in\LC(X)$ let $e_Y$ and $\bar e_Y$ denote the class of the identity morphism in $\KK_0(X;\Rcal_Y,\Rcal_Y)$ resp.\ $\KK_0(X;\Sigma\Rcal_Y,\Sigma\Rcal_Y)$, it is easy to see that $e_{Y}$ and $\bar e_Y$ are idempotents and that $P_Y=\NT{}e_Y$ and $SP_Y\cong\NT\bar e_Y$.

Similarly, for $Y\in\LC(X)$, we define the \emph{right} $\NT{}$-module $Q_Y$ by $Q_Y(Z,i)=\KK_0(X;\Rcal_Y,\Sigma^i\Rcal_Z)$. 
We see that $Q_Y=e_Y\NT{}$ and $SQ_Y\cong\bar e_Y\NT$.

For notational convenience, we will, e.g., write $123$ instead of $\{1,2,3\}$, and $M(123_1)$ instead of $M(\{1,2,3\},1)$ for an $\NT$-module $M$ over a space $X$ with $1,2,3\in X$.

\section{Overview of results} \label{sec:overview}

In \cite{MR2270572}, the second named author classified all purely infinite Cuntz-Krieger algebras up to stable isomorphism using the \rcirkt (and using \cirkt, of course) --- this extended results of M.~Rørdam and D.~Huang. 
Although this in some way closed the Cuntz-Krieger algebra case, it raised many new question --- some of them were already posted in the addendum of this article. 
One natural question is whether there is strong classification for the stable case --- we say that a functor on a class of \cas (over $X$) is a strong classification functor if every isomorphism on the invariant level lifts to an ($X$-equivariant) isomorphism on the algebra level. 
Another natural question is whether the classification generalizes to \KXas in the bootstrap class $\mathcal{B}(X)$. 
A third natural question is how to obtain unital classification (for purely infinite Cuntz-Krieger algebras, or, more generally, for \KXas in the bootstrap class $\mathcal{B}(X)$). 

These three question turned out to be very closely related. 
In the papers \cite{MR2379290,MR2265044,arXiv:1301.7695v1}, it is shown that for purely infinite \cas, strong classification for the stabilized algebras usually implies strong classification for the unital case --- thus reducing the unital problem to lifting isomorphisms in the stable case. 

E.~Kirchberg has shown, that for separable, nuclear, stable, strongly purely infinite \cas, every invertible ideal-related $\KK$-element can be lifted to an isomorphism on the algebra level. 
In \cite{bonkat:phd,restorff:phd,MR2953205,arXiv:1101.5702v3} universal coefficient theorems have been provided for ideal-related \K-theory over so-called accordion spaces (a finite $T_0$-space $X$ is called an accordion space if the symmetrization of the graph of $X$ defined above is linear), thus providing a strong classification in both the stable and unital case for \KXas in the bootstrap class $\mathcal{B}(X)$, over accordion spaces (using the functor $\FK$ --- together with the class of the unit, in the unital case). 

Moreover, \cite{MR2953205,arXiv:1101.5702v3} provide counter examples to classification for \KXas in the bootstrap class $\mathcal{B}(X)$ for all finite $T_0$-spaces $X$, that are not disjoint unions of accordion spaces. 
There are, up to homeomorphism, six such $T_0$-spaces with $4$ (or fewer) points. 
In the following, it will be convenient with notation for two of these space, therefore we let
$$\mathcal{CSP}=\{1,2,3,4\}\quad\text{and}\quad\PS=\{1,2,3,4\}$$
denote the connected $T_0$-spaces with the topologies 
$$\{\emptyset,\{4\},\{2,4\},\{3,4\},\{2,3,4\},\{1,2,3,4\}\}$$
and
$$\{\emptyset,\{3\},\{4\},\{3,4\},\{1,3,4\},\{2,3,4\},\{1,2,3,4\}\},$$
respectively. These can also be illustrated by the graphs 
$$\vcenter{\xymatrix{
4\ar[r]\ar[d] & 2\ar[d] \\
3\ar[r] & 1
}}\quad\text{and}\quad
\vcenter{\xymatrix{
4\ar[r]\ar[d] & 1 \\
2 & 3\ar[l]\ar[u].
}},$$
respectively. 
In \cite{MR2949216} it was shown that for all $T_0$-spaces $X$ with $4$ or fewer points, that are not homeomorphic to $\mathcal{CSP}$ or $\PS$, we have strong classification for real rank zero \KXas in the bootstrap class $\mathcal{B}(X)$. 
Because there is no refined invariant for $\PS$, this case has been open and is solved in Corollary~\ref{cor-strong-classification-pseudo-circle}. 
Thus we now have the following.

\begin{theorem}[{\cite{MR2953205}, \cite{arXiv:1101.5702v3}, \cite{MR2949216}, Corollary~\ref{cor-strong-classification-pseudo-circle}}] \label{thm-fullFK}
Let $X$ be a $T_0$-space with at most four points, that is not homeomorphic to $\mathcal{CSP}$. 
Let, moreover, \A and \B be \KXas of real rank zero in the bootstrap class $\mathcal{B}(X)$. 
\begin{enumerate}
\item If \A and \B are stable, then every isomorphism from $\FK(\A)$ to $\FK(\B)$ can be lifted to an $X$-equivariant \iso from \A to \B. 

\item If \A and \B are unital, then every isomorphism from $\FK(\A)$ to $\FK(\B)$ that preserves the class of the unit can be lifted to an $X$-equivariant \iso from \A to \B. 
\end{enumerate} 
If $X$ is a disjoint union of accordion spaces, then we can drop the assumption about real rank zero. 
\end{theorem}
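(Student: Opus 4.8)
The plan is to assemble the cited results together with the pseudo-circle case proved later in the paper, so the work consists of verifying that these jointly exhaust the spaces allowed in the statement and of handling the stable-versus-unital passage uniformly.

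First I would reduce to connected spaces and sort the finitely many possibilities. A connected $T_0$-space with at most three points has a linear graph, hence is an accordion space; consequently every $T_0$-space with at most three points, and every disconnected one with four points, is a disjoint union of accordion spaces. The connected four-point spaces divide into the accordion ones and, up to homeomorphism, six non-accordion ones, among which are $\mathcal{CSP}$ and $\PS$. Since $\FK$, $X$-equivariance, and the class of the unit all respect the decomposition of $X$ into clopen components, it suffices to treat (i) accordion spaces, (ii) the four non-accordion connected four-point spaces different from $\mathcal{CSP}$ and $\PS$, and (iii) $X=\PS$.

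For (i) no real rank zero hypothesis is needed: for an accordion space $X$ there is a universal coefficient theorem computing $\KK_X$ on $\mathcal{B}(X)$ from $\FK$ (\cite{bonkat:phd,restorff:phd,MR2953205,arXiv:1101.5702v3}), and by the standard deduction of classification from such a UCT an isomorphism $\FK(\A)\to\FK(\B)$ --- respecting the class of the unit in the unital case --- lifts to an invertible element of $\KK_X(\A,\B)$ inducing it. Kirchberg's theorem, that every invertible element of $\KK_X(\A,\B)$ between separable, nuclear, stable, strongly purely infinite \cas over $X$ lifts to an $X$-equivariant \iso $\A\to\B$, then gives (1), and the reduction of the unital classification problem to the stable one for purely infinite algebras (\cite{MR2379290,MR2265044,arXiv:1301.7695v1}) gives (2); this also accounts for the last sentence of the theorem. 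For (ii) I would invoke \cite{MR2949216}, which provides exactly (1) and (2) for real rank zero \KXas in $\mathcal{B}(X)$ whenever $X$ is a four-point $T_0$-space not homeomorphic to $\mathcal{CSP}$ or $\PS$. Case (iii) is Corollary~\ref{cor-strong-classification-pseudo-circle}, proved in the body of the paper.

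I expect the real obstacle to lie entirely in case (iii); the steps above are bookkeeping. Over $\PS$ the functor $\FK$ is not complete for general Kirchberg $\PS$-algebras, and, unlike the remaining four-point spaces, no finer invariant is available, so one cannot simply run a UCT argument for $\NT$-modules. What the real rank zero hypothesis provides is that $\RFK(\A)$ is an exact $\Rcat$-module (\cite[Corollary~3.10]{arXiv:1309.7162v1}); the heart of the matter will be to exploit this exactness --- through projective resolutions in, or an obstruction theory adapted to, the category of exact $\Rcat$-modules --- to show that the obstruction to lifting an isomorphism of invariants vanishes for real rank zero algebras. Controlling that obstruction term over $\PS$ is the crux, and it is what the remaining sections are devoted to.
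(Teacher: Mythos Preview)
Your assembly of the cited results is correct and matches how the paper itself presents Theorem~\ref{thm-fullFK}: it is a compilation, with the accordion cases handled by the UCT of \cite{bonkat:phd,restorff:phd,MR2953205,arXiv:1101.5702v3}, the remaining non-accordion four-point spaces other than $\mathcal{CSP}$ and $\PS$ by \cite{MR2949216}, the space $\PS$ by Corollary~\ref{cor-strong-classification-pseudo-circle}, and the passage from stable to unital by \cite{MR2379290,MR2265044,arXiv:1301.7695v1}.

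Your speculation about how case~(iii) is proved, however, points in the wrong direction. The argument for $\PS$ does not take place in the category of $\Rcat$-modules, and exactness of $\RFK(\A)$ plays no role. Everything happens in the target category of full $\FK$, i.e.\ with $\NT$-modules. The input is a result of Bentmann--Meyer (\cite{rasmus-and-ralf}): if $\FK(\A)$ has projective dimension at most~$2$ as an $\NT$-module and $\Ext^2_{\NT}(\FK(\A),S\FK(\A))=0$, then every isomorphism of $\FK$ lifts to a $\KK_X$-equivalence. Projective dimension $\le 2$ holds for all four-point spaces by \cite{bentmann:gjogv}, so the crux is the $\Ext^2$ vanishing (Proposition~\ref{prop:ext-vanishes}). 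The real rank zero hypothesis enters as the ``\rrzero{}'' condition on the $\NT$-module $M=\FK(\A)$, namely that the even-to-odd boundary maps vanish. The proof then repeatedly passes to a kernel or cokernel of a map to or from a module of the form $S^iP_Y\otimes_\Z G$ or $\Hom_\Z(S^iQ_Y,G)$, each of which has both projective and injective dimension $\le 1$; by Lemma~\ref{lem-reduction-ext-2} this preserves the $\Ext^2$ question while successively killing entries of $M$, until one reaches a module covered by an explicit resolution computation (Lemma~\ref{lem-ext-2-is-zero-for-reduced}). The \rrzero{} condition is exactly what makes the required maps mono or epi at each reduction step.
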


As mentioned above, if $X$ is not the disjoint union of accordion space, then the condition about real rank zero in the theorem above cannot be dropped. 
Moreover, it has been shown in \cite{MR2949216}, that there is even a counterexample to classification, with a pair of stable \KDas of real rank zero in the bootstrap class $\mathcal{B}(\mathcal{CSP})$. 

%If $X$ is not a disjoint union of accordion spaces, then the assumption about real rank zero in Theorem~\ref{thm-fullFK} cannot be dropped. 
\begin{theorem}[{\cite{MR2953205}, \cite{arXiv:1101.5702v3}, \cite{MR2949216}}] \label{thm-counterexamples1}
Let $X$ be a connected $T_0$-space with at most four points and assume that $X$ is not an accordion space.
Then $X$ has four points, and there exist non-isomorphic stable \KXas \A and \B in the bootstrap class $\mathcal{B}(X)$ and an isomorphism from $\FK(\A)$ to $\FK(\B)$.
The \KXas \A and \B can be chosen such that $\K_*(\A(Y))$ and $\K_*(\B(Y))$ are finitely generated for all $Y\in\Op(X)$.
If $X=\mathcal{CSP}$, then \A and \B can furthermore be chosen to have real rank zero. 
\end{theorem}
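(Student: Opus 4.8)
The plan is to reduce to the four-point case and then, for each of the relevant spaces, to exhibit a finitely generated module of projective dimension two over the category ring $\NT$ and to exploit the resulting failure of uniqueness of realizations. Every connected $T_0$-space with at most three points is an accordion space, since the symmetrization of the Hasse diagram of a connected poset on at most three elements is always a path; hence a connected non-accordion space $X$ must have exactly four points. Up to homeomorphism there are only finitely many connected $T_0$-spaces on four points, and inspection of their symmetrized Hasse diagrams shows that exactly six of them fail to be accordion spaces, among them $\CSP$ and $\PS$; this enumeration is carried out in \cite{MR2953205,arXiv:1101.5702v3}. It therefore suffices to treat these six spaces one at a time.

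\emph{A module of projective dimension two.} Fix one of the six spaces $X$. The key homological input, obtained by analysing the category ring $\NT$ associated with $X$, is that there is a finitely generated $\NT$-module $M$, all of whose groups $M(Y,j)$ are finitely generated, with $\pdim_{\NT}M = 2$. When $X = \CSP$ one furthermore chooses $M$ to be of the type realized by \cas of real rank zero, that is, with exact associated $\Rcat$-module.

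\emph{Two realizations of $M$.} By the existence part of the Meyer--Nest universal coefficient theorem, $M$ is realized as $\FK(\A)$ for some \KXa $\A$ in the bootstrap class $\mathcal{B}(X)$, and one may arrange $\K_*(\A(Y))$ to be finitely generated for all $Y \in \Op(X)$. The uniqueness part of the theorem fails precisely because $\pdim_{\NT}M = 2$: in the Meyer--Nest universal coefficient spectral sequence computing $\KK(X;\A,\B)$ there is a nonzero contribution from $\Ext^{2}_{\NT}$, and from it one reads off that $M$ admits at least two realizations up to $\KK(X)$-equivalence. Thus there is a second \KXa $\B \in \mathcal{B}(X)$ with $\FK(\B) \cong M \cong \FK(\A)$ but with $\A$ and $\B$ not $\KK(X)$-equivalent; since any $X$-equivariant isomorphism induces an invertible element of $\KK_0(X;\A,\B)$, the algebras $\A$ and $\B$ are non-isomorphic, while the composite of the two identifications with $M$ is an isomorphism from $\FK(\A)$ to $\FK(\B)$. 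Finite generation of all the $\K$-groups is inherited from $M$, and for $X = \CSP$ the choice of $M$ with exact associated $\Rcat$-module makes $\A$ and $\B$ of real rank zero; this refinement is the content of \cite{MR2949216}.

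\emph{Expected main obstacle.} The real work lies entirely in the homological algebra over $\NT$ for the six spaces: producing, for each, a finitely generated module $M$ with $\pdim_{\NT}M = 2$ whose $\Ext^{2}$-obstruction to unique realization is genuinely nonzero, and then, for $\CSP$, carrying this out inside the far more rigid class of modules with exact $\Rcat$-reduction (equivalently, of real rank zero \cas), where one must check that the real rank zero constraint does not annihilate the obstruction.
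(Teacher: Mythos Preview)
The paper does not supply its own proof of this theorem; it is stated purely as a citation of \cite{MR2953205}, \cite{arXiv:1101.5702v3}, and \cite{MR2949216}, with the surrounding discussion in \S\ref{sec:overview} summarizing what those references establish. So there is no argument in the paper to compare your sketch against directly.

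That said, your outline matches the strategy of the cited references: enumerate the six connected non-accordion four-point spaces, produce for each a finitely generated exact $\NT$-module of projective dimension~$2$, and use the Meyer--Nest obstruction theory to show it admits inequivalent liftings in $\mathcal{B}(X)$, hence non-isomorphic \KXas with isomorphic filtered $\K$-theory. Two small points are worth tightening. First, projective dimension~$2$ alone does not immediately yield two inequivalent liftings; what is actually needed (and what the references verify) is that $\Ext_{\NT}^2(M,SM)\neq 0$, since this group parametrizes the obstruction to uniqueness of liftings---your phrase ``one reads off'' glosses over this. Second, your characterization of the real-rank-zero case via ``exact associated $\Rcat$-module'' is not the right condition: in the paper's terminology, the relevant property of the $\NT$-module is being \emph{\rrzero{}} (all maps $\delta_{Y\setminus U_0}^{U_1}$ vanish), and for \KXas this is equivalent to real rank zero. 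Exactness of the $\Rcat$-module is a consequence of real rank zero, not a characterization of it.
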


In \cite{arXiv:1301.7223v2}, it is shown that for real rank zero \cas over $X$ with free $K_1$, every isomorphism of the \rcirkt can be lifted to an isomorphism of the \cirkt in the case that $X$ is a $T_0$-space with at most $4$ points.
%, that is not homeomorphic to $\mathcal{CSP}$. 
Also they show that the stable, real rank zero \KDas in the bootstrap class $\mathcal{B}(\mathcal{CSP})$ with free $K_1$ are strongly classified by the \rcirkt. 
If we combine this with the results mentioned above (in specific \cite{arXiv:1301.7695v1} and Theorem~\ref{thm-fullFK}), then we get the following. 

\begin{theorem}[{\cite{MR2953205}, \cite{arXiv:1101.5702v3}, \cite{MR2949216}, \cite{arXiv:1301.7223v2}, Corollary~\ref{cor-strong-classification-pseudo-circle}}] \label{thm-redFK}
Let $X$ be a $T_0$-space with at most four points. 
Let, moreover, \A and \B be \KXas of real rank zero in the bootstrap class $\mathcal{B}(X)$, and assume that $\K_1(\A(x))$ is free for all non-open points $x\in X$. 
%Assume that the primitive ideal space of $A$ and $B$ both are homeomorphic to $X_i$ for an $i=1,2,4,5,7,8,9,10$. 
\begin{enumerate}
\item If \A and \B are stable, then every isomorphism from $\RFK(\A)$ to $\RFK(\B)$ can be lifted to an $X$-equivariant \iso from \A to \B. 
\item If \A and \B are unital, then every isomorphism from $\RFK(\A)$ to $\RFK(\B)$ that preserves the class of the unit can be lifted to an $X$-equivariant \iso from \A to \B. 
\end{enumerate} 
\end{theorem}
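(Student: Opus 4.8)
The plan is to deduce the theorem from the reduction of \cite{arXiv:1301.7223v2}, the \cirkt-classification recorded in Theorem~\ref{thm-fullFK}, and a separate treatment of the space $\mathcal{CSP}$, and then to bootstrap from the stable to the unital statement. First note that the hypothesis is in fact symmetric in \A and \B: an isomorphism $\RFK(\A)\to\RFK(\B)$ supplies isomorphisms $\K_1(\A(x))\cong\K_1(\B(x))$ for every $x\in X$, so $\K_1(\B(x))$ is free at every non-open point as well. For the stable statement with $X\not\cong\mathcal{CSP}$, I would take an isomorphism $\alpha\colon\RFK(\A)\to\RFK(\B)$ and first apply \cite{arXiv:1301.7223v2}: over a $T_0$-space with at most four points, every isomorphism of the \rcirkt between real rank zero \cas with free $\K_1$ at the non-open points lifts to an isomorphism $\beta\colon\FK(\A)\to\FK(\B)$ of the \cirkt restricting to $\alpha$. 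Then Theorem~\ref{thm-fullFK}(1)---whose content for the pseudo-circle $\PS$ is exactly Corollary~\ref{cor-strong-classification-pseudo-circle}---lifts $\beta$ to an $X$-equivariant \iso $\A\to\B$.

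For the stable statement when $X=\mathcal{CSP}$, Theorem~\ref{thm-fullFK} is not available---indeed $\FK$ is not complete on this space even under the real rank zero assumption, by Theorem~\ref{thm-counterexamples1}---so instead I would invoke directly the result of \cite{arXiv:1301.7223v2} that the stable, real rank zero \KDas in $\mathcal{B}(\mathcal{CSP})$ with free $\K_1$ at the non-open points $1,2,3$ are strongly classified by $\RFK$. This is precisely part (1) in this case.

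For the unital statement I would reduce to the stable case. Real rank zero gives the exact sequence
\[ \bigoplus_{y\rightarrow x,\; y\rightarrow x'}\K_0(\A(\widetilde{\{y\}}))\longrightarrow\bigoplus_{x\in X}\K_0(\A(\widetilde{\{x\}}))\longrightarrow\K_0(\A(X))\longrightarrow 0, \]
so every morphism of the \rcirkt induces a well-defined homomorphism on $\K_0(-(X))$, and the hypothesis means this homomorphism sends $[1_\A]$ to $[1_\B]$. Since $\RFK$ is invariant under stabilization, the given isomorphism of the \rcirkt transports to an isomorphism $\RFK(\A\otimes\mathbb K)\to\RFK(\B\otimes\mathbb K)$; applying part (1) yields an $X$-equivariant \iso $\A\otimes\mathbb K\to\B\otimes\mathbb K$ restricting to it, hence inducing the prescribed map on $\K_0(-(X))$ and in particular preserving the class of the unit. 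The passage from stable to unital strong classification for purely infinite \cas established in \cite{arXiv:1301.7695v1} then produces a unital $X$-equivariant \iso $\A\to\B$; alternatively, when $X\ne\mathcal{CSP}$, one may simply feed the unit-preserving $\beta$ above into Theorem~\ref{thm-fullFK}(2).

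The main obstacle is not in this assembly but in its inputs: the \cirkt-completeness of the pseudo-circle is Corollary~\ref{cor-strong-classification-pseudo-circle}, proved later in the paper, and the reduction from the \rcirkt to the \cirkt is the technical heart of \cite{arXiv:1301.7223v2}. What needs genuine care here is matching the freeness assumption to the one under which that reduction holds, keeping track that the lifts (to $\FK$ in general, and directly to the algebra level over $\mathcal{CSP}$) restrict to the originally given morphism of the \rcirkt, and verifying that the induced map on $\K_0(-(X))$ preserves the class of the unit so that the unital-from-stable machinery of \cite{arXiv:1301.7695v1} applies.
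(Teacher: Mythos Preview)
Your proposal is correct and follows essentially the same route as the paper: the paragraph preceding the theorem explains that one combines the lifting result of \cite{arXiv:1301.7223v2} (from $\RFK$ to $\FK$, resp.\ the direct strong classification over $\mathcal{CSP}$) with Theorem~\ref{thm-fullFK} for $X\not\cong\mathcal{CSP}$, and then invokes \cite{arXiv:1301.7695v1} to pass from the stable to the unital statement. Your write-up is somewhat more explicit about the symmetry in \A and \B and about tracking that the lifts restrict to the given $\RFK$-isomorphism, but the underlying assembly is identical.
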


\begin{corollary}
For purely infinite Cuntz-Krieger algebras and purely infinite graph algebras with at most four primitive ideals, the \rcirkt 
%the reduced filtered \K-theory 
is a strong classification functor up to stable isomorphism. 
For purely infinite Cuntz-Krieger algebras and unital, purely infinite graph algebras with at most four primitive ideals, the \rcirkt 
%the reduced filtered \K-theory 
together with the class of the unit is a strong classification functor up to unital isomorphism. 
\end{corollary}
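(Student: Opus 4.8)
The plan is to deduce the corollary directly from Theorem~\ref{thm-redFK}; the actual work is to check that purely infinite Cuntz--Krieger and graph \cas with at most four primitive ideals satisfy its hypotheses. So first I would fix such an algebra \A and set $X=\Prim(\A)$. Since the primitive ideal space of any \ca is $T_0$ and carries a canonical lattice isomorphism $\Ideals(\A)\cong\Op(X)$, the hypothesis ``at most four primitive ideals'' makes $X$ a finite $T_0$-space with $|X|\le 4$ and turns \A into a tight, nuclear, separable \ca over $X$. Then I would verify, in this order: that \A has real rank zero; that \A is $\Ocal_\infty$-absorbing, hence a \KXa; that $\A$ lies in $\mathcal B(X)$; and that $\K_1(\A(x))$ is free for every non-open $x\in X$.

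For real rank zero: a graph \ca has real rank zero precisely when its graph satisfies Condition~(K), and pure infiniteness already forces Condition~(K) --- a vertex lying on a unique return path would, after passing to a suitable quotient, yield a subquotient of the form $C(\mathbb{T})$ (or $C(\mathbb{T})\otimes\mathcal K$), which is impossible because ideals and quotients of a purely infinite \ca are again purely infinite; the Cuntz--Krieger case is the analogous statement with Cuntz's Condition~(II). For $\Ocal_\infty$-absorption: \A is nuclear, separable, purely infinite and has finitely many ideals, hence is strongly purely infinite and $\Ocal_\infty$-absorbing by \cite[Corollary~1]{ERR:gjogv}, so \A is a \KXa. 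For membership in $\mathcal B(X)$: each simple subquotient $\A(x)$ is a graph (resp.\ Cuntz--Krieger) \ca and therefore lies in the Rosenberg--Schochet bootstrap class, so $\A\in\mathcal B(X)$ by \cite[Corollary~4.13]{MR2545613}. For freeness of $K_1$: under Condition~(K) every subquotient $\A(Y)$, $Y\in\LC(X)$, is Morita equivalent to a graph \ca, and the $K_1$-group of a graph \ca is the kernel of an integer matrix, hence free; in particular $\K_1(\A(x))$ is free for every $x\in X$.

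With these checks in place, the stable statement follows by applying Theorem~\ref{thm-redFK}(1) to $\A\otimes\mathcal K$ and $\B\otimes\mathcal K$: stabilization preserves nuclearity, separability, real rank zero, $\Ocal_\infty$-absorption, membership in $\mathcal B(X)$ and freeness of the $K_1$-groups of all subquotients, while $\RFK(\A)=\RFK(\A\otimes\mathcal K)$ canonically; hence every isomorphism $\RFK(\A)\to\RFK(\B)$ lifts to an $X$-equivariant \iso $\A\otimes\mathcal K\to\B\otimes\mathcal K$, which is exactly the assertion that \RFK is a strong classification functor up to stable isomorphism. The unital statement is then Theorem~\ref{thm-redFK}(2) verbatim: unital Cuntz--Krieger algebras and graph \cas of graphs with finitely many vertices satisfy the hypotheses by the same checks, so any isomorphism $\RFK(\A)\to\RFK(\B)$ preserving the class of the unit lifts to an $X$-equivariant \iso $\A\to\B$, which is automatically unit-preserving.

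I expect the only genuinely non-formal step to be the real-rank-zero claim --- concretely, the implication ``purely infinite $\Rightarrow$ Condition~(K)'' for graph algebras (and the Condition~(II) analogue for Cuntz--Krieger algebras), which is where one must use graph/matrix combinatorics rather than cite a black box. Everything else is bookkeeping against Theorem~\ref{thm-redFK}.
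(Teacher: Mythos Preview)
Your proposal is correct and follows exactly the approach the paper intends: the corollary is stated immediately after Theorem~\ref{thm-redFK} with no proof, so the implicit argument is precisely the hypothesis check you carry out (real rank zero via Condition~(K)/(II), $\Ocal_\infty$-absorption via \cite[Corollary~1]{ERR:gjogv}, bootstrap membership via \cite[Corollary~4.13]{MR2545613}, and freeness of $\K_1$ of the simple subquotients from the graph-algebra structure), followed by a direct application of parts~(1) and~(2).
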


This leads one to wonder whether \cirkt is a strong classification functor for stable real rank zero \KDas in the bootstrap class $\mathcal{B}(\mathcal{CSP})$ with free $K_1$. 
Quite surprisingly, this turns out not to be the case --- we can even get the counter example to be the stabilization of a purely infinite Cuntz-Krieger algebra, cf.\ Proposition~\ref{prop-non-lifting-of-automorphisms}. 

\begin{theorem}[{Proposition~\ref{prop-non-lifting-of-automorphisms}}] \label{thm-counterexamples2}
%\item 
There exists a stable \KDa $\mathfrak{A}$ in the bootstrap class $\mathcal{B}(\mathcal{CSP})$ and an automorphism of $\FKD(\A)$ that can not be lifted to an $\mathcal{CSP}$-equivariant automorphism on \A.  Moreover, this \ca \A can be chosen to be the stabilization of a Cuntz-Krieger algebra.
%\end{enumerate}
%Counterexamples
%There exist stable, purely infinite, nuclear, separable \cas of real rank zero in the bootstrap class of R.~Meyer and R.~Nest (cf.~Definition 4.11 of \cite{meyernest_boot}) with the primitive ideal space homeomorphic to $X_3$, which have isomorphic filtrated $\K$-theory without being isomorphic. 
\end{theorem}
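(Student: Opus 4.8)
The plan is to reduce the statement to a homological computation over the category ring $\NT$ attached to $\mathcal{CSP}$, and then to realize the required data by a Cuntz--Krieger algebra. First note that if $\varphi$ is a $\mathcal{CSP}$-equivariant automorphism of a stable \KDa $\mathfrak{A}$ in $\mathcal{B}(\mathcal{CSP})$, then, since $\FKD$ is a $\KK_{\mathcal{CSP}}$-functor (on objects it coincides with $(\KK_*(\mathcal{CSP};\Rcal_Y,-))_{Y\in\LC(\mathcal{CSP})^*}$ under the identification of \cirkt with filtered \K-theory), its class $[\varphi]\in\KK_{\mathcal{CSP}}(\mathfrak{A},\mathfrak{A})$ is sent to $\FKD(\varphi)$ by the natural map $\Gamma\colon\KK_{\mathcal{CSP}}(\mathfrak{A},\mathfrak{A})\to\operatorname{End}_{\NT}(\FKD(\mathfrak{A}))$. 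Hence it suffices to exhibit a stable \KDa $\mathfrak{A}$ in $\mathcal{B}(\mathcal{CSP})$ that is the stabilization of a \CK{}, together with an automorphism $\alpha$ of $\FKD(\mathfrak{A})$ not lying in the image of $\Gamma$.

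The image of $\Gamma$ is controlled by the Meyer--Nest universal coefficient spectral sequence for $\KK$ over $\mathcal{CSP}$, cf.~\cite{MR2953205}: the $\NT$-modules of the form $\FKD(\mathfrak{A})$ that occur here have projective dimension at most $2$ (for $\mathcal{CSP}$ this dimension is $2$), so the spectral sequence is concentrated in the three columns $\Ext^0_{\NT},\Ext^1_{\NT},\Ext^2_{\NT}$, leaving a single possibly nonzero $E_2$-differential. This is a natural derivation
\[
d_2\colon\operatorname{End}_{\NT}\bigl(\FKD(\mathfrak{A})\bigr)\longrightarrow\Ext^2_{\NT}\bigl(\FKD(\mathfrak{A}),S\FKD(\mathfrak{A})\bigr)
\]
satisfying $d_2(\id)=0$, and the machinery identifies $\operatorname{im}\Gamma$ with $\ker d_2$. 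So it is enough to produce $\mathfrak{A}$ as above with $\FKD(\mathfrak{A})$ of projective dimension exactly $2$ (so the target of $d_2$ is nonzero) together with an automorphism $\alpha$ of $\FKD(\mathfrak{A})$ with $d_2(\alpha)\neq0$.

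To manufacture such an $\alpha$ I would build on the counterexample pair of Theorem~\ref{thm-counterexamples1}: stable, real rank zero \KDas $\mathfrak{A}_0\not\cong\mathfrak{B}_0$ in $\mathcal{B}(\mathcal{CSP})$ with finitely generated \K-theory and a fixed isomorphism $\psi\colon\FKD(\mathfrak{A}_0)\to\FKD(\mathfrak{B}_0)$; write $N$ for the common module. Since $\mathfrak{A}_0\not\cong\mathfrak{B}_0$, Kirchberg's theorem forces $\mathfrak{A}_0$ and $\mathfrak{B}_0$ to be inequivalent in $\KK_{\mathcal{CSP}}$, so the ``twist class'' $t\in\Ext^2_{\NT}(N,SN)$ recording the difference between $\mathfrak{A}_0$ and $\mathfrak{B}_0$ --- in coordinates where $\FKD(\mathfrak{A}_0)$ and $\FKD(\mathfrak{B}_0)$ are identified via $\psi$ --- is nonzero. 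I would then realize the $\NT$-module $M=N\oplus N$ by a purely infinite Cuntz--Krieger algebra $\mathfrak{A}$ tight over $\mathcal{CSP}$ (using the range part of Restorff's classification \cite{MR2270572} of purely infinite Cuntz--Krieger algebras by \cirkt, together with an explicit graph) arranged so that the $\KK_{\mathcal{CSP}}$-class of $\mathfrak{A}$ records the classes of $\mathfrak{A}_0$ and $\mathfrak{B}_0$ on the two summands. Taking $\alpha$ to be the flip $\begin{smallpmatrix}0&1\\1&0\end{smallpmatrix}$ of $M$ in these coordinates and using that $d_2$ transforms affinely under a change of realizing object (namely by composition with the twist class), one finds $d_2(\alpha)=\begin{smallpmatrix}0&0\\t&0\end{smallpmatrix}\neq0$, because $d_2$ kills the identity of each copy of $N$. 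Conceptually: a $\mathcal{CSP}$-equivariant lift of $\alpha$ would yield a $\KK_{\mathcal{CSP}}$-equivalence between the two twisted copies, hence essentially a $\KK_{\mathcal{CSP}}$-equivalence $\mathfrak{A}_0\simeq\mathfrak{B}_0$, contradicting $\mathfrak{A}_0\not\cong\mathfrak{B}_0$.

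I expect the main obstacle to be twofold. First, one must pin down $M$ and an explicit finite graph realizing it, and verify that the associated Cuntz--Krieger algebra is tight over $\mathcal{CSP}$ with simple subquotients in the UCT class and that its $\KK_{\mathcal{CSP}}$-class realizes the desired twist; because purely infinite Cuntz--Krieger algebras are rigidly classified by \cirkt, this is a genuine constraint and forces careful graph-theoretic bookkeeping. Second --- the technical heart --- one must carry out the homological computation over the ring $\NT$ of $\mathcal{CSP}$: construct a length-two projective resolution of $\FKD(\mathfrak{A})$, identify $\Ext^2_{\NT}(\FKD(\mathfrak{A}),S\FKD(\mathfrak{A}))$ and an effective formula for the differential $d_2$ (this is where the two incomparable ``middle'' points of $\mathcal{CSP}$, and the resulting $\operatorname{Tor}$-term, enter), and thereby verify $d_2(\alpha)\neq0$ rigorously rather than heuristically. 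An alternative route is to adapt the Meyer--Nest mapping-cone construction of the $\mathcal{CSP}$-counterexamples so that it directly produces an automorphism with nonzero obstruction rather than a pair of algebras; this replaces the homological algebra by an explicit extension-theoretic computation, but the obstacle --- proving the obstruction class is nonzero --- is the same.
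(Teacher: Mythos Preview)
Your approach has a genuine gap that breaks the Cuntz--Krieger part of the statement. You want to realize $M=N\oplus N$, with $N=\FKD(\A_0)\cong\FKD(\B_0)$ coming from the real rank zero counterexample pair of Theorem~\ref{thm-counterexamples1}, by a Cuntz--Krieger algebra. But observe that such a pair $\A_0\not\cong\B_0$ is \emph{forced} to have torsion in $\K_1$ at some non-open point: an isomorphism $\FKD(\A_0)\cong\FKD(\B_0)$ restricts to an isomorphism $\RFKD(\A_0)\cong\RFKD(\B_0)$, so if $\K_1(\A_0(x))$ were free for all non-open $x$, Theorem~\ref{thm-redFK} would give $\A_0\cong\B_0$. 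Hence $N$ (and therefore $N\oplus N$) violates the hypothesis of the range result Theorem~\ref{thm-range}, and no Cuntz--Krieger algebra can carry this module. More generally, you cannot arrange a Cuntz--Krieger algebra whose $\KK_{\mathcal{CSP}}$-class ``records $\A_0$ and $\B_0$ on the two summands'', because any Cuntz--Krieger algebra with the reduced invariant of $\B_0$ is already $\KK_{\mathcal{CSP}}$-equivalent to $\A_0$ by Theorem~\ref{thm-redFK}; the twist you need simply does not live in the Cuntz--Krieger range. A secondary issue is that your computation of $d_2(\alpha)$ is heuristic: the claim that $d_2$ ``transforms affinely'' with the asserted matrix formula, and the conceptual argument that a lift of the flip yields an invertible $\KK_{\mathcal{CSP}}$-element $\A_0\to\B_0$ (rather than merely an element inducing $\psi$ on $\FKD$), both require the Bentmann--Meyer obstruction machinery made precise, not just invoked.

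The paper proceeds quite differently and avoids both pitfalls. Rather than the spectral-sequence $d_2$, it uses Bentmann's refined invariant $FK'$ for $\mathcal{CSP}$, which satisfies a UCT: non-extendability to $FK'$ is exactly the obstruction to lifting. It starts from the specific Cuntz--Krieger algebra $\Ocal_A$ (which already has free $\K_1$ and $\FKD$ of projective dimension $2$), takes $N=\FKD(\Sigma\Rcal_{234}\oplus\Ocal_A)$, writes down an explicit upper-triangular automorphism of $N$ by hand, and shows by a short integer-matrix contradiction that it cannot extend to the extra group $N(4\setminus 1_0)$ in $FK'$. To land in the Cuntz--Krieger range it then adds further representable summands $R_{24}\oplus R_{34}\oplus\Sigma R_4$ to match the rank conditions of Theorem~\ref{thm-range}, and transports the non-liftable automorphism along the resulting $\KK_{\mathcal{CSP}}$-equivalence via a direct-sum stability lemma. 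The key contrast: the paper builds the obstruction on top of an algebra that is already Cuntz--Krieger-like, whereas your plan tries to import the obstruction from objects that are intrinsically outside the Cuntz--Krieger range.
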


In \cite[Theorem~4.3]{arXiv:1309.7162v1} a range result is shown for $\RFK$ for both purely infinite graph algebras as well as for Cuntz-Krieger algebras (even in the unital case, as well). 
This characterization of the range is very useful, and is an important part in our proof of Theorem~\ref{thm-counterexamples2} (cf.\ Proposition~\ref{prop-non-lifting-of-automorphisms}), as well to many other applications of graph and Cuntz-Krieger algebras. 

\begin{theorem}[{\cite[Theorem~4.3]{arXiv:1309.7162v1}}]\label{thm-range}
Let $M$ be an exact $\Rcat$-module with $M(x_1)$ free for all $x$ in $X$. 
Then there exists a countable graph $E$ such that all vertices in $E$ are regular and support
at least two cycles, the \ca $C\sp*(E)$ is tight over $X$ and $\RFK(C\sp*(E))$ isomorphic to $M$. 
By construction $C\sp*(E)$ is purely infinite.
The graph $E$ can be chosen to be finite if (and only if) $M(x_1)$ and $M(\widetilde{x}_0)$ are
finitely generated, and the rank of $M(x_1)$ coincides with the rank of the cokernel
of $i\colon M(\widetilde{\partial}x_0)\rightarrow M(\widetilde{x}_0)$, for all $x$ in $X$. 
If $E$ is chosen finite, then by construction $C\sp*(E)$ is a Cuntz–Krieger algebra.
\end{theorem}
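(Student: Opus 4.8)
The plan is to construct the graph $E$ explicitly in layers indexed by the points of $X$, and then to read off $\RFK(C^*(E))$ from the standard description of the $K$-theory of a graph algebra and its gauge-invariant subquotients in terms of the vertex matrix. First fix a linear extension $x^{(1)},\dots,x^{(n)}$ of the specialization order on $X$ (smallest first), so that $U_k:=\widetilde{\{x^{(1)}\}}\cup\cdots\cup\widetilde{\{x^{(k)}\}}=\{x^{(1)},\dots,x^{(k)}\}$ is open and $\widetilde{\partial}x^{(k)}\subseteq U_{k-1}$. One then builds $E$ as an increasing union $E_1\subseteq\cdots\subseteq E_n=E$: at stage $k$ one adjoins a new block $V_{x^{(k)}}$ of vertices, edges inside $V_{x^{(k)}}$, and edges from $V_{x^{(k)}}$ into the already-built vertices corresponding to $\widetilde{\partial}x^{(k)}$. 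Each block $V_{x^{(k)}}$ is taken strongly connected (cofinal in the infinite case) with every vertex supporting at least two cycles, so that the subquotient $\A(x^{(k)})=C^*(E)(U_k)/C^*(E)(U_{k-1})$ is the simple Kirchberg algebra of the $V_{x^{(k)}}$-subgraph and so that $W_k:=\bigsqcup_{j\le k}V_{x^{(j)}}$ is saturated hereditary in $E$.

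At the base stages — the points $x$ with $\widetilde{\partial}x=\emptyset$, so that exactness of $M$ forces $M(\widetilde{\partial}x_0)=0$ — one needs $\A(x)$ simple Kirchberg with $K_0=M(\widetilde{x}_0)$ and $K_1=M(x_1)$; this is the classical realization of a countable abelian group as a cokernel and a free abelian group as a kernel of $I-B^t$ for a nonnegative matrix $B$ in which every vertex supports two cycles, and $B$ can be taken finite precisely when $M(\widetilde{x}_0)$ is finitely generated and $\operatorname{rank}M(x_1)=\operatorname{rank}M(\widetilde{x}_0)$. At a later stage for non-open $x$, the six-term sequence of $\A(\widetilde{\partial}x)\into\A(\widetilde{\{x\}})\onto\A(x)$ forces $K_1(\A(x))=M(x_1)$ and $K_0(\A(x))=\cok\big(i\colon M(\widetilde{\partial}x_0)\to M(\widetilde{x}_0)\big)$; one realizes this block as before (finite exactly under the finiteness and rank hypotheses of the theorem, now applied to $\cok i$), and then chooses the downward edges from $V_x$ into the $\widetilde{\partial}x$-part so that the resulting connecting map $K_1(\A(x))\to K_0(\A(\widetilde{\partial}x))$ equals the prescribed morphism $\delta_{x_1}^{\widetilde{\partial}x_0}$, with image $\ker i$. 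The surjectivity of $\bigoplus_{y\to x}M(\widetilde{y}_0)\to M(\widetilde{\partial}x_0)$ and the $DP(x)$-exactness of $M$ are exactly what make such a choice possible and consistent with the part of $E$ already built, and force $M(\widetilde{\partial}x_0)$ and $M(\widetilde{x}_0)$ to come out correctly.

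Once $E$ has been assembled one verifies: (i) every vertex is regular and supports at least two cycles, which is immediate since adjoining out-edges to lower layers only creates more return paths; (ii) $C^*(E)$ is tight over $X$, by checking inductively that the saturated hereditary subsets of $E^0$ are exactly the $\bigcup_{x\in U}V_x$ for $U\in\Op(X)$, which identifies $\Prim C^*(E)$ with $X$ and uses strong connectedness of the blocks together with the downward direction of the inter-layer edges; (iii) $\RFK(C^*(E))\cong M$, by computing the $K$-groups of all the relevant ideals and subquotients from the vertex-matrix formulas and the six-term sequences and matching them, together with the structure maps, stage by stage with $M$; (iv) $C^*(E)$ is purely infinite, since every vertex supporting two cycles gives condition (K) together with ``every vertex connects to a cycle'', which is known to imply pure infiniteness. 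In the finitely generated case all blocks $V_x$ are finite and only finitely many inter-layer edges are used, so $E$ is a finite graph with no sinks and all vertices regular, whence $C^*(E)$ is a \CK{}; conversely a finite graph forces all $K$-groups finitely generated, and the rank condition is then necessary because $\A(x)$ is again a finite-graph algebra, so $\operatorname{rank}K_1(\A(x))=\operatorname{rank}K_0(\A(x))=\operatorname{rank}\cok i$.

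The main obstacle is the inductive gluing step: arranging the inter-layer edges so that the connecting maps in $\RFK$ are exactly the prescribed $\Rcat$-morphisms while simultaneously respecting all relations of $\Rcat$ — in particular the equality of composites along different paths, which is precisely the information packaged in the $DP(x)$-term of exactness — and, in the finite case, checking that the rank bookkeeping closes up at each stage. The remaining ingredients are the classical realization of abelian groups by cokernels of nonnegative integer matrices and routine graph-algebra $K$-theory computations.
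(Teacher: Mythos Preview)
This theorem is not proved in the present paper; it is quoted from \cite[Theorem~4.3]{arXiv:1309.7162v1} and invoked as a black box (in the proof of Proposition~\ref{prop-non-lifting-of-automorphisms} and in the corollary about phantom Cuntz--Krieger algebras). There is thus no proof here against which to compare your proposal.

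That said, your outline matches in spirit the construction carried out in the cited source: one does assemble $E$ in blocks $V_x$ indexed by the points of $X$, with each block strongly connected and realizing the simple subquotient $\A(x)$, and one chooses the downward inter-block edges so as to realize the prescribed maps $\delta_{x_1}^{\widetilde{\partial}x_0}$. What you have written, however, is a plan rather than a proof. The step you yourself flag as the main obstacle --- arranging the inter-layer edges so that the resulting $\Rcat$-module structure on $\RFK(C^*(E))$ agrees with $M$, and in particular so that the path-independence relations in $\Rcat$ are satisfied --- is where all the work lies, and you have only asserted that the $DP(x)$-exactness hypothesis is ``exactly what makes such a choice possible''. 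Likewise, the claim that $K_0(\A(\widetilde{\partial}x))$ and $K_0(\A(\widetilde{\{x\}}))$ come out isomorphic to $M(\widetilde{\partial}x_0)$ and $M(\widetilde{x}_0)$, with the correct map between them, is not automatic from having fixed the boundary map and the simple subquotient; one must verify that the extension one has built has the right $K_0$, not merely the right six-term sequence shape. Finally, the base-case realization you invoke (prescribing both $\cok(I-B^t)$ and $\ker(I-B^t)$ with the stated finiteness/rank characterization) is itself a nontrivial input that needs a precise citation or argument. So the strategy is correct, but the proposal as written leaves the substantive parts undone.
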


In \cite{arklint:gjogv}, a phantom Cuntz-Krieger algebra is defined as a Cuntz-Krieger algebra that looks like a purely infinite Cuntz-Krieger algebra but is not isomorphic to a Cuntz-Krieger algebra. Because all the above classification results are external (as opposed to e.g.\ \cite{MR2270572}), we have the following. 

\begin{corollary}
There are no phantom Cuntz-Krieger algebras with four or fewer components. 
\end{corollary}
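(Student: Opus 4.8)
The plan is to show that a phantom \CK{} with at most four components would, contrary to its defining property, be isomorphic to an honest \CK{}: the idea is to manufacture a \CK{} with the same \rcirkt and then quote the classification theorem. So let $A$ be a phantom \CK{} with at most four components and put $X=\Prim A$, a finite $T_0$-space with at most four points. By the definition of a phantom in \cite{arklint:gjogv}, $A$ is a \KXa{} of real rank zero in $\mathcal{B}(X)$ --- unital, say, the stable case being completely analogous --- and $\RFK(A)$ looks like the \rcirkt of a purely infinite \CK{}: it is an exact $\Rcat$-module, $\K_1(A(x))$ is finitely generated and free for all $x\in X$, the rank of $\K_1(A(x))$ equals the rank of $\cok\bigl(\K_0(A(\widetilde{\partial}x))\to\K_0(A(\widetilde{\{x\}}))\bigr)$ for every $x$, and a compatible class of the unit is specified.

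First I would feed $M=\RFK(A)$ into the range result, Theorem~\ref{thm-range}. Since $M$ is exact with $M(x_1)$ finitely generated free and the rank condition holds, this produces a finite graph $E$, all of whose vertices are regular and support at least two cycles, such that $B:=C^*(E)$ is tight over $X$, purely infinite, a \CK{}, and $\RFK(B)\cong M=\RFK(A)$; using the unital version of the range statement from \cite{arXiv:1309.7162v1} one moreover arranges that this isomorphism respects the classes of the units (in the stable case one passes to the stabilization of $B$). If one prefers a definition of ``phantom'' that directly provides a purely infinite \CK{} $B$ with $\FK(A)\cong\FK(B)$, one simply takes the induced isomorphism of \rcirkt and omits this step.

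Next I would invoke the classification theorem. Both $A$ and $B$ are \KXas{} of real rank zero in $\mathcal{B}(X)$, $X$ has at most four points, and $\K_1(A(x))$ is free for all non-open $x\in X$ --- indeed $\K_1$ of every subquotient of a \CK{} is free and $\RFK(A)\cong\RFK(B)$ --- so Theorem~\ref{thm-redFK}(2) (or part~(1) in the stable case) applies, and the chosen isomorphism $\RFK(A)\to\RFK(B)$, preserving the class of the unit, lifts to an $X$-equivariant isomorphism $A\to B$. Hence $A$ is isomorphic to a \CK{}, contradicting that $A$ is a phantom, and the corollary follows.

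Two remarks on where the weight of the argument sits. Whenever $X\not\cong\CSP$, one could run the same argument with the full invariant $\FK$ and Theorem~\ref{thm-fullFK}; over $\CSP$, however, $\FK$ fails to be a strong classification functor even under a freeness hypothesis on $\K_1$ (Theorem~\ref{thm-counterexamples2}), so one is forced through the reduced invariant --- which does no harm here, precisely because a \CK{} always has free $\K_1$ and hence falls within the hypotheses of Theorem~\ref{thm-redFK} over every $T_0$-space with at most four points. The one genuinely delicate point, and the step I would expect to need care, is the bookkeeping with the class of the unit: $\K_0(A(X))$ is not literally part of $\RFK(A)$, so one must invoke the exact sequence of \cite[Lemma~5.2]{arXiv:1309.7162v1}, valid for real rank zero algebras, exhibiting $\K_0(A(X))$ as a canonical quotient of $\bigoplus_{x\in X}\K_0(A(\widetilde{\{x\}}))$, use it to see that an isomorphism of \rcirkt induces a well-defined isomorphism on $\K_0$ of the algebras, and then verify that the \CK{} model produced above can be arranged so that this isomorphism sends $[1_A]$ to $[1_B]$. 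Everything else is a direct appeal to the cited range and classification theorems.
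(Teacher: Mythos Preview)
Your proposal is correct and is precisely the argument the paper has in mind. The paper does not give a formal proof of this corollary at all: it simply remarks, immediately before the statement, that ``all the above classification results are external (as opposed to e.g.\ \cite{MR2270572})'' and declares the corollary to follow. Your write-up spells out exactly what ``external'' buys here --- namely that one can feed the invariant of a would-be phantom into the range result (Theorem~\ref{thm-range}) to produce a genuine Cuntz--Krieger model and then invoke Theorem~\ref{thm-redFK} to conclude the two algebras are isomorphic --- so there is no difference in approach, only in level of detail. Your observation that one must work with $\RFK$ rather than $\FK$ because of the $\mathcal{CSP}$ obstruction, and your care with the class of the unit via \cite[Lemma~5.2]{arXiv:1309.7162v1}, are both appropriate and consistent with the paper's setup.
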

%\noindent where $\FK$ denotes the functor filtrated $\K$-theory which will be defined shortly.

%Theorem Strong classification of PI RRZ X-algebras for X with at most 4 points NOT the diamond using FK (up to unital and stable). 

%Theorem Strong classification of PI RRZ X-algebras with free $\K_1$ for X with at most 4 points using RFK (up to unital and stable). 

%Theorem Counterexamples. HOLDS for accordion, does NOT hold for other without rrz. 
%For the diamond it fails EVEN with rrz. 
%And actually lifting of isomorphisms fails for the diamond EVEN with rrz and free $\K_1$ - although it holds for reduced. 

%Theorem Range result.

%Corollary Strong classification of PI Graph and CK-algebras with at most 4 components using RFK (up to unital and stable). 

%Corollary No phantom

This completes in a very satisfactory way the picture of the classification for graph algebras and for Cuntz-Krieger algebras with at most $4$ primitive ideals, and quite clearly points out that the correct invariant to use is not \FK but rather \RFK. The general question about (strong) classification of purely infinite graph algebras is still open as well as the general question about strong classification of purely infinite Cuntz-Krieger algebras\footnote{There is some recent work in progress by different persons in different constellations including: Bentmann, Carlsen, Meyer, Restorff, Ruiz}.
%Thus th Answers most questions from crelle. Raises new questions for graph algebras and/or CK-algebras. 

\section{Counter example for $\mathcal{CSP}$}
In this section, we will consider the space $\mathcal{CSP}$.  

The space $\mathcal{CSP}$ is considered in \cite[Section~6.2]{bentmann:master}, and the modules over $\NT{}$ (and thus also $\FKD(\A)$) is illustrated by the diagram
$$\xymatrix{
& 12 \ar[dr]^(.75)r\ar[r]|\circ^{\delta} & 34\ar[dr]^i && 2\ar[dr]^i \\ 
123\ar[ur]^r\ar[dr]_r\ar[r]|\circ^\delta & 4\ar[ur]|!{[u];[r]}\hole_(.75){i} \ar[dr]|!{[d];[r]}\hole^(.75){i} & 1\ar[r]|-\circ^-{\delta} & 234\ar[ur]^r\ar[r]^i\ar[dr]_r & 1234\ar[r]^r & 123. \\
& 13\ar[ur]_(.75)r\ar[r]|\circ_\delta & 24\ar[ur]_i & & 3\ar[ur]_i
}$$
For notational convenience, we will use subscripts to denote where a morphism starts and superscript to denote where it ends --- e.g., for a module $M$, $r_{12_0}^{1_0}$ is the homomorphism going from $M(12_0)$ to $M(1_0)$. 

Also R.~Bentmann proves in \cite[Section~6.2]{bentmann:master} a Universal Coefficient Theorem for a refined invariant $FK'$, and the $\NT{}'$-modules (and thereby also $FK'(\A)$) can be illustrated by the diagram 
$$\xymatrix{
& 12 \ar[dr]|\circ & & 34\ar[dr]^i && 2\ar[dr]^i \\ 
123\ar[ur]^r\ar[dr]_r\ar[r]|\circ^\delta & 4\ar[r] & 4\setminus 1\ar[r]|-{\circ}\ar[ur]\ar[dr] & 1\ar[r]|-\circ^-{\delta} & 234\ar[ur]^r\ar[r]^i\ar[dr]_r & 1234\ar[r]^r & 123. \\
& 13\ar[ur]|\circ & & 24\ar[ur]_i & & 3\ar[ur]_i
}$$
This implies with a result of E.~Kirchberg (\cite{MR1796912}), that every isomorphism between the refined invariants of \KDas in the bootstrap class $\mathcal{B}( \mathcal{CSP} )$ can be lifted to an $\mathcal{CSP}$-equivariant isomorphism.

In \cite[Section~3.10]{bentmann:gjogv}, R.~Bentmann constructs a Cuntz-Krieger algebra $\Ocal_A$ that is tight over $\mathcal{CSP}$ such that the projective dimension of $\FKD(\Ocal_A)$ is $2$ (as an $\NT{}$-module). 
The matrix $A$ is given by
$$A=
\begin{pmatrix}
(3) & 0 & 0 & 0 \\ 
(2) & (3) & 0 & 0 \\
(2) & 0 & (3) & 0 \\
\begin{pmatrix}2 \\ 0\end{pmatrix} & \begin{pmatrix}1 \\ 0\end{pmatrix} & \begin{pmatrix}1 \\ 0\end{pmatrix} & \begin{pmatrix}2 & 1 \\ 1 & 2\end{pmatrix}
\end{pmatrix}.$$

Now we can compute the \K-theory of the Cuntz-Krieger algebra $\Ocal_A$, as described in \cite{MR2270572}. 
We let $M=\FKD(\Ocal_A)$, and get 
$$\def\arraystretch{1.0}\arraycolsep=1.4pt
\begin{array}{rlrlrlrl}
M(1_0)&=\Z,& M(2_0)&=\Z_2,& M(3_0)&=\Z_2,& M(4_0)&=\Z_2,\\
M(12_0)&=\Z,&  M(13_0)&=\Z,&  M(24_0)&=\Z_2^2,& M(34_0)&=\Z_2^2,\\
M(123_0)&=\Z_2\oplus\Z,&\quad M(234_0)&=\Z_2^3,&\quad M(1234_0)&=\Z_2^2\oplus\Z,\\
M(1_1)&=\Z,& M(2_1)&=0,& M(3_1)&=0,& M(4_1)&=0,\\
M(12_1)&=\Z,&  M(13_1)&=\Z,&  M(24_1)&=0,& M(34_1)&=0,\\
M(123_1)&=\Z,&  M(234_1)&=0,& M(1234_1)&=\Z.
\end{array}$$

%We get 
%$$M(1_0)=\Z,\quad M(2_0)=\Z_2,\quad M(3_0)=\Z_2,\quad M(4_0)=\Z_2,$$
%$$M(12_0)=\Z,\quad  M(13_0)=\Z,\quad  M(24_0)=\Z_2\oplus\Z_2,\quad M(34_0)=\Z_2\oplus\Z_2,$$
%$$M(123_0)=\Z_2\oplus\Z,\quad  M(234_0)=\Z_2\oplus\Z_2\oplus\Z_2,\quad M(1234_0)=\Z_2\oplus\Z_2\oplus\Z,$$
%$$M(1_1)=\Z,\quad M(2_1)=0,\quad M(3_1)=0,\quad M(4_1)=0,$$
%$$M(12_1)=\Z,\quad  M(13_1)=\Z,\quad  M(24_1)=0,\quad M(34_1)=0,$$
%$$M(123_1)=\Z,\quad  M(234_1)=0,\quad M(1234_1)=\Z.$$

\begin{lemma}\label{nonliftkirchberg}
There exists an automorphism of $N = \FKD( \Sigma \Rcal_{234} \oplus  \Ocal_{A} )$ that does not lift to an automorphism of $FK'( \Sigma \Rcal_{234} \oplus  \Ocal_{A} )$.  Moreover, we may replace $\Sigma \Rcal_{234} \oplus  \Ocal_{A}$ with a \KDa \A in the bootstrap class $\mathcal{B} ( \mathcal{CSP} )$.
%The module $N=P\oplus M=SP_{234}\oplus\FKD(\Ocal_A)$ has an automorphism that does not lift to its refinement. NO reformulate it using \cas.
\end{lemma}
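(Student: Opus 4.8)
The plan is to play Bentmann's universal coefficient theorem for $FK'$ over $\mathcal{CSP}$ \cite[Section~6.2]{bentmann:master} --- under which every automorphism of $FK'$ of an algebra in $\mathcal{B}(\mathcal{CSP})$ lifts to an invertible ideal-related $\KK$-element --- against the fact that $\FKD(\Ocal_A)$ has projective dimension two over $\NT$ \cite[Section~3.10]{bentmann:gjogv}; the extra summand $\Sigma\Rcal_{234}$ serves only to upgrade the resulting non-liftable \emph{morphism} to a non-liftable \emph{automorphism}. Write $D=\Sigma\Rcal_{234}\oplus\Ocal_A$, $M=\FKD(\Ocal_A)$ and $SP_{234}=\FKD(\Sigma\Rcal_{234})\cong\NT\bar e_{234}$, so that $N=\FKD(D)\cong SP_{234}\oplus M$ by additivity of $\FKD$. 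From the computed table one reads $\Hom_{\NT}(SP_{234},M)=\bar e_{234}M=M(234_1)=0$, so every endomorphism of $N$ is block upper triangular; in particular $\alpha_\psi=\left(\begin{smallmatrix}\id&\psi\\0&\id\end{smallmatrix}\right)$ is an automorphism of $N$ (with inverse $\alpha_{-\psi}$) for each $\psi\in\Hom_{\NT}(M,SP_{234})$.

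First I would reduce the statement to a lifting question for $\psi$. Suppose $\alpha_\psi$ lifts to an automorphism $\beta$ of $FK'(D)$. By Bentmann's $FK'$-UCT together with the standard mapping cone argument, $\beta$ lifts to an invertible $\kappa\in\KK_{\mathcal{CSP}}(D,D)$, and since $\FKD$ factors through $FK'$ we have $\FKD(\kappa)=\alpha_\psi$; comparing matrix entries, the component of $\kappa$ in $\KK_{\mathcal{CSP}}(\Ocal_A,\Sigma\Rcal_{234})$ is a lift of $\psi$. Conversely, if $\tilde\psi\in\KK_{\mathcal{CSP}}(\Ocal_A,\Sigma\Rcal_{234})$ lifts $\psi$, then the image under $FK'$ of the invertible element $\left(\begin{smallmatrix}\id&\tilde\psi\\0&\id\end{smallmatrix}\right)\in\KK_{\mathcal{CSP}}(D,D)$ is an automorphism of $FK'(D)$ lifting $\alpha_\psi$. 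Hence it suffices to exhibit a single $\psi\in\Hom_{\NT}(M,SP_{234})$ that does \emph{not} lift to $\KK_{\mathcal{CSP}}(\Ocal_A,\Sigma\Rcal_{234})$.

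Here the projective dimension of $M$ is decisive. In the Meyer--Nest universal coefficient spectral sequence \cite{MR2953205} computing $\KK_{\mathcal{CSP}}(\Ocal_A,\Sigma\Rcal_{234})$ only three columns survive, since $\pdim_{\NT}M=2$, so the cokernel of $\KK_{\mathcal{CSP}}(\Ocal_A,\Sigma\Rcal_{234})\to\Hom_{\NT}(M,SP_{234})$ is identified through the differential $d_2$ with a subgroup of $\Ext^2_{\NT}(M,P_{234})$ (using $S\,SP_{234}\cong P_{234}$), and $\psi$ lifts precisely when $d_2(\psi)=0$. I would then use Bentmann's explicit length-two projective resolution $0\to Q_2\to Q_1\to Q_0\to M\to0$ of $M$: reading off $Q_2$, the projective $P_{234}$ occurs in it non-split, so $\Ext^2_{\NT}(M,P_{234})=\cok\bigl(\Hom_{\NT}(Q_1,P_{234})\to\Hom_{\NT}(Q_2,P_{234})\bigr)$ is nonzero, and chasing a generator back through the resolution and the spectral sequence produces a concrete $\psi$, hitting one of the $\Z_2$-summands of $M$ in degree $0$, with $d_2(\psi)\neq0$. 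Carrying out this chase --- identifying $d_2$ on the explicit resolution and certifying that the class it lands on is nonzero in $\Ext^2_{\NT}(M,P_{234})$ --- is the main obstacle. An alternative that sidesteps the spectral sequence is to compute $FK'(D)$ by hand, with its refined object ``$4\setminus1$'', and to check directly that no automorphism of the refined group can accompany $\alpha_\psi$.

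For the last sentence, $\KK_{\mathcal{CSP}}$, $FK'$ and $\FKD$ depend only on the $\KK_{\mathcal{CSP}}$-equivalence class of a \ca in $\mathcal{B}(\mathcal{CSP})$, and both $\alpha_\psi$ and the assertion that it fails to lift to an automorphism of $FK'(D)$ transport along any $\KK_{\mathcal{CSP}}$-equivalence. By a standard realization argument --- the range statement accompanying the universal coefficient theorem for $FK'$ over $\mathcal{CSP}$ \cite[Section~6.2]{bentmann:master}, or realization of the associated reduced invariant by a graph algebra, cf.\ Theorem~\ref{thm-range} --- there is a Kirchberg $\mathcal{CSP}$-algebra $\A$ in $\mathcal{B}(\mathcal{CSP})$ with $FK'(\A)\cong FK'(D)$, and hence $\FKD(\A)\cong N$; transporting $\alpha_\psi$ along this isomorphism gives an automorphism of $\FKD(\A)$ that does not lift to an automorphism of $FK'(\A)$.
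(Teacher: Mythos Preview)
Your high-level strategy is correct and in fact coincides with the paper's: the paper's explicit automorphism $\alpha$ is precisely of your form $\alpha_\psi$ for a specific $\psi\in\Hom_{\NT}(M,SP_{234})$. Your observation that $\Hom_{\NT}(SP_{234},M)=\bar e_{234}M=M(234_1)=0$ forces every endomorphism of $N$ to be block upper triangular is a clean conceptual explanation of why such an $\alpha_\psi$ is automatically an automorphism, something the paper verifies only by direct matrix computation.

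The genuine gap is at the step you yourself flag as ``the main obstacle'': you do not actually produce a non-lifting $\psi$. Your spectral-sequence sketch needs two ingredients, neither of which you establish. First, the claim that $\Ext^2_{\NT}(M,P_{234})\neq 0$ is asserted from the shape of Bentmann's resolution but not checked; the relevant condition is that $\Hom_{\NT}(Q_1,P_{234})\to\Hom_{\NT}(Q_2,P_{234})$ fail to be surjective, which is not the same as $P_{234}$ merely occurring in $Q_2$. Second, and more seriously, nonvanishing of $\Ext^2_{\NT}(M,P_{234})$ does not by itself imply that the differential $d_2\colon\Hom_{\NT}(M,SP_{234})\to\Ext^2_{\NT}(M,P_{234})$ is nonzero; you need to identify $d_2$ and exhibit an element not in its kernel, and you give no argument for this. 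So as written the proof is incomplete.

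The paper takes exactly the ``alternative'' you mention but do not carry out: it writes down $\alpha=\alpha_\psi$ explicitly (identity in degree~$0$, specific unipotent matrices on $N(1234_1)$, $N(123_1)$, $N(12_1)$, $N(13_1)$), checks by hand all the commutation relations in diagram~(\ref{eq-diagram-pseudocircle}), then computes the refined group $N((4\setminus 1)_0)\cong\Z\oplus\Z\oplus\Z_2$ and the map $N(12_1)\to N((4\setminus 1)_0)$ from two six-term exact sequences. The obstruction to extending $\alpha$ to $FK'$ becomes the concrete equation $B\begin{smallpmatrix}0\\2\\0\end{smallpmatrix}=\begin{smallpmatrix}1\\2\\0\end{smallpmatrix}$, which has no integer solution. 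Your final sentence on replacing $D$ by a Kirchberg $\mathcal{CSP}$-algebra via \cite[Theorem~5.3]{MR2545613} matches the paper and is fine.
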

\begin{proof}
We will now consider the projective module $P=SP_{234}$ from \cite[Section~6.2]{bentmann:master}. The individual groups are
$$\def\arraystretch{1.0}\arraycolsep=1.4pt
\begin{array}{rlrlrlrl}
P(1_0)&=0,& P(2_0)&=0,& P(3_0)&=0,& P(4_0)&=\Z, \\
P(12_0)&=0,& P(13_0)&=0,& P(24_0)&=0,& P(34_0)&=0, \\
P(123_0)&=0,& P(234_0)&=0,& P(1234_0)&=0, \\
P(1_1)&=0,& P(2_1)&=\Z,& P(3_1)&=\Z,& P(4_1)&=0, \\
P(12_1)&=\Z,& P(13_1)&=\Z,& P(24_1)&=0,& P(34_1)&=0,\\
P(123_1)&=\Z\oplus\Z,& \quad P(234_1)&=\Z,&\quad P(1234_1)&=\Z.
\end{array}$$

%It is easy to see from the exact sequence $M(4_1)\into M(1234_1)\to M(123_1)\to M(4_0)\to M(1234_0)\onto M(123_1)$,
%that we can choose the generators of $M(123_1)$ such that the homomorphism $r_{1234_1}^{123_1}$ is multiplication by $1$. 
%Also we can choose the generators of $P(1234_1)$ and $P(123_1)$ such that  the homomorphism $r_{1234_1}^{123_1}$ is multiplication by $\begin{pmatrix}1 \\ 1\end{pmatrix}$.

For $M$, we can choose the generators of the groups in such a way that: 
$$r_{1234_1}^{123_1}=(1), \quad \delta_{123_1}^{4_0}=(0), \quad r_{123_1}^{12_1}=(1),\quad r_{123_1}^{13_1}=(1).$$

%I have chosen generators in 123_1, 12_1 and 13_1

For $P$, we can choose the generators of the groups in such a way that: 
$$r_{1234_1}^{123_1}=\begin{pmatrix}1 \\ 1\end{pmatrix},\quad  
\delta_{123_1}^{4_0}=\begin{pmatrix}-1 & 1\end{pmatrix},\quad 
r_{123_1}^{12_1}=\begin{pmatrix}1 & 0\end{pmatrix},\quad
r_{123_1}^{13_1}=\begin{pmatrix}0 & -1\end{pmatrix}.$$

Now, we let $N=P\oplus M$, and define a family of automorphisms $$\left(\alpha_{i,Y}\colon N(Y_i)\rightarrow N(Y_i)\right)_{i=0,1,Y\in\LC( \mathcal{CSP} )^*}$$ as follows. 
For all $Y\in\LC( \mathcal{CSP} )^*$, we let $\alpha_{0,Y}$ be the identity. 
For all $Y\in\{1,2,3,4,24,34,234\}$, we let $\alpha_{1,Y}$ be the identity.
Moreover, we let
\begin{align*}
\alpha_{1,1234}\colon& \Z\oplus\Z\rightarrow \Z\oplus\Z \text{ be given by the matrix }
\begin{pmatrix}1 & 1 \\ 0 & 1\end{pmatrix}, \\
\alpha_{1,123}\colon& \Z\oplus\Z\oplus\Z\rightarrow \Z\oplus\Z\oplus\Z \text{ be given by the matrix }
\begin{pmatrix}1 & 0 & 1 \\ 0 & 1 & 1 \\ 0 & 0 & 1\end{pmatrix}, \\
\alpha_{1,12}\colon& \Z\oplus\Z\rightarrow \Z\oplus\Z \text{ be given by the matrix }
\begin{pmatrix}1 & 1 \\ 0 & 1\end{pmatrix}, \\
\alpha_{1,13}\colon& \Z\oplus\Z\rightarrow \Z\oplus\Z \text{ be given by the matrix }
\begin{pmatrix}1 & -1 \\ 0 & 1\end{pmatrix}.
\end{align*}

Now we want to show that $\alpha$ is in fact an automorphism of the \cirkt. 
Commutativity of any of the squares not including any of the groups $N(12_1), N(13_1), N(123_1), N(1234_1)$ is clear, since there $\alpha$ acts as the identity. 
The equations 
\begin{align*}
\alpha_{1,1234}\circ i_{234_1}^{1234_1}&=i_{234_1}^{1234_1}\circ\alpha_{1,234}, \\
\alpha_{1,123}\circ i_{2_1}^{123_1}&=i_{2_1}^{123_1}\circ\alpha_{1,2}, \\
\alpha_{1,123}\circ i_{3_1}^{123_1}&=i_{3_1}^{123_1}\circ\alpha_{1,3}
\end{align*}
clearly hold, since the automorphisms act as the identity on the part coming from $P$. 
We have that $\alpha_{1,123}\circ r_{1234_1}^{123_1}=r_{1234_1}^{123_1}\circ\alpha_{1,1234}$, since 
$$\begin{pmatrix}1&0&1\\0&1&1\\0&0&1\end{pmatrix}\begin{pmatrix}1&0\\1&0\\0&1\end{pmatrix}=\begin{pmatrix}1&0\\1&0\\0&1\end{pmatrix}\begin{pmatrix}1&1\\0&1\end{pmatrix}.$$ 
We have that $\alpha_{0,4}\circ \delta_{123_1}^{4_0}=\delta_{123_1}^{4_0}\circ\alpha_{1,123}$, since 
$$\begin{pmatrix}1&0\\0&1\end{pmatrix}\begin{pmatrix}-1&1&0\\0&0&0\end{pmatrix}=\begin{pmatrix}-1&1&0\\0&0&0\end{pmatrix}\begin{pmatrix}1&0&1\\0&1&1\\0&0&1\end{pmatrix}.$$ 
We have that $\alpha_{1,12}\circ r_{123_1}^{12_1}=r_{123_1}^{12_1}\circ\alpha_{1,123}$, since 
$$\begin{pmatrix}1&1\\0&1\end{pmatrix}\begin{pmatrix}1&0&0\\0&0&1\end{pmatrix}=\begin{pmatrix}1&0&0\\0&0&1\end{pmatrix}\begin{pmatrix}1&0&1\\0&1&1\\0&0&1\end{pmatrix}.$$ 
We have that $\alpha_{1,13}\circ r_{123_1}^{13_1}=r_{123_1}^{13_1}\circ\alpha_{1,123}$, since 
$$\begin{pmatrix}1&-1\\0&1\end{pmatrix}\begin{pmatrix}0&-1&0\\0&0&1\end{pmatrix}=\begin{pmatrix}0&-1&0\\0&0&1\end{pmatrix}\begin{pmatrix}1&0&1\\0&1&1\\0&0&1\end{pmatrix}.$$ 

The equations 
$$\alpha_{0,34}\circ \delta_{12_1}^{34_0}=\delta_{12_1}^{34_0}\circ\alpha_{1,12},$$ 
$$\alpha_{0,24}\circ \delta_{13_1}^{24_0}=\delta_{12_1}^{24_0}\circ\alpha_{1,13},$$ 
$$\alpha_{1,1}\circ r_{12_1}^{1_1}=r_{12_1}^{1_1}\circ\alpha_{1,12},$$ 
$$\alpha_{1,1}\circ r_{13_1}^{1_1}=r_{13_1}^{1_1}\circ\alpha_{1,13},$$ 
clearly hold, since the cross terms in $\alpha_{1,12}$ and $\alpha_{1,13}$ get cancelled as the part coming from $P$ in $N(34_0)$, $N(24_0)$, and $N(1_1)$ is zero. This means that $\alpha$ is really an automorphism of the $\NT{}$-module $N=P\oplus M$. 

We know that $M=\FKD(\Ocal_A)$, $P=\FKD(\Sigma\Rcal_{234})$ and thus $N=\FKD(\Sigma\Rcal_{234}\oplus\Ocal_A)$, and thus these can be extended to modules over $\NT{}'$ as the images under the refined invariant, cf.\ \cite{bentmann:master}. With a slight misuse of notation, we also use $M$, $P$ and $N$ to denote these.
Then from \cite[Definition~6.2.10 and Lemma~6.2.11]{bentmann:master}, it follows that we have an exact sequence 
\begin{equation}
\label{eq-newgroupssequence1}
\vcenter{\xymatrix{
M(4_0)\ar[r]&M(4\setminus 1_0)\ar[r] & M(1_1)\ar[d] \\
M(1_0)\ar[u]&M(4\setminus 1_1)\ar[l] & M(4_1).\ar[l] 
}}
\end{equation}
Since the vertical homomorphisms from $M(1_i)$ to $M(4_{i})$ are the compositions of maps $M(1_i)\rightarrow M(234_{1-i})\rightarrow M(3_{1-i})\rightarrow M(123_{1-i})\rightarrow M(4_{i})$, it follows that these are both zero, and consequently, the sequence degenerates into two short exact sequences: $\Z_2\into M(4\setminus 1_0)\onto \Z$ and $0\into M(4\setminus 1_1)\onto \Z$. 
Therefore $M(4\setminus 1_0)=\Z\oplus\Z_2$ and $M(4\setminus 1_1)=\Z$. 
From \cite[Definition~6.2.10 and Lemma~6.2.11]{bentmann:master}, it also follows that we have an exact sequence 
\begin{equation}
\label{eq-newgroupssequence2}
\vcenter{\xymatrix{
M(12_1)\ar[r]&M(4\setminus 1_0)\ar[r] & M(24_0)\ar[d] \\
M(24_1)\ar[u]&M(4\setminus 1_1)\ar[l] & M(12_0).\ar[l] 
}}
\end{equation}
Since $M(24_1)=0$, $M(24_0)=\Z_2\oplus\Z_2$ and $M(12_0)=\Z$, the upper row is just a short exact sequence $\Z\into\Z\oplus\Z_2\onto\Z_2 \oplus \Z_2$. 
Therefore, we can choose the generators of $M(4\setminus 1_0)$ in such a way, that the homomorphism from $M(12_1)$ to $M(4\setminus 1_0)$ is given by the matrix $\begin{pmatrix}2\\0\end{pmatrix}$ (because if this homomorphism was of the form $\begin{pmatrix}a\\1\end{pmatrix}$ for some $a\in\Z\setminus\{0\}$ then the quotient was cyclic). 

The analogue sequence to \eqref{eq-newgroupssequence1} gives immediately, that $P(4\setminus 1_0)=\Z$ and $P(4\setminus 1_1)=0$. 
The analogue sequence to \eqref{eq-newgroupssequence2} gives immediately, that the homomorphism from $P(12_1)$ to $P(4\setminus 1_0)$ is an isomorphism---so we can choose the generators of $P(4\setminus 1_0)$ in such a way, that this isomorphism becomes multiplication by $1$. 

Since the refined invariant respects finite direct sums, we get $N(4\setminus 1_0)=\Z\oplus\Z\oplus\Z_2$ and $N(4\setminus 1_1)=0\oplus\Z$, and that the map from $N(12_1)$ to $N(4\setminus 1_0)$ is $\begin{pmatrix}1 & 0\\0&2\\0&0\end{pmatrix}$.

Now assume, that there is a homomorphism $\beta\colon N(4\setminus 1_0)\rightarrow N(4\setminus 1_0)$ that commutes with all the other maps. 
So $\beta$ is an endomorphism of $\Z\oplus\Z\oplus\Z_2$ given by an integer matrix $B$. 
If we look at the commutative diagram 
$$\xymatrix{\Z\oplus\Z\ar[r]^-{\begin{smallpmatrix}1&0\\0&2\\0&0\end{smallpmatrix}}\ar[d]_{\begin{smallpmatrix}1&1\\0&1\end{smallpmatrix}} & 
\Z\oplus\Z\oplus\Z_2\ar[d]^B \\ 
\Z\oplus\Z\ar[r]_-{\begin{smallpmatrix}1&0\\0&2\\0&0\end{smallpmatrix}} & 
\Z\oplus\Z\oplus\Z_2 }$$
we see that we need to have that 
\[
B\begin{pmatrix}0\\2\\0\end{pmatrix}
=B\begin{pmatrix}1&0\\0&2\\0&0\end{pmatrix}\begin{pmatrix}0\\1\end{pmatrix}
=\begin{pmatrix}1&0\\0&2\\0&0\end{pmatrix}\begin{pmatrix}1&1\\0&1\end{pmatrix}\begin{pmatrix}0\\1\end{pmatrix}
=\begin{pmatrix}1 \\ 2 \\ 0\end{pmatrix},
\]
which is a contradiction to $B$ being an integer matrix. Thus the above together with \cite[Theorem~5.3]{MR2545613}, proves the lemma.
\end{proof}

\begin{lemma}\label{directsum}
Let \A be a \ca over $\mathcal{CSP}$.  Suppose there exists an automorphism $\alpha$ of $\FKD ( \A )$ that does not extend to an automorphism of $FK'(\A)$.  Then for any \ca \B over $\CSP$, the automorphism $\alpha \oplus \mathrm{id}_{ \FKD ( \B ) }$ on $\FKD ( \A \oplus \B )$ does not extend to an automorphism of $FK'( \A \oplus \B )$.

Consequently, for any \ca $\cafont{C}$ over $\CSP$ which is $\KK_{\mathcal{CSP}}$-equivalent to $\A \oplus \B$, there exists an automorphism of $\FKD ( \cafont{C} )$ that does not extend to an automorphism of $FK'( \cafont{C} )$.
\end{lemma}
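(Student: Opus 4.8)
The plan is to deduce the first statement from the implication: \emph{if $\alpha\oplus\mathrm{id}_{\FKD(\B)}$ extends to an $\NT{}'$-module automorphism of $FK'(\A\oplus\B)$ — that is, an automorphism $\gamma$ with $U(\gamma)=\alpha\oplus\mathrm{id}_{\FKD(\B)}$ — then $\alpha$ itself extends to an $\NT{}'$-module automorphism of $FK'(\A)$}, which is excluded by hypothesis. Here $U$ denotes the natural forgetful functor from $\NT{}'$-modules to $\NT$-modules, so that $\FKD=U\circ FK'$ and $\FKD(\varphi)=U(FK'(\varphi))$ for every $\CSP$-equivariant \shom $\varphi$. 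The two structural inputs I will use are that $FK'$, like $\FKD$, respects finite direct sums (as already invoked in the proof of Lemma~\ref{nonliftkirchberg}), and that $FK'(\A)$ carries the same underlying groups as $\FKD(\A)$ together with the two extra groups $FK'(\A)(4\setminus1_0)$ and $FK'(\A)(4\setminus1_1)$, an automorphism of $\FKD(\A)$ \emph{extending} to $FK'(\A)$ meaning precisely that it is the $U$-image of an $\NT{}'$-automorphism of $FK'(\A)$.

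First I would make the direct-sum splitting explicit: applying $FK'$ to the canonical $\CSP$-equivariant inclusions $\iota_\A,\iota_\B$ and projections $\pi_\A,\pi_\B$ for $\A\oplus\B$ yields $\NT{}'$-morphisms with $FK'(\pi_\A)FK'(\iota_\A)=\mathrm{id}$, $FK'(\pi_\B)FK'(\iota_\B)=\mathrm{id}$ and $FK'(\iota_\A)FK'(\pi_\A)+FK'(\iota_\B)FK'(\pi_\B)=\mathrm{id}$, realising $FK'(\A\oplus\B)=FK'(\A)\oplus FK'(\B)$ compatibly (via $U$) with the analogous splitting of $\FKD(\A\oplus\B)$. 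Given an $\NT{}'$-automorphism $\gamma$ of $FK'(\A\oplus\B)$ with $U(\gamma)=\alpha\oplus\mathrm{id}$, I would set $\gamma':=FK'(\pi_\A)\,\gamma\,FK'(\iota_\A)$, an $\NT{}'$-endomorphism of $FK'(\A)$; since $\alpha\oplus\mathrm{id}$ respects the splitting of $\FKD(\A\oplus\B)$, applying $U$ gives $U(\gamma')=\pi_\A(\alpha\oplus\mathrm{id})\iota_\A=\alpha$.

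The remaining point is that $\gamma'$ is in fact an automorphism. On each of the groups shared with $\FKD(\A)$, $\gamma'$ coincides with the corresponding component of $U(\gamma')=\alpha$ and is therefore a group isomorphism. For the two exceptional groups $FK'(\A)(4\setminus1_0)$ and $FK'(\A)(4\setminus1_1)$ I would invoke the six-term exact sequences \eqref{eq-newgroupssequence1} and \eqref{eq-newgroupssequence2} for $FK'(\A)$ — i.e.\ \cite[Definition~6.2.10 and Lemma~6.2.11]{bentmann:master} — in which $FK'(\A)(4\setminus1_i)$ sits with its five neighbouring terms all values of $FK'(\A)$ on $\NT$-objects. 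As $\gamma'$ commutes with the morphisms occurring in these sequences and is an isomorphism on those five terms, the five lemma forces $\gamma'$ to be an isomorphism on $FK'(\A)(4\setminus1_i)$ as well. Hence $\gamma'$ is an $\NT{}'$-automorphism of $FK'(\A)$ with $U(\gamma')=\alpha$, the sought contradiction, which proves the first assertion.

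For the final claim, both $FK'$ and $\FKD$ factor through $\KK_{\mathcal{CSP}}$, so a $\KK_{\mathcal{CSP}}$-equivalence $\cafont{C}\sim\A\oplus\B$ induces an $\NT{}'$-module isomorphism $FK'(\cafont{C})\cong FK'(\A\oplus\B)$ whose image under $U$ is an isomorphism $\FKD(\cafont{C})\cong\FKD(\A\oplus\B)$; transporting $\alpha\oplus\mathrm{id}_{\FKD(\B)}$ along the latter gives an automorphism $\beta$ of $\FKD(\cafont{C})$, and were $\beta$ to extend to an $\NT{}'$-automorphism of $FK'(\cafont{C})$ then transporting back would extend $\alpha\oplus\mathrm{id}_{\FKD(\B)}$, contradicting the first part. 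I expect the only genuine obstacle to be the step just described — verifying that $\gamma'$ is an automorphism rather than merely an endomorphism lifting $\alpha$ — which is exactly where one uses the exactness of \eqref{eq-newgroupssequence1}--\eqref{eq-newgroupssequence2} together with the fact that all their terms other than $FK'(\A)(4\setminus1_i)$ are indexed by objects of $\NT$; the rest (additivity of the invariants, the idempotent splitting, and $\KK_{\mathcal{CSP}}$-functoriality) is formal.
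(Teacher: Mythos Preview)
Your proposal is correct and follows essentially the same strategy as the paper: both compress a putative $\NT{}'$-extension $\gamma$ of $\alpha\oplus\mathrm{id}$ via the inclusion and projection for the summand $\A$ to obtain an $\NT{}'$-endomorphism of $FK'(\A)$ lying over $\alpha$, and then invoke the Five Lemma to upgrade this to an automorphism, contradicting the hypothesis. Your treatment is in fact more explicit than the paper's at the Five Lemma step, since you identify the exact sequences \eqref{eq-newgroupssequence1}--\eqref{eq-newgroupssequence2} in which the exceptional groups $FK'(\A)(4\setminus 1_i)$ sit surrounded by $\NT$-indexed terms; the paper simply asserts the Five Lemma applies.
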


\begin{proof}
Let $\pi \colon  \A \oplus \B \rightarrow \A$ be the projection onto the 1st coordinate and let $\iota \colon \A \rightarrow \A \oplus \B$ be the inclusion into the 1st coordinate.  These $\mathcal{CSP}$-equivariant homomorphisms then induce surjective group homomorphisms $\lambda_{1} \colon \text{End}_{\NT{}'} ( FK'( \A \oplus \B ) ) \rightarrow \text{End}_{\NT{}'} ( FK'( \A  ) )$ and $\lambda_{2} \colon \text{End} ( \FKD( \A \oplus \B ) ) \rightarrow \text{End} ( \FKD( \A  ) )$.

Suppose there exists an endomorphism $\gamma$ of $FK'( \A \oplus \B )$ that extends $\alpha \oplus \mathrm{id}_{ \FKD ( \B ) }$.  Since there is a natural transformation from the functor $FK'$ to $\FKD$, we have group homomorphisms $\beta_{1} \colon \text{End}_{\NT{}'} ( FK'( \A \oplus \B ) ) \rightarrow \text{End} ( \FKD( \A \oplus \B ) )$ and $\beta_{2} \colon \text{End}_{\NT{}'} ( FK'( \A ) ) \rightarrow \text{End} ( \FKD( \A  ) )$ such that the diagram 
\begin{align*}
\xymatrix{
\text{End}_{\NT{}'} ( FK'( \A \oplus \B ) )  \ar[r]^-{ \lambda_{1} } \ar[d]_{ \beta_{1} } & \text{End}_{\NT{}'} ( FK'( \A ) ) \ar[d]^{ \beta_{2} } \\
\text{End} ( \FKD( \A \oplus \B ) ) \ar[r]_-{\lambda_{2} } & \text{End} ( \FKD( \A  ) )
}
\end{align*}
is commutative.  Note that $\beta_{1} ( \gamma ) =  \alpha \oplus \mathrm{id}_{ \FKD ( \B ) }$ and $\lambda_{2} ( \alpha \oplus \mathrm{id}_{ \FKD ( \B ) } ) = \alpha$.  By the commutativity of the diagram, we have that $\lambda_{1} ( \gamma )$ is an endomorphism of $FK'( \A )$ that extends $\alpha$.  Since $\alpha$ is an automorphism of $\FKD ( \A )$, by the Five Lemma, we have that $\lambda_{1} ( \gamma )$ is an automorphism of $FK'( \A )$.  Thus, $\lambda_{1} ( \gamma )$ is an automorphism of $FK'( \A )$ that extends $\alpha$ which is a contradiction.

For the last part of the lemma, if $y$ is invertible in $\KK_\mathcal{CSP}(\cafont{C} , \A \oplus \B )$, then $\FKD (y) ( \alpha \oplus \mathrm{id}_{ \FKD ( \B ) } ) \FKD (y)^{-1}$ is an automorphism of $\FKD ( \cafont{C} )$ that does not extend to an automorphism of $FK'( \cafont{C} )$.
\end{proof}

\begin{proposition}\label{prop-non-lifting-of-automorphisms}
There exists a Cuntz-Krieger algebra \A that is a tight \ca over $\mathcal{CSP}$ and for which there exists an automorphism of $\FKD ( \A )$ that does not lift to an automorphism of $\A \otimes \KO$.
\end{proposition}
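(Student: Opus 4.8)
The plan is to transplant the obstruction of Lemma~\ref{nonliftkirchberg} into a Cuntz--Krieger algebra, using the range result Theorem~\ref{thm-range} to build the algebra, the strong classification of real rank zero algebras over four-point spaces to control it, and then to pass from ``does not extend to $FK'$'' to ``does not lift to the algebra'' via stable invariance of $FK'$ together with Bentmann's universal coefficient theorem for $FK'$ on $\mathcal B(\CSP)$.

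First pass to the reduced invariant: put $N=\FKD(\Sigma\Rcal_{234}\oplus\Ocal_A)$ and let $M_N=\RFK(\Sigma\Rcal_{234}\oplus\Ocal_A)$ be its reduction. Since $\RFK$ is additive, $\Ocal_A$ has real rank zero (so $\RFK(\Ocal_A)$ is exact by \cite[Corollary~3.10]{arXiv:1309.7162v1}), and $\RFK(\Sigma\Rcal_{234})$ is exact by inspection of the small groups and maps of $SP_{234}$ recorded in the proof of Lemma~\ref{nonliftkirchberg}, the module $M_N$ is exact with $M_N(x_1)$ free and finitely generated for every $x\in\CSP$. It does \emph{not} satisfy the finite-graph rank condition of Theorem~\ref{thm-range}: the summand $\Sigma\Rcal_{234}$ contributes a free group in degree $x_1$ at the points $x=2,3$, where $\cok(i\colon M(\widetilde{\partial}x_0)\to M(\widetilde{x}_0))$ is torsion, and dually at the open point $x=4$. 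I would therefore enlarge the \ca by single-point real rank zero Kirchberg $\CSP$-algebras --- copies of $\Ocal_\infty$ concentrated at $2$ and at $3$, and a simple Kirchberg algebra with $(\K_0,\K_1)=(0,\Z)$ concentrated at $4$ --- so that $\A'=\Sigma\Rcal_{234}\oplus\Ocal_A\oplus\B$ has $\RFK(\A')$ exact with free finitely generated $\K_1$-groups and satisfying the rank condition at every point (ranks being additive over direct sums, one matches the deficits summand by summand). Then Theorem~\ref{thm-range} produces a Cuntz--Krieger algebra $\cafont{C}$, tight over $\CSP$, purely infinite, of real rank zero, with $\RFK(\cafont{C})\cong\RFK(\A')$ and with free $\K_1$.

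The core of the argument is that $\cafont{C}$ is $\KK_{\CSP}$-equivalent to $\A'$. For this one shows that $\A'$ is $\KK_{\CSP}$-equivalent to a real rank zero \KDa $\A''$ in $\mathcal B(\CSP)$ with free $\K_1$ --- equivalently, that $N$, and hence $\FKD(\A')$, is of \emph{real rank zero type} (the filtered $\K$-theory of some real rank zero Kirchberg $\CSP$-algebra). With this, $\RFK(\cafont{C})\cong\RFK(\A')=\RFK(\A'')$ and Theorem~\ref{thm-redFK} (both $\cafont{C}$ and $\A''$ being real rank zero Kirchberg $\CSP$-algebras with free $\K_1$) give $\cafont{C}\otimes\KO\cong\A''\otimes\KO$, so $\cafont{C}\sim_{\KK_{\CSP}}\A''\sim_{\KK_{\CSP}}\A'$. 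Now $\Sigma\Rcal_{234}\oplus\Ocal_A$ carries an automorphism of $\FKD$ not extending to an automorphism of $FK'$ by Lemma~\ref{nonliftkirchberg}, and applying Lemma~\ref{directsum} (with $\A=\Sigma\Rcal_{234}\oplus\Ocal_A$, with $\B$ the correcting summands, and with $\cafont{C}$) produces an automorphism $\theta$ of $\FKD(\cafont{C})$ that does not extend to an automorphism of $FK'(\cafont{C})$.

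Finally, $\theta$ does not lift to an automorphism of $\cafont{C}\otimes\KO$: if $\theta=\FKD(\varphi)$ for some $\varphi\in\operatorname{Aut}(\cafont{C}\otimes\KO)$, then, as $\cafont{C}\otimes\KO$ lies in $\mathcal B(\CSP)$, $FK'$ is invariant under stabilisation, and there is a natural transformation $FK'\Rightarrow\FKD$, the automorphism $FK'(\varphi)$ of $FK'(\cafont{C}\otimes\KO)\cong FK'(\cafont{C})$ would restrict to $\theta$ on $\FKD(\cafont{C})$ --- contradicting the previous step. Hence $\cafont{C}$ is as required. I expect the core step to be the real obstacle: over $\CSP$ the functor $\FKD$ has projective dimension $2$ and does not determine the $\KK_{\CSP}$-class --- precisely the source of the counterexamples --- so one cannot simply lift the isomorphism $\FKD(\cafont{C})\cong N$, but must establish real-rank-zero-type-ness of $N$, using that $\RFK(\Sigma\Rcal_{234})$ is exact and the reduction results of \cite{arXiv:1301.7223v2} comparing $\RFK$ and $\FKD$ for real rank zero \cas with free $\K_1$; this is also exactly what forces the auxiliary Kirchberg summands $\B$, chosen so that the finite-graph rank condition holds. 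Alternatively one can bypass the $\KK_{\CSP}$-equivalence by writing down the graph $E$ produced by Theorem~\ref{thm-range} explicitly, computing $FK'(C^*(E))$ directly, and rerunning the matrix argument of Lemma~\ref{nonliftkirchberg} verbatim on $C^*(E)$.
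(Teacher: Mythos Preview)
Your approach is essentially the paper's: enlarge $\Sigma\Rcal_{234}\oplus\Ocal_A$ by correcting summands so that the finite-graph rank condition of Theorem~\ref{thm-range} holds, realise the result as a Cuntz--Krieger algebra, establish a $\KK_{\CSP}$-equivalence via Theorem~\ref{thm-redFK}, and transfer the non-lifting automorphism via Lemma~\ref{directsum}. The paper uses $R_{24}\oplus R_{34}\oplus\Sigma R_4$ as correcting summands where you use point-concentrated simple Kirchberg algebras; both choices repair the rank deficits at $x=2,3,4$ equally well.

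Where you diverge is in your treatment of the ``core step.'' You frame it as needing to show that $\FKD(\A')$ is of real rank zero type and flag this as the real obstacle, invoking the projective-dimension-$2$ pathology over $\CSP$. But this concern is misplaced: one never needs to lift an $\FKD$-isomorphism. The paper simply invokes \cite[Theorem~5.3]{MR2545613} to produce a Kirchberg $\CSP$-algebra $\cafont{D}\in\mathcal B(\CSP)$ that is $\KK_{\CSP}$-equivalent to the enlarged sum $\cafont{C}$; then $\cafont{D}$ automatically has real rank zero because $\FKD(\cafont{D})=\FKD(\cafont{C})$ is \rrzero{} (for $SP_{234}$ every even-to-odd boundary vanishes since $P(1_0)=P(12_0)=P(13_0)=P(123_0)=0$, and your point-concentrated summands have trivial boundary maps). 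With $\cafont{D}$ now a real rank zero Kirchberg $\CSP$-algebra with free $\K_1$, Theorem~\ref{thm-redFK} applied to the $\RFK$-isomorphism (not an $\FKD$-isomorphism) between $\cafont{D}$ and the Cuntz--Krieger algebra gives stable isomorphism, hence $\KK_{\CSP}$-equivalence. So the step you identify as the obstacle is in fact routine once Meyer--Nest is cited; no delicate reduction-theoretic input from \cite{arXiv:1301.7223v2} is required beyond what is already packaged in Theorem~\ref{thm-redFK}.
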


\begin{proof}
By Lemma~\ref{nonliftkirchberg}, there exists an automorphism $\beta$ of $\FKD ( \Sigma R_{234} \oplus \Ocal_{A} ) = N$ that does not extend to an automorphism of $FK' (\Sigma R_{234} \oplus \Ocal_{A} )$.  Let $\cafont{C} = \Sigma R_{234} \oplus \mathcal{O}_{A} \oplus R_{24} \oplus R_{34} \oplus \Sigma R_{4}$.  A computation shows that $\mathrm{rank}( \K_{1} ( \cafont{C} ( x ) ) ) = \mathrm{rank}( \K_{0} ( \cafont{C} (x) ) )$ for all $x\in\mathcal{CSP} $.  Since $\cafont{C}$ is in the bootstrap category $\mathcal{B} ( \mathcal{CSP} )$, by \cite[Theorem~5.3]{MR2545613}, there exists a Kirchberg $\mathcal{CSP}$-algebra $\cafont{D}$ in $\mathcal{B}(\mathcal{CSP})$ such that $\cafont{D}$ is $\KK_{\mathcal{CSP}}$-equivalent to $\cafont{C}$.  By Theorem~\ref{thm-range}, % \cite{arXiv:1309.7162v1},
there exists a Cuntz-Krieger algebra \A that is a tight \ca over $\mathcal{CSP}$ such that $\RFKD ( \A ) \cong \RFKD( \cafont{D} )$.  By Theorem~\ref{thm-range}, %\cite{arXiv:1309.7162v1}, 
$\cafont{D} \otimes \KO \cong \A \otimes \KO$.  In particular, \A is $\KK_{\mathcal{CSP}}$-equivalent to $\cafont{C}$.  By Lemma~\ref{directsum}, there exists an automorphism $\alpha$ of $\FKD ( \A )$ that does not extend to an automorphism $FK'( \A )$.  This implies that $\alpha$ does not lift to an automorphism of $\A \otimes \KO$.
\end{proof}

\begin{remark}
By results in \cite{arXiv:1309.7162v1} and \cite{arXiv:1301.7223v2}, the \KDa \A appearing in Lemma~\ref{nonliftkirchberg} is unique up to stable isomorphism and can be chosen to be a graph \ca.
By the same results, the \CK{} \A appearing in Proposition~\ref{prop-non-lifting-of-automorphisms} is unique up to stable isomorphism and can be chosen to be represented by a $8\times 8$ matrix over the non-negative integers.
\end{remark}

\section{Preliminary homological algebra}
The following definition is taken from \cite[\S{}I.8]{MR1438546}. 

\begin{definition}
  \label{def-functorialiso1}
  Let $R$ be a non-trivial ring with identity. 
  Let $M$ be a right $R$-module and let $G$ be an abelian group. 
  Then we can equip the abelian group $\Hom_\Z(M,G)$ with a (left)
  $R$-module structure as follows: 
  $$(r\varphi)(x)=\varphi(x r),\quad 
  x\in M,r\in R,\varphi\in\Hom_\Z(M,G).$$
  It is an easy exercise to verify that this makes $\Hom_\Z(M,G)$ into
  a (left) $R$-module and $\Hom_\Z(-,-)$ into a bifunctor from the category of right $R$-modules and the category of abelian groups to the category of $R$-modules.
\end{definition}

\begin{definition}
  \label{def-functorialiso2}
  Let $R$ be a non-trivial ring with identity. 
  Let $M$ be an $R$-module and let $G$ be an abelian group. 
  Then $M\otimes_\Z G$ is clearly a $\Z$-module. 
  But for each $r\in R$ we can uniquely define a $\Z$-module homomorphism by 
  $$f_r\colon M\otimes_\Z G\ni x\otimes g\mapsto r x\otimes g\in M\otimes_\Z G.$$
  By uniqueness we see, that for all $r_1,r_2\in R$
  we have 
  $f_{r_1+r_2}=f_{r_1}+f_{r_2}$, 
  $f_{r_1r_2}=f_{r_1}\circ f_{r_2}$, 
  $f_1=\operatorname{id}_{M\otimes_\Z G}$. 
  Consequently, the left action of $R$ on $M\otimes_\Z G$ given
  by 
  $R\times(M\otimes_\Z G)\ni(r,x)\mapsto f_r(x)\in M\otimes_\Z G$
  makes $M\otimes_\Z G$ into an $R$-module and $-\otimes_\Z-$ into a bifunctor from the category of $R$-modules and the category of abelian groups to the category of $R$-modules.
\end{definition}

Inspired by \cite[Proposition~I.8.1]{MR1438546}, we prove the
following two propositions. 

\begin{proposition}
  \label{prop-functorialiso1}
  Let $R$ be a non-trivial ring with identity, and let $e$ be a non-zero
  idempotent in $R$. 
  Regard $eR$ as a right $R$-module. 
  Let $M$ be an $R$-module, and let $G$ be an abelian group. 
  Regard $\Hom_\Z(eR,G)$ as an $R$-module as
  above. 
  Then we have a functorial isomorphism
  $$\eta_M\colon\Hom_R(M,\Hom_\Z(eR,G))\rightarrow\Hom_\Z(eM,G)$$
  of abelian groups. 
\end{proposition}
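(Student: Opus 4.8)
The plan is to produce the isomorphism $\eta_M$ explicitly and check naturality, following the classical adjunction argument of \cite[Proposition~I.8.1]{MR1438546}. First I would write down the map in the natural direction. Given $\varphi\in\Hom_R(M,\Hom_\Z(eR,G))$, the value $\varphi(x)$ for $x\in M$ is a homomorphism $eR\to G$; I would define $\eta_M(\varphi)\colon eM\to G$ by $\eta_M(\varphi)(ex)=\varphi(x)(e)$ for $x\in M$. The first thing to verify is that this is well-defined: if $ex=ex'$ in $eM$, then since $\varphi$ is $R$-linear and $e$ is idempotent, $\varphi(x)(e)=\varphi(ex)(e)$ uses the module structure on $\Hom_\Z(eR,G)$, namely $(r\psi)(y)=\psi(yr)$; concretely $\varphi(ex)(e) = (e\cdot\varphi(x))(e)=\varphi(x)(e\cdot e)=\varphi(x)(e)$, so $\eta_M(\varphi)(ex)$ depends only on $ex$, and $\Z$-linearity in $x$ is immediate. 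So $\eta_M$ is a well-defined group homomorphism, and functoriality in $M$ (contravariant composition) is a routine check.

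Next I would construct the inverse. Given $\vartheta\in\Hom_\Z(eM,G)$, define $\theta_M(\vartheta)\colon M\to\Hom_\Z(eR,G)$ by $\theta_M(\vartheta)(x)(er)=\vartheta(erx)$ for $x\in M$, $r\in R$; note $erx\in eM$ so this makes sense. One checks $\theta_M(\vartheta)(x)$ is $\Z$-linear in the $eR$-argument, that $x\mapsto\theta_M(\vartheta)(x)$ is additive, and crucially that it is $R$-linear: for $s\in R$, $\theta_M(\vartheta)(sx)(er)=\vartheta(ersx)$ while $(s\cdot\theta_M(\vartheta)(x))(er)=\theta_M(\vartheta)(x)(ers)=\vartheta(ersx)$, so they agree. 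Then I would verify $\eta_M$ and $\theta_M$ are mutually inverse: $\eta_M(\theta_M(\vartheta))(ex)=\theta_M(\vartheta)(x)(e)=\vartheta(e\cdot e\cdot x)=\vartheta(ex)$, and conversely $\theta_M(\eta_M(\varphi))(x)(er)=\eta_M(\varphi)(erx)=\varphi(rx)(e)=(r\cdot\varphi(x))(e)=\varphi(x)(er)$, using $R$-linearity of $\varphi$ in the penultimate step. Hence $\eta_M$ is a bijective group homomorphism, and naturality in $M$ holds because all the formulas are built from composition and the module actions.

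The only mildly delicate points are bookkeeping: keeping straight which side carries the $R$-action on $eR$ (it is a right $R$-module, so $\Hom_\Z(eR,G)$ becomes a \emph{left} module via $(r\psi)(y)=\psi(yr)$ as in Definition~\ref{def-functorialiso1}), and using idempotency of $e$ at exactly the spots where one passes between $eM$ and $M$. I do not anticipate a genuine obstacle here; the proposition is a formal Hom-tensor-type adjunction specialized to the "corner" right module $eR$, and the proof is a sequence of one-line verifications. The main thing to be careful about is that $\eta_M$ as defined genuinely lands in $\Hom_\Z(eM,G)$ rather than merely $\Hom_\Z(M,G)$, which is exactly what the well-definedness computation above secures.
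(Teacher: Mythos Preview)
Your proposal is correct and follows essentially the same route as the paper's proof: both define $\eta_M(\varphi)$ by evaluating $\varphi(x)$ at $e$, build the inverse by $\psi\mapsto\psi'$ with $\psi'(x)(r)=\psi(rx)$ for $r\in eR$, and verify the two assignments are mutually inverse via the same one-line computations using $R$-linearity of $\varphi$ and idempotency of $e$. Your write-up is in fact slightly more careful than the paper's in that you explicitly address well-definedness of $\eta_M(\varphi)$ on $eM$, whereas the paper simply takes $x\in eM$ from the outset.
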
 
\begin{proof}
  The proof of this proposition is quite similar to the proof of
  \cite[Proposition~I.8.1]{MR1438546} (where $e=1$). 
  For each $\varphi\in\Hom_R(M,\Hom_\Z(eR,G))$, 
  we define $\eta_M(\varphi)=\varphi'\in\Hom_\Z(eM,G)$ by 
  $$\varphi'(x)=(\varphi(x))(e),\quad x\in eM.$$

  For each $\psi\in\Hom_\Z(eM,G)$, 
  we define a map $\psi'\colon M\rightarrow\Hom_\Z(eR,G)$ by 
  $$(\psi'(x))(r)=\psi(r x),\quad r\in eR,x\in M$$
  (clearly, $\psi'(x)$ is a $\Z$-module homomorphism). 
  Clearly, $\psi'$ is a $\Z$-module homomorphism, and for $r\in R$, $r'\in eR$ and
  $x\in M$ we
  have 
  $$\psi'(r x)(r')=\psi(r' r x)= \psi'(x)(r'r)
  =(r(\psi'(x)))(r').$$
  Hence, $\psi'$ is an $R$-module homomorphism. 

  It is evident, that $\varphi\mapsto\varphi'$ and $\psi\mapsto\psi'$
  are $\Z$-module homomorphisms. 
  Moreover, $(\varphi')'=\varphi$ and $(\psi')'=\psi$. 
  First, we see immediately that
  $(\psi')'(x)=(\psi'(x))(e)=\psi(ex)=\psi(x)$ for all $x\in eM$. 
  Moreover, for $x\in M$ and $r\in eR$ we have 
  $$((\varphi')'(x))(r)=\varphi'(rx)
  =(\varphi(rx))(e)=(r(\varphi(x)))(e)=(\varphi(x))(er)=(\varphi(x))(r).$$

  Thus $\eta_M=[\varphi\mapsto\varphi']$ is a $\Z$-module
  isomorphism, with inverse $\psi\mapsto\psi'$. 
  Functoriality is straightforward to check.
\end{proof}

\begin{proposition}
  \label{prop-functorialiso2}
  Let $R$ be a non-trivial ring with identity, and let $e$ be a non-zero
  idempotent in $R$. 
  Let $M$ be an $R$-module, and let $G$ be an abelian group. 
  Regard $Re\otimes_\Z G$ as an $R$-module as
  above. 
  Then we have a functorial isomorphism
  $$\eta_M\colon\Hom_R(Re\otimes_\Z G,M)\rightarrow\Hom_\Z(G,eM)$$
  of abelian groups. 
\end{proposition}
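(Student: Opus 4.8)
The plan is to mirror the proof of Proposition~\ref{prop-functorialiso1}, replacing the ``evaluation at $e$'' construction with a ``tensor with $\psi(g)$'' construction, so that $\eta_M$ becomes the tensor--hom adjunction $\Hom_R(Re\otimes_\Z G,M)\cong\Hom_\Z(G,\Hom_R(Re,M))$ composed with the standard identification $\Hom_R(Re,M)\cong eM$ coming from the idempotent $e$.

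First I would define the forward map. For $\varphi\in\Hom_R(Re\otimes_\Z G,M)$, set $\eta_M(\varphi)=\varphi'$ where $\varphi'(g)=\varphi(e\otimes g)$ for $g\in G$. The key point is that $e\cdot(e\otimes g)=e^2\otimes g=e\otimes g$, since the $R$-action on $Re\otimes_\Z G$ acts on the left tensor factor; hence $\varphi(e\otimes g)=\varphi\bigl(e\cdot(e\otimes g)\bigr)=e\,\varphi(e\otimes g)\in eM$, so $\varphi'$ is a well-defined map $G\to eM$, and it is additive because both $g\mapsto e\otimes g$ and $\varphi$ are additive. One also checks at once that $\varphi\mapsto\varphi'$ is a $\Z$-module homomorphism.

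Next I would produce the candidate inverse. Given $\psi\in\Hom_\Z(G,eM)$, the assignment $(x,g)\mapsto x\,\psi(g)$ for $x\in Re$, $g\in G$ is $\Z$-bilinear and $\Z$-balanced, so it factors through $Re\otimes_\Z G$ to give a $\Z$-module homomorphism $\psi'\colon Re\otimes_\Z G\to M$ with $\psi'(x\otimes g)=x\,\psi(g)$. Its $R$-linearity is immediate from $\psi'\bigl(r\cdot(x\otimes g)\bigr)=\psi'(rx\otimes g)=(rx)\psi(g)=r\bigl(x\,\psi(g)\bigr)$. I would then verify that the two composites are the identity: $(\psi')'(g)=\psi'(e\otimes g)=e\,\psi(g)=\psi(g)$ because $\psi(g)\in eM$; and for $x\in Re$ one has $xe=x$, whence $\bigl((\varphi')'\bigr)(x\otimes g)=x\,\varphi'(g)=x\,\varphi(e\otimes g)=\varphi\bigl(xe\otimes g\bigr)=\varphi(x\otimes g)$. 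Since elements of the form $x\otimes g$ generate $Re\otimes_\Z G$ and both maps are additive, this shows $\eta_M$ is a $\Z$-module isomorphism with inverse $\psi\mapsto\psi'$.

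Finally, functoriality in $M$ is routine: if $f\colon M\to M'$ is an $R$-module homomorphism then $f(eM)\subseteq eM'$, and $\eta_{M'}(f\circ\varphi)(g)=f\bigl(\varphi(e\otimes g)\bigr)=f\bigl(\eta_M(\varphi)(g)\bigr)$, so the evident square commutes. I do not expect a genuine obstacle: the only points needing a little care are that the forward map actually lands in $eM$ (this uses $e^2=e$) and that $\psi'$ is well defined on the tensor product (the balanced-bilinear check), both of which are entirely formal.
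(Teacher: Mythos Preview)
Your proof is correct and follows essentially the same approach as the paper: define $\varphi'(g)=\varphi(e\otimes g)$, define the inverse by $\psi'(x\otimes g)=x\,\psi(g)$ via the universal property of the tensor product, and verify the two composites are identities. Your write-up is in fact slightly more careful than the paper's in justifying that $\varphi'$ lands in $eM$ and that $\psi'$ is well defined on elementary tensors.
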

\begin{proof}
  For each $\varphi\in\Hom_R(Re\otimes_\Z G,M)$, we define 
  $\varphi'\in\Hom_\Z(G,eM)$ by
  $$\varphi'(g)=\varphi(e\otimes g)=e\varphi(1\otimes g),
  \quad g\in G.$$
  
  Using the universal property for tensor products, for each
  $\psi\in\Hom_\Z(G,eM)$, we define a unique $\Z$-module
  homomorphism $\psi'\colon Re\otimes_\Z G\rightarrow M$ by 
  $$\psi'(r\otimes g)=r\psi(g),\quad \text{for all }r\in Re,g\in G.$$
  It is straightforward to check that $\psi'$ is an $R$-module
  homomorphism. 
  
  Clearly, $\varphi\mapsto\varphi'$ and $\psi\mapsto\psi'$ are
  $\Z$-module homomorphisms. Moreover, $(\varphi')'=\varphi$ and
  $(\psi')'=\psi$:
  $$(\varphi')'(r\otimes g)=r\varphi'(g)
  =r\varphi(e\otimes g)fuzz
  =\varphi(r(e\otimes g))
  =\varphi(r\otimes g),\quad r\in Re,g\in G,$$
  $$(\psi')'(g)=\psi'(e\otimes g)=e\psi(g)=\psi(g),
  \quad g\in G.$$

  Thus $\eta_M=[\varphi\mapsto\varphi']$ is a $\Z$-module
  isomorphism, with inverse $\psi\mapsto\psi'$. 
  Functoriality is straightforward to check. 
\end{proof}

\begin{remark}\label{remark-proj-homo}
  Let $R$ be a non-trivial ring with identity, let $e\in R$ be a non-zero
  idempotent, and let $G$ be an abelian group. 
  Let, moreover, $M$ be an $R$-module, and let 
  $\phi\colon G\rightarrow e M$ be a $\Z$-module homomorphism.  
  Then it follows from Proposition~\ref{prop-functorialiso2} (and its proof) that there exists exactly one
  $R$-module homomorphism 
  $\phi_{G,e}\colon Re\otimes_\Z G\rightarrow M$ such that 
  $$\phi_{G,e}(e\otimes g)=\phi(g),
  \quad\text{for all }g\in G.$$

  It is immediate from the proof of Proposition~\ref{prop-functorialiso2}, that $\phi_{G,e}|_{e(Re\otimes_\Z G)}$ surjects onto $eM$ whenever $\phi$ is surjective, and that $\phi$ is injective whenever $\phi_{G,e}|_{e(Re\otimes_\Z G)}$ is injective. The converse statements do not hold in general.
However, if $eRe=\Z e$ is a free group in one generator, then the converse statements also hold.
\end{remark}

\begin{remark}\label{remark-inj-homo}
  Let $R$ be a non-trivial ring with identity, let $e\in R$ be a non-zero
  idempotent, and let $G$ be an abelian group. 
  Let, moreover, $M$ be a $R$-module, and let 
  $\phi\colon eM \rightarrow G$ be a $\Z$-module homomorphism.  
  Then it follows from Proposition~\ref{prop-functorialiso1} (and its proof) that there exists exactly one
  $R$-module homomorphism 
  $\phi_{G,e}\colon M\rightarrow\Hom_\Z(eR, G)$
  such that 
  $$(\phi_{G,e}(x))(e)=\phi(e x), 
  \quad\text{for all }x\in M.$$  
  Surely, the existence is clear from this proposition (and its proof), but also the
  uniqueness is clear, since for $\phi_{G,e}$ satisfying this we have for all
  $x\in M$ and $r\in R$ that 
  \begin{align*}
    (\phi_{G,e}(x))(er)
    =(r(\phi_{G,e}(x)))(e) 
    =(\phi_{G,e}(r x))(e) 
    =\phi(er x). 
  \end{align*}

  It is immediate from the proof of Proposition~\ref{prop-functorialiso1}, that 
  $\phi_{G,e}|_{e\Hom_\Z(eR,G)}$ is injective whenever $\phi$ is
  injective, and that $\phi$ is surjective whenever
  $\phi_{G,e}|_{e\Hom_\Z(eR,G)}$ is surjective.
  The converse statements do not hold in general. However, if $eRe=\Z e$ is a free group in one generator, then the converse statements also hold.
\end{remark}

\begin{proposition}\label{prop-proj-inj}
  Let $R$ be a non-trivial ring with identity, let $e\in R$ be a non-zero
  idempotent.
  For each projective $\Z$-module $P$ (i.e., free abelian group), 
  the $R$-module $Re\otimes_\Z P$
  is projective. 
  For each injective $\Z$-module $I$ (i.e., divisible abelian group), 
  the $R$-module $\Hom_\Z(eR,I)$ 
  is injective. 
\end{proposition}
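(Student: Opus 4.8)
The plan is to derive both halves of the statement from the two functorial isomorphisms just established (Propositions~\ref{prop-functorialiso1} and~\ref{prop-functorialiso2}), together with the elementary fact that the functor $M\mapsto eM$ on the category of $R$-modules is exact. For the latter, given a short exact sequence $0\to M'\to M\to M''\to 0$ of $R$-modules, I would check by hand that $0\to eM'\to eM\to eM''\to 0$ is exact: left exactness is immediate; any $ey''\in eM''$ with $y''\in M''$ is hit by $ey$ for any lift $y\in M$ of $y''$; and if $em$ maps to $0$ in $M''$, then $em=\iota(m')$ for some $m'\in M'$, so $em=e\cdot em=\iota(em')\in eM'$.

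For the projective statement, by Proposition~\ref{prop-functorialiso2} the functor $M\mapsto\Hom_R(Re\otimes_\Z P,M)$ is naturally isomorphic to $M\mapsto\Hom_\Z(P,eM)$. Since $P$ is a free abelian group, $\Hom_\Z(P,-)$ is exact on abelian groups; composing with the exact functor $M\mapsto eM$ shows that $\Hom_R(Re\otimes_\Z P,-)$ is exact, i.e.\ that $Re\otimes_\Z P$ is projective.

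For the injective statement, by Proposition~\ref{prop-functorialiso1} the contravariant functor $M\mapsto\Hom_R(M,\Hom_\Z(eR,I))$ is naturally isomorphic to $M\mapsto\Hom_\Z(eM,I)$. Since $I$ is divisible it is an injective $\Z$-module, so $\Hom_\Z(-,I)$ is exact on abelian groups; composing with the (covariant) exact functor $M\mapsto eM$ shows $\Hom_R(-,\Hom_\Z(eR,I))$ is exact, i.e.\ that $\Hom_\Z(eR,I)$ is injective.

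I do not anticipate a genuine obstacle here; the only point requiring care is that the isomorphisms of Propositions~\ref{prop-functorialiso1}--\ref{prop-functorialiso2} are natural in $M$, which is exactly the functoriality already asserted there. A shorter alternative avoids these propositions altogether: the decomposition $R=Re\oplus R(1-e)$ of left $R$-modules exhibits $Re$ as a direct summand of a free module, hence projective, and $Re\otimes_\Z P\cong\bigoplus Re$ when $P$ is free, a direct sum of projectives; dually, $R=eR\oplus(1-e)R$ as right $R$-modules yields an $R$-module splitting $\Hom_\Z(R,I)=\Hom_\Z(eR,I)\oplus\Hom_\Z((1-e)R,I)$, and $\Hom_\Z(R,I)$ is injective over $R$ whenever $I$ is injective over $\Z$ (being the coinduced module of $I$ along $\Z\to R$).
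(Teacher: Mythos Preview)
Your main argument is correct and is essentially the paper's own proof: both use the exactness of $M\mapsto eM$ together with the functorial isomorphisms of Propositions~\ref{prop-functorialiso1} and~\ref{prop-functorialiso2} and the exactness of $\Hom_\Z(P,-)$ resp.\ $\Hom_\Z(-,I)$. The alternative direct-summand argument you sketch at the end is a nice bonus but not needed here.
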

\begin{proof}
  If $0\rightarrow L\rightarrow M\rightarrow N\rightarrow 0$
  %$\xymatrix{0\ar[r]&L\ar[r]^\alpha&M\ar[r]^\beta&N\ar[r]&0}$ 
  is a short exact sequence of $R$-modules, 
  the induced sequence 
  $0\rightarrow eL\rightarrow eM\rightarrow eN\rightarrow 0$
  abelian groups is also short exact. 
  Since $\Hom_\Z(P,-)$ is exact when $P$ is projective, and  
  $\Hom_\Z(-,I)$ is exact when $I$ is injective, the results follow from
  the functorial isomorphisms of
  Propositions~\ref{prop-functorialiso1} and~\ref{prop-functorialiso2}. 
\end{proof}

\begin{proposition}\label{prop-proj-inj-dim}
  Let $R$ be a non-trivial ring with identity, let $e\in R$ be a non-zero
  idempotent.
  If $Re$ is flat as a \Z-module (i.e., torsion-free abelian group), then for each $\Z$-module $G$, 
  the $R$-module $Re\otimes_\Z G$
  has projective dimension at most $1$. 
  If $eR$ is projective as a \Z-module, then for each $\Z$-module $G$, 
  the $R$-module $\Hom_\Z(eR,G)$ 
  has injective dimension at most $1$. 
\end{proposition}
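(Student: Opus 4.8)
The plan is to reduce both statements to the fact that $\Z$ has global dimension one, combined with Proposition~\ref{prop-proj-inj} and the exactness of the two functors $Re\otimes_\Z-$ and $\Hom_\Z(eR,-)$ under the stated hypotheses.

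For the projective dimension statement, I would start from a free resolution $0\to F_1\to F_0\to G\to 0$ of the abelian group $G$; this has length one because a subgroup of a free abelian group is free. Applying $Re\otimes_\Z-$, which is an exact functor from $\Z$-modules to $R$-modules precisely because $Re$ is flat over $\Z$, produces a short exact sequence of $R$-modules
\[
0\to Re\otimes_\Z F_1\to Re\otimes_\Z F_0\to Re\otimes_\Z G\to 0 .
\]
Since $F_0$ and $F_1$ are free abelian groups, hence projective $\Z$-modules, Proposition~\ref{prop-proj-inj} shows that $Re\otimes_\Z F_0$ and $Re\otimes_\Z F_1$ are projective $R$-modules. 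Thus this is a projective resolution of $Re\otimes_\Z G$ of length at most one, so $\pdim_R(Re\otimes_\Z G)\le 1$.

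For the injective dimension statement, I would dualize: embed $G$ into a divisible abelian group $I^0$ and let $I^1$ be the cokernel, which is again divisible, giving an injective resolution $0\to G\to I^0\to I^1\to 0$ of length one. Applying $\Hom_\Z(eR,-)$, which is exact because $eR$ is projective as a $\Z$-module, yields a short exact sequence of $R$-modules
\[
0\to\Hom_\Z(eR,G)\to\Hom_\Z(eR,I^0)\to\Hom_\Z(eR,I^1)\to 0 .
\]
Since $I^0$ and $I^1$ are divisible, hence injective $\Z$-modules, Proposition~\ref{prop-proj-inj} shows that $\Hom_\Z(eR,I^0)$ and $\Hom_\Z(eR,I^1)$ are injective $R$-modules, so this is an injective resolution of $\Hom_\Z(eR,G)$ of length at most one, giving $\idim_R(\Hom_\Z(eR,G))\le 1$.

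There is no real obstacle here; the only points that deserve a word of justification are that flatness of $Re$ (resp.\ projectivity of $eR$) over $\Z$ is exactly what makes $Re\otimes_\Z-$ (resp.\ $\Hom_\Z(eR,-)$) carry a short exact sequence of $\Z$-modules to a short exact sequence of $R$-modules, and that the resulting $R$-module structures coincide with those of Definitions~\ref{def-functorialiso2} and~\ref{def-functorialiso1}, so that Proposition~\ref{prop-proj-inj} applies verbatim. Both are immediate from the explicit descriptions of the $R$-actions given there.
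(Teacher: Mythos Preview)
Your proof is correct and follows essentially the same approach as the paper: take a length-one projective (resp.\ injective) resolution of $G$ over $\Z$, apply the exact functor $Re\otimes_\Z-$ (resp.\ $\Hom_\Z(eR,-)$), and invoke Proposition~\ref{prop-proj-inj} to recognize the result as a projective (resp.\ injective) resolution of $R$-modules. Your write-up is in fact slightly more explicit than the paper's about why length-one resolutions exist over $\Z$ and why the $R$-module structures match those of Definitions~\ref{def-functorialiso1} and~\ref{def-functorialiso2}.
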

\begin{proof}
Assume that $Re$ is flat as a \Z-module, and let 
$P_1\into P_0\onto G$ be a projective resolution of $G$. 
Then $Re\otimes_\Z -$ is exact, so we have a short exact sequence of $\Z$-modules 
$$Re\otimes P_1\into Re\otimes P_0\onto Re\otimes G.$$
It is straightforward to show that this is indeed a short exact sequence of $R$-modules, and thus it follows from Proposition~\ref{prop-proj-inj} that this is a projective resolution. 

Assume that $eR$ is projective as a \Z-module, and let 
$G\into I_0\onto I_1$ be an injective resolution of $G$. 
Then $\Hom_\Z(eR,-)$ is exact, so we have a short exact sequence of $\Z$-modules 
$$\Hom_\Z(eR,G)\into \Hom_\Z(eR,I_0)\onto \Hom_\Z(eR,I_1).$$
It is straightforward to show that this is indeed a short exact sequence of $R$-modules, and thus it follows from Proposition~\ref{prop-proj-inj} that this is an injective resolution. 
\end{proof}

\begin{lemma} \label{lem-reduction-ext-2}
Let $R$ be a non-trivial ring with identity.
Let $K\into L\onto M$ be a short exact sequence of $R$-modules, and let $N$ be an $R$-module. 
Then the following holds: 
$$\Ext_R^2(K,N)=0\wedge\Ext_R^2(M,N)=0\Longrightarrow \Ext_R^2(L,N)=0,$$
$$\Ext_R^2(N,K)=0\wedge\Ext_R^2(N,M)=0\Longrightarrow \Ext_R^2(N,L)=0.$$
Consequently, 
$$\pdim_R K\leq 1\wedge \Ext_R^2(M,N)=0\Longrightarrow \Ext_R^2(L,N)=0,$$
$$\pdim_R M\leq 1\wedge \Ext_R^2(K,N)=0\Longrightarrow \Ext_R^2(L,N)=0,$$
$$\idim_R K\leq 1\wedge \Ext_R^2(N,M)=0\Longrightarrow \Ext_R^2(N,L)=0,$$
$$\idim_R M\leq 1\wedge \Ext_R^2(N,K)=0\Longrightarrow \Ext_R^2(N,L)=0.$$
\end{lemma}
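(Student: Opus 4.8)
The plan is to reduce everything to the long exact sequence for $\Ext_R$ attached to the short exact sequence $K\into L\onto M$, which is available since the category of (left) $R$-modules has enough projectives and enough injectives for any ring $R$ with identity. For the first main implication I would apply the contravariant functor $\Hom_R(-,N)$ to $0\to K\to L\to M\to 0$ and read off the exact segment
$$\Ext_R^2(M,N)\longrightarrow\Ext_R^2(L,N)\longrightarrow\Ext_R^2(K,N);$$
if both flanking groups vanish, exactness forces $\Ext_R^2(L,N)=0$. For the second main implication I would instead apply the covariant functor $\Hom_R(N,-)$ and obtain the exact segment
$$\Ext_R^2(N,K)\longrightarrow\Ext_R^2(N,L)\longrightarrow\Ext_R^2(N,M),$$
from which $\Ext_R^2(N,L)=0$ follows in exactly the same way.

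For the four ``consequently'' statements I would just unwind the definitions of projective and injective dimension. If $\pdim_R K\le 1$ then $\Ext_R^2(K,-)$ vanishes identically, so in particular $\Ext_R^2(K,N)=0$, and the first consequence follows from the first main implication; the case $\pdim_R M\le 1$ is the mirror image, yielding $\Ext_R^2(M,N)=0$ and then the second consequence. Dually, $\idim_R K\le 1$ forces $\Ext_R^2(-,K)=0$, hence $\Ext_R^2(N,K)=0$, and $\idim_R M\le 1$ forces $\Ext_R^2(N,M)=0$; plugging these into the second main implication gives the remaining two statements.

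There is no genuine obstacle here. Everything rests on the standard long exact sequences of the derived functors of $\Hom_R$ and on the elementary characterization of modules of projective (respectively injective) dimension at most one as those $P$ with $\Ext_R^2(P,-)=0$ (respectively those $I$ with $\Ext_R^2(-,I)=0$). The only point deserving a line of care is to observe that these facts are valid over an arbitrary non-trivial ring with identity, and hence in particular over the category ring $\NT{}$, since module categories always have enough projectives and injectives.
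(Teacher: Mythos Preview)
Your proposal is correct and matches the paper's approach exactly: the paper's proof is the single sentence ``This is a direct consequence of the long exact sequences for derived functors,'' and you have simply spelled out precisely that argument.
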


\begin{proof}
This is a direct consequence of the long exact sequences for derived functors.
\end{proof}

\begin{lemma} \label{lemma:hom-tensor-iso}
Let $R$ be a non-trivial ring with identity.  Let $M$ be a right $R$-module and let $G$ be $\Z$-module.
Then the unique group homomorphism
\[ \eta_{M,G}\colon \Hom_\Z(M,\Z)\otimes_\Z G\to \Hom_\Z(M,G) \]
determined by $\eta_{M,G}(\phi\otimes g)(m) = \phi(m)g$ is an $R$-module homomorphism that is functorial in $M$ and $G$.
If $M$ as a $\Z$-module is finitely generated and free, then $\eta_{M,G}$ is an isomorphism.
\end{lemma}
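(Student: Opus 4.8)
The plan is to construct the homomorphism $\eta_{M,G}$ explicitly and then verify its properties in stages. First I would fix $G$ and observe that the assignment $(\phi, g) \mapsto \bigl(m \mapsto \phi(m)g\bigr)$ is $\Z$-bilinear from $\Hom_\Z(M,\Z) \times G$ to $\Hom_\Z(M,G)$, so by the universal property of the tensor product it induces a unique $\Z$-module homomorphism $\eta_{M,G}$ with the stated formula on elementary tensors. To see that $\eta_{M,G}$ is $R$-linear, recall that the left $R$-action on $\Hom_\Z(M,\Z)$ (and on $\Hom_\Z(M,G)$) is given by $(r\psi)(m) = \psi(mr)$ as in Definition~\ref{def-functorialiso1}, and the $R$-action on $\Hom_\Z(M,\Z)\otimes_\Z G$ is induced from the left factor as in Definition~\ref{def-functorialiso2}. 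Then for $r \in R$ one computes on an elementary tensor:
\[
\eta_{M,G}\bigl(r(\phi\otimes g)\bigr)(m) = \eta_{M,G}\bigl((r\phi)\otimes g\bigr)(m) = (r\phi)(m)\,g = \phi(mr)\,g = \eta_{M,G}(\phi\otimes g)(mr) = \bigl(r\,\eta_{M,G}(\phi\otimes g)\bigr)(m),
\]
and since both sides are additive in the tensor argument this extends to all of $\Hom_\Z(M,\Z)\otimes_\Z G$. Functoriality in $M$ and $G$ follows by tracing the formula through the relevant induced maps: a right $R$-module map $M \to M'$ induces compatible maps on both $\Hom_\Z(-,\Z)$ and $\Hom_\Z(-,G)$ (contravariantly), and a $\Z$-module map $G \to G'$ induces compatible maps covariantly; chasing elementary tensors shows the squares commute.

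For the isomorphism claim when $M$ is finitely generated and free as a $\Z$-module, I would argue by reducing to the case $M = \Z$ and then taking finite direct sums. When $M = \Z$, both $\Hom_\Z(\Z,\Z)\otimes_\Z G \cong \Z \otimes_\Z G \cong G$ and $\Hom_\Z(\Z,G) \cong G$ canonically, and one checks that $\eta_{\Z,G}$ is exactly the canonical identification; this is a direct verification on the generator of $\Hom_\Z(\Z,\Z)$. For the general case $M \cong \Z^n$, both $\Hom_\Z(-,\Z)\otimes_\Z G$ and $\Hom_\Z(-,G)$ send finite direct sums in the first variable to finite direct sums (using that $\Hom_\Z(-,-)$ is additive and $-\otimes_\Z G$ is additive), and $\eta_{M,G}$ respects these decompositions by functoriality in $M$ applied to the inclusion and projection maps of the direct sum; hence $\eta_{\Z^n,G}$ is a finite direct sum of copies of $\eta_{\Z,G}$ and is therefore an isomorphism of abelian groups. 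Since $\eta_{M,G}$ is already known to be an $R$-module map, it is an $R$-module isomorphism.

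None of this is genuinely hard; the only point requiring a little care is making sure the $R$-module structures on the two sides are the ones the paper has defined and that the $R$-linearity computation uses the right-module action $(r\psi)(m) = \psi(mr)$ consistently — this is where a sign or variance slip would most easily occur, so I would write that computation out in full as above. The direct-sum reduction is routine but worth stating because it is the mechanism that turns the trivial case $M=\Z$ into the finitely generated free case; I would not belabor it beyond noting the additivity of both functors in $M$ and the naturality of $\eta$.
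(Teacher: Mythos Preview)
Your proposal is correct and follows essentially the same approach as the paper: construct $\eta_{M,G}$ via bilinearity, verify $R$-linearity and functoriality directly, then reduce the isomorphism claim to the case $M=\Z$ and extend to $\Z^n$. The only cosmetic difference is that the paper phrases the passage from $\Z$ to $\Z^n$ as an induction via the split-exact sequence $\Z^{n-1}\into\Z^n\onto\Z$ together with half-exactness (hence split-exactness) of both functors and the Five Lemma, whereas you invoke additivity in finite direct sums directly; these are equivalent formulations of the same argument.
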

\begin{proof}
Clearly, $\eta_{M,G}$ is a well-defined $\Z$-module homomorphism that is functorial in $M$ and~$G$.
The $R$-module structures on the domain and codomain of $\eta_{M,G}$ are given by Definitions~\ref{def-functorialiso1} and~\ref{def-functorialiso2}.  It is an easy exercise to verify that also with respect to the $R$-module structure, $\eta_{M,G}$ is a homomorphism and  functorial in $M$ and $G$.

Assume that $M$ is finitely generated as a $\Z$-module.  It suffices to show that $\eta_{M,G}$ is an isomorphism of $\Z$-modules, so we may assume that $M=\Z^n$.  For $n=1$, $\eta_{\Z,G}$ is well-known to be an isomorphism.  Since the functors $\Hom_\Z(-,\Z)\otimes_\Z G$ and $\Hom_\Z(-,G)$ are half-exact, they are also split-exact.  By considering the split-exact sequence $\Z^{n-1}\into \Z^n \onto\Z$, an application of the Five Lemma yields that $\eta_{\Z^n,G}$ is an isomorphism if $\eta_{\Z^{n-1},G}$ is. 
\end{proof}

\section{Real rank zero algebras over the pseudocircle $\PS$}
In this section we will show that stable, real rank zero \KSas in the bootstrap class $\mathcal{B}(\PS)$ are strongly classified by $\FKS$, i.e., the missing part of Theorem~\ref{thm-fullFK}. 
We will first need some preliminary homological algebra.

%\subsection{Homological algebra for $\NT$-modules}

%By Proposition~\ref{prop-proj-inj}, each $P_Y\otimes_\Z G$ is projective whenever $G$ is projective. 
%By Proposition~\ref{prop-proj-inj}, each $\Hom_\Z(Q_Y,G)$ is injective whenever $G$ is injective. 

%\begin{corollary}[Proposition 2.6 of Rasmus and Ralf]
%Let \A be a tight (separable, nuclear?) \ca over $X$ in the bootstrap class $\mathcal{B}(X)$ and let $M$ be its filtrated \K-theory defined consider as an $\NT{}$-module. 
%Assume that $M$ has projective dimension at most $2$. 
%If $\Ext_{\NT{}}^2(M,SM)=0$, then every automorphism of $M$ can be lifted to an equivalence in $\kk_X(\A,\A)$. 
%\end{corollary}

The following result is a direct consequence of 
\cite[Corollary~2.5]{rasmus-and-ralf} as $(\A,\id_{\A})$ and $(\B,\alpha^{-1})$ are liftings of $\FK(\A)$---in the sense of \cite{rasmus-and-ralf}---and the desired equivalence is one implementing the equivalence of the two liftings.
\begin{corollary}%[Corollary 2.5 of Rasmus and Ralf]
Let \A and \B be separable \cas over $X$ in the bootstrap class $\mathcal B(X)$.
Assume that $\FK(\A)$ consided as an $\NT$-module has projective dimension at most 2, and that $\Ext_{\NT{}}^2(\FK(\A),S\FK(\A))=0$.
Then any isomorphism $\alpha\colon\FK(\A)\to\FK(\B)$ can be lifted to an equivalence in $\KK_X(\A,\B)$.
%Let \A and \B be {\color{red}tight}, separable, {\color{red}nuclear} \cas over $X$ in the bootstrap class $\mathcal{B}(X)$.\fxnote{{\color{red}`Tight' and `nuclear' are not needed.}{\color{green} GR: Usually when working with bootstrap class I prefer to stick to nuclear to avoid misunderstanding and to be able to apply results as Cor.4.13 of Meyer-Nest's bootstrap paper [at least when we don't need/are interested in the non-nuclear case]. This is probably why we wrote nuclear}}
%Assume that $\FK(\A)$ considered as an $\NT{}$-module has projective dimension at most $2$,   
%and assume that we have an isomorphism $\alpha\colon\FK(\A)\rightarrow\FK(\B)$.  
%%Assume that $M$ has projective dimension at most $2$. 
%If $\Ext_{\NT{}}^2(\FK(\A),S\FK(\B))=0$, then %every automorphism of $M$ 
%$\alpha$ can be lifted to an equivalence in $\KK_X(\A,\B)$. 
\end{corollary}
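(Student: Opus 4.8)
The plan is to deduce this directly from the abstract lifting result \cite[Corollary~2.5]{rasmus-and-ralf}, which (in the framework of liftings of filtered $\K$-theory along the universal coefficient spectral sequence) says that if an $\NT$-module $M$ has projective dimension at most $2$ and satisfies $\Ext^2_{\NT}(M,SM)=0$, then any two separable $\KK_X$-bootstrap liftings of $M$ are $\KK_X$-equivalent via an equivalence inducing the identity on $M$. So the only work is to recognize that the hypotheses of the present corollary provide exactly the input that result needs, after transporting everything along $\alpha$.

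First I would set up the two liftings. The pair $(\A,\id_{\FK(\A)})$ is tautologically a lifting of $M:=\FK(\A)$ in the sense of \cite{rasmus-and-ralf}. For the second lifting, I would use $\alpha\colon\FK(\A)\to\FK(\B)$ to pull back the canonical lifting structure on $\B$: the pair $(\B,\alpha^{-1})$ (more precisely, $\B$ together with the identification $\FK(\B)\xrightarrow{\alpha^{-1}}\FK(\A)=M$) is then also a lifting of the \emph{same} $\NT$-module $M$. Both $\A$ and $\B$ are separable and lie in $\mathcal B(X)$ by hypothesis, so both liftings are admissible inputs to \cite[Corollary~2.5]{rasmus-and-ralf}.

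Next I would check the homological hypotheses. The hypothesis $\pdim_{\NT}M\le 2$ is literally assumed, and $\Ext^2_{\NT}(M,SM)=\Ext^2_{\NT}(\FK(\A),S\FK(\A))=0$ is literally assumed; note that since the two liftings are of the \emph{same} module $M$, we do not need any $\Ext$-vanishing statement mixing $\FK(\A)$ and $\FK(\B)$ — everything is phrased intrinsically in terms of $M$, which is why the hypothesis on $\A$ alone suffices. Applying \cite[Corollary~2.5]{rasmus-and-ralf} to the two liftings $(\A,\id)$ and $(\B,\alpha^{-1})$ then yields a $\KK_X$-equivalence $x\in\KK_X(\A,\B)$ that implements the equivalence of liftings; unwinding what ``implements the equivalence of liftings'' means, $\FK(x)$ is precisely $\alpha\colon\FK(\A)\to\FK(\B)$ (up to the identifications), so $x$ is the desired lift of $\alpha$.

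The only real subtlety — and the step I would be most careful about — is the bookkeeping of what a ``lifting'' is in \cite{rasmus-and-ralf} and verifying that pulling back along the isomorphism $\alpha$ genuinely produces a lifting of $M$ rather than of $\FK(\B)$, i.e.\ that the spectral-sequence/UCT data attached to $\B$ is compatible with the identification $\alpha^{-1}$. This is a formal naturality check (the universal coefficient machinery is functorial, and isomorphisms of $\NT$-modules act on all the derived terms), so it should go through without difficulty, but it is where one must be precise to be sure the cited corollary applies verbatim; once that is granted, the conclusion is immediate.
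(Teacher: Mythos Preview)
Your proposal is correct and follows essentially the same approach as the paper: the paper also derives the corollary directly from \cite[Corollary~2.5]{rasmus-and-ralf} by observing that $(\A,\id_{\A})$ and $(\B,\alpha^{-1})$ are both liftings of $\FK(\A)$ in the sense of that reference, so that the $\KK_X$-equivalence produced there is precisely a lift of $\alpha$. Your additional remarks about the bookkeeping of liftings and the naturality check are reasonable elaborations of what the paper states in a single sentence.
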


\begin{proposition}[{\cite[Proposition~2]{bentmann:gjogv}}]
\label{prop-fourpoints-dim-most-2}
Let $X$ be a $T_0$-space with at most $4$ points and let \A be a separable \ca over $X$.
Then $\FK(\A)$ has projective dimension at most $2$ in the category of $\NT$-modules. 
\end{proposition}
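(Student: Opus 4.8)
The plan is to reduce the global statement to the known projective resolutions of the fundamental projective generators $P_Y$ of the category $\NT$. Since every $\NT$-module $\FK(\A)$ arising from a separable \ca fits into a long exact sequence built from the six-term sequences of the space, the key observation is that $\FK(\A)$ always admits a projective presentation by finite direct sums of the $P_Y$ and $SP_Y$ (via the universal property: a morphism $P_Y\to\FK(\A)$ is the same as an element of $\K_0(\A(Y))$, so picking generators of each group $\K_i(\A(Y))$ yields a surjection $\bigoplus_{Y,j} S^j P_Y^{(\text{gens})}\onto\FK(\A)$). Hence $\pdim_{\NT}\FK(\A)\le 2$ will follow once one shows that the kernel of any such surjection has projective dimension at most $1$, and for that it suffices (by the standard "syzygy" argument, using that a submodule of a module of projective dimension $\le n$ appearing as a kernel of a map from a projective has projective dimension $\le n-1$ when the ambient projective resolution is nice) to know that the relations among the $P_Y$ are themselves generated by projective modules in one further step — i.e., that $\NT$ itself, as a module over its category ring, has a length-$2$ projective resolution of each $P_Y$.

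So the first concrete step I would take is to record, for the space $X$ in question (and there are only finitely many $T_0$-spaces with $\le 4$ points to check, up to homeomorphism), an explicit projective resolution
\[
0\to R_2\to R_1\to R_0\to P_Y\to 0
\]
of each indecomposable projective $P_Y$ ($Y\in\LC(X)$), with each $R_k$ a finite direct sum of modules of the form $P_Z$ or $SP_Z$. These resolutions are essentially the ones recorded by Meyer–Nest and by Bentmann; the point is that for $\le 4$ points the "homological dimension" of $\NT$ is $2$. Given these, the second step is a diagram chase / dimension shift: take the presentation $\bigoplus_j S^j P_{Z_j}\onto\FK(\A)$ with kernel $\Omega$; splice in the resolutions of each $P_{Z_j}$ (suspended as needed) to build a projective resolution of $\FK(\A)$ and check it terminates at stage $2$, i.e. that the second syzygy $\Omega_2$ is projective.

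The main obstacle — and the reason the restriction to $\le 4$ points is essential — is precisely establishing that each $P_Y$ has projective dimension exactly $\le 2$ over $\NT$, i.e. controlling the homological dimension of the ring $\NT$. This is a finite but genuinely case-dependent computation: one must understand the relations in the category ring $\NT(X)$ (the composition relations among the natural transformations $i$, $r$, $\delta$ coming from the six-term sequences) and verify that the "second module of relations" is a direct summand of a free $\NT$-module. For the generic four-point spaces this is in the literature; the mild subtlety is handling the two exceptional spaces $\CSP$ and $\PS$ uniformly, but even there the bound $2$ holds (it is the bound $>1$, not $\le 2$, that fails to be uniform, which is exactly why these spaces are the hard cases elsewhere in the paper). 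Once the resolutions of the $P_Y$ are in hand, the passage to arbitrary $\FK(\A)$ is purely formal: tensoring/splicing projective resolutions and a dimension shift, with no convergence issues since everything is concentrated in finitely many degrees.

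Alternatively — and this may be the cleanest route to write up — one can invoke the fact that $\FK(\A)$ is computed from the commutative \cas $\Rcal_Y$ via $\FK(\A)(Y,j)=\KK_0(X;\Sigma^j\Rcal_Y,\A)$, together with Meyer–Nest's analysis showing each $\Rcal_Y$ (equivalently each $P_Y$) has a length-$2$ projective resolution in $\NT$ for $|X|\le 4$; then $\pdim_{\NT}\FK(\A)\le 2$ is immediate from \cite[Proposition~I.8.1]{MR1438546}-type arguments relating $\Ext^n_{\NT}$ to the resolution length of the generators. I would present the argument in this second form, citing the resolutions of the $P_Y$ as the input and performing only the dimension-shift bookkeeping explicitly.
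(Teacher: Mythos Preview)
There is a genuine confusion at the heart of your proposal. The modules $P_Y=\NT e_Y$ (and their suspensions $SP_Y\cong\NT\bar e_Y$) are \emph{projective} by construction---they are direct summands of the free $\NT$-module $\NT$---so their projective dimension is $0$ and the only resolution they need is $0\to P_Y\xrightarrow{\id}P_Y\to 0$. Your proposed step ``record an explicit projective resolution $0\to R_2\to R_1\to R_0\to P_Y\to 0$'' is therefore vacuous, and the plan to ``splice in the resolutions of each $P_{Z_j}$'' collapses: splicing in trivial resolutions just returns the original presentation $P_0\onto\FK(\A)$ and says nothing about its kernel. You may be reaching for a bound on the \emph{global} dimension of $\NT$, but that is governed by the projective dimensions of the simple $\NT$-modules, not of the projectives $P_Y$; and in any case the statement here is not about arbitrary $\NT$-modules.

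The actual argument (in Bentmann's cited paper; the present paper does not reprove it) uses crucially that $\FK(\A)$ is an \emph{exact} $\NT$-module. One surjects a projective $P_0$ onto $\FK(\A)$; the first syzygy $\Omega_1$ is again exact by the two-out-of-three property (Lemma~\ref{lem-two-out-of-three}) and has free abelian values; one repeats to obtain an exact second syzygy $\Omega_2$ with free values; and then one shows---by a finite case analysis over the four-point $T_0$-spaces, exploiting the concrete description of $\NT$ in each case---that this $\Omega_2$ is projective. Your first paragraph correctly isolates the presentation step, but the reduction you then make is to the wrong target: the work lies in analysing the syzygies of an exact module, not in resolving the already-projective $P_Y$.
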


In the rest of this section, we will consider the connected four point space \PS.  
%$$X=\{1,2,3,4\}$$
%with the topology
%$$\{\emptyset,\{3\},\{4\},\{3,4\},\{1,3,4\},\{2,3,4\},\{1,2,3,4\}\}.$$
%This can also be illustrated by the graph 
%$$\xymatrix{
%4\ar[r]\ar[d] & 1 \\
%2 & 3\ar[l]\ar[u].
%}$$
This space is considered in \cite[Section~6.3]{bentmann:master}, and the filtrated \K-theory is known to coincide with the \cirkt. The $\FKS$ is illustrated by the diagram
\begin{equation}
\vcenter{\xymatrix{    &                         & 13\ar[dr]^{i} \\
3\ar[r]^-{i}\ar[ddr]|!{[dd];[r]}\hole_(.3){i} & 134\ar[ur]^r\ar[r]_r\ar[dr]_i & 14\ar[ddr]|!{[dd];[r]}\hole^(.7){i} & 123 \ar[r]^r\ar[ddr]|!{[dd];[r]}\hole^(.7)r & 1\ar[r]|-\circ^-{\delta}\ar[ddr]|!{[dd];[r]}\hole^(.7){\delta}|(.7)\circ & 3 \\ 
                &                         & 1234\ar[ur]|!{[u];[rd]}\hole^(.7)r\ar[dr]|!{[d];[ru]}\hole_(.7)r \\
4\ar[r]_-{i}\ar[uur]^(.3){i} & 234\ar[dr]_r\ar[r]_r\ar[ur]^i & 23\ar[uur]_(.7){i} & 124 \ar[r]_r\ar[uur]_(.7)r & 2\ar[r]|-\circ_-{\delta}\ar[uur]_(.7){\delta}|(.7)\circ & 4 \\
                &                         & 24\ar[ur]_i
}}\label{eq-diagram-pseudocircle}
\end{equation}
Unlike the other four-point spaces, there is not (yet) a refined invariant for which we have a general UCT over $\PS$. Therefore we need to use other techniques here.

Assume that $M=\FKS(\A)$ for some separable \ca over $\PS$, and consider this as an $\NT{}$-module.
By reducing the problem in several steps, we want to show that $\Ext_\NT^2(M,SM)=0$ when $M$ is \rrzero{}.

\begin{definition}[{\cite{MR2953205}, \cite{arXiv:1301.7223v2}}]
Let $X$ be a finite $T_0$-space.
An $\NT$-module $M$ over $X$ is called \emph{exact} if for all $Y\in\LC(X)$ and all $U\in\Op(Y)$, the sequence
\[ \xymatrix{
M(U_0) \ar[r]^-{i_{U_0}^{Y_0}} & M(Y_0) \ar[r]^-{r_{Y_0}^{Y\setminus U_0}} & M(Y\setminus U_0) \ar[d]^-{\delta_{Y\setminus U_0}^{U_1}} \\
M(Y\setminus U_1) \ar[u]^-{\delta_{Y\setminus U_1}^{U_0}} & M(Y_1)\ar[l]^-{r_{Y_1}^{Y\setminus U_1}} & M(U_1) \ar[l]^-{i_{U_1}^{Y_1}}
} \]
is exact.
An exact $\NT$-module $M$ over $X$ is called \emph{\rrzero{}} if ${\delta_{Y\setminus U_0}^{U_1}}$ vanishes for all $Y\in\LC(X)$ and all $U\in\Op(Y)$.
\end{definition}
Clearly, the $\NT$-module $\FK(A)$ is exact for any \ca{} \A over a finite $T_0$-space $X$.
If furthermore the \ca{} \A has real rank zero, then $\FK(\A)$ is \rrzero{}, and if \A is a \KXa{} then \A has real rank zero if and only if $\FK(\A)$ is \rrzero{}, cf.~\cite[Remark~3.9]{arXiv:1301.7223v2}.

\begin{lemma}\label{lem-two-out-of-three}
Let $L\into M\onto N$ be a short exact sequence of $\NT{}$-modules over a finite $T_0$-space $X$. 
If two of the modules $L$, $M$ and $N$ are exact, so is the third. 
If $M$ is \rrzero{}, so are $L$ and $N$. 
\end{lemma}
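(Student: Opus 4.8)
The plan is to analyze the long exact sequence in cohomology obtained from the short exact sequence $L\into M\onto N$, but actually the cleanest route exploits that exactness of an $\NT$-module is a purely diagram-theoretic condition, testable pointwise on the six-term sequences associated to each pair $U\in\Op(Y)$, $Y\in\LC(X)$. So the first step is to fix such a pair $(Y,U)$ and write down the short exact sequences of six-term complexes: for each group appearing in the hexagon for $(Y,U)$, applying $L\into M\onto N$ gives a short exact sequence of abelian groups, and the relevant maps $i,r,\delta$ are natural, so we obtain a short exact sequence of the (hexagonal) chain complexes $\mathcal C_L\into\mathcal C_M\onto\mathcal C_N$ sitting in a commutative ladder. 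Each $\mathcal C_\bullet$ is a $\Z/6$-periodic complex, and ``$M$ exact'' means precisely that $\mathcal C_M$ is an exact complex, i.e.\ has vanishing homology at every vertex.

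The key step is then the standard homological observation: a short exact sequence of chain complexes $\mathcal C_L\into\mathcal C_M\onto\mathcal C_N$ induces a long exact sequence in homology, which here is again $\Z/6$-periodic. From the periodic long exact sequence $\cdots\to H_n(\mathcal C_L)\to H_n(\mathcal C_M)\to H_n(\mathcal C_N)\to H_{n-1}(\mathcal C_L)\to\cdots$, one reads off: if $H_\bullet(\mathcal C_M)=0$ and $H_\bullet(\mathcal C_L)=0$, then $H_\bullet(\mathcal C_N)=0$; if $H_\bullet(\mathcal C_M)=0$ and $H_\bullet(\mathcal C_N)=0$, then $H_\bullet(\mathcal C_L)=0$; and if $H_\bullet(\mathcal C_L)=0$ and $H_\bullet(\mathcal C_N)=0$, then the connecting maps $H_n(\mathcal C_N)\to H_{n-1}(\mathcal C_L)$ are trivially between zero groups, so $H_\bullet(\mathcal C_M)=0$ as well. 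Since $(Y,U)$ was arbitrary, this proves the three-out-of-two statement for exactness. (One should remark that the connecting homomorphism of the long exact sequence need not coincide with any structural map of the $\NT$-modules, but this is irrelevant: we only need the exactness of the long sequence as a sequence of abelian groups.)

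For the \rrzero{} assertion, suppose $M$ is \rrzero{}. Then $M$ is in particular exact, so by the first part $L$ and $N$ are exact. It remains to check that all the boundary maps $\delta_{Y\setminus U_0}^{U_1}$ (and likewise $\delta_{Y\setminus U_1}^{U_0}$) vanish on $L$ and on $N$. For $L$: the inclusion $L\into M$ is natural, so $\delta^{L}$ followed by the inclusion into $M$ equals the inclusion followed by $\delta^{M}=0$; since the inclusion $L(U_1)\into M(U_1)$ is injective, $\delta^{L}=0$. For $N$: the surjection $M\onto N$ is natural, so $\delta^{N}$ precomposed with the surjection $M(Y\setminus U_0)\onto N(Y\setminus U_0)$ equals the surjection composed with $\delta^{M}=0$; since $M(Y\setminus U_0)\onto N(Y\setminus U_0)$ is surjective, $\delta^{N}=0$. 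This completes the argument.

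The main obstacle here is essentially bookkeeping rather than depth: one must be careful that ``exactness'' of an $\NT$-module is genuinely characterized by exactness at \emph{every} vertex of \emph{every} six-term hexagon (for all pairs $U\in\Op(Y)$, $Y\in\LC(X)$), and that the structural transformations $i,r,\delta$ are natural with respect to $\NT$-module morphisms — both of which are built into the definition — so that the ladder really commutes and the snake/periodic-long-exact-sequence machinery applies verbatim. Once that is in place, the proof is a one-line invocation of the long exact homology sequence for each hexagon, plus the naturality argument for the $\delta$'s.
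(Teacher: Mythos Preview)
Your argument for the first part is correct and is precisely the standard proof (the paper simply cites \cite[Proposition~3.9]{MR2953205}, whose proof is exactly the long-exact-sequence-of-homology argument you spell out).

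There is, however, a genuine gap in your treatment of the second part. You write: ``Then $M$ is in particular exact, so by the first part $L$ and $N$ are exact.'' This is false as stated: the first part needs \emph{two} of the three modules to be exact, and you only have one. Exactness of $M$ alone does not force exactness of an arbitrary submodule~$L$ or quotient~$N$. Your subsequent naturality argument showing that $\delta_{Y\setminus U_0}^{U_1}$ vanishes on $L$ (using injectivity of $L\into M$) and on $N$ (using surjectivity of $M\onto N$) is perfectly correct, and this is all the paper intends by ``Second part is clear.'' Indeed, in every application of the lemma later in the paper the exactness of the other two modules is established separately via the first part before the \rrzero{} conclusion is invoked; the second assertion is really only claiming that the vanishing-of-$\delta$ condition passes to $L$ and $N$. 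So the fix is simply to drop the spurious sentence about exactness of $L$ and $N$ (or to note that this must be assumed or checked independently), and keep your naturality argument as the proof of the $\delta$-vanishing. As a minor aside, the definition of \rrzero{} only demands that $\delta_{Y\setminus U_0}^{U_1}$ vanish, so there is no need to treat the other boundary map.
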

\begin{proof}
First part is \cite[Proposition~3.9]{MR2953205}. Second part is clear.  
\end{proof}

\begin{lemma}\label{lem-ext-2-is-zero-for-reduced}
Let $M$ and $N$ be exact $\NT{}$-modules over $\PS$ with 
$M(3_1)$, $M(4_1)$, $M(134_1)$, $M(234_1)$,
$M(1_0)$, $M(2_0)$, $M(123_0)$, $M(124_0)$,
%$N(3_1)$, $N(4_1)$, $N(134_1)$, $N(234_1)$,
$N(1_0)$, $N(2_0)$, $N(123_0)$, and $N(124_0)$ being zero. 
Then $\Ext_\NT^2(M,SN)=0$. 
\end{lemma}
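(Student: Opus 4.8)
The plan is to exploit the vanishing hypotheses to show that $M$ is built, via short exact sequences, from a few very simple $\NT$-modules whose behaviour against $SN$ under $\Ext^2_{\NT}$ can be controlled directly, most likely by exhibiting projective resolutions of length at most $1$ for the relevant pieces. Concretely, I would first look at the diagram \eqref{eq-diagram-pseudocircle} and observe that the hypothesis kills exactly the groups attached to the two ``outer'' points $3$ and $4$ (namely $M(3_1)$, $M(4_1)$, $M(134_1)$, $M(234_1)$, since $\widetilde{\{3\}}=\{1,3,4\}$ etc.) together with the $K_0$-groups of $M$ and $N$ at $1$, $2$, $123$, $124$. So the only possibly-nonzero groups of $M$ sit in the ``core'' consisting of the points $1,2$ in degree $1$, their open neighbourhoods, and the various $K_0$-groups at $3,4$ and the two-point sets. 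Using exactness of $M$ (and of $N$), the six-term sequences then force many more of the structure maps to be determined, collapsing the module considerably.

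The key technical device should be the homological-algebra toolkit developed in the preceding section: Proposition~\ref{prop-proj-inj-dim} together with Remark~\ref{remark-proj-homo}, which say that modules of the form $\NT e_Y\otimes_\Z G$ have projective dimension at most $1$, and Lemma~\ref{lem-reduction-ext-2}, which lets one propagate the vanishing of $\Ext^2$ along short exact sequences once one of the outer terms has $\pdim\le 1$. So the strategy is: (i) decompose $M$ into a short exact sequence $L'\into M\onto N'$ (or a short filtration) in which $N'$ is concentrated at a single locally closed subset $Y$ — hence of the form $\NT e_Y\otimes_\Z G$ or $\NT\bar e_Y\otimes_\Z G$ — and $L'$ is ``smaller'' in the same sense; (ii) by Proposition~\ref{prop-proj-inj-dim} (the $\Z$-modules $\NT e_Y$ here are torsion-free, in fact free, for the spaces in play) each such piece has projective dimension $\le 1$, so $\Ext^2_{\NT}(\text{piece}, SN)=0$ automatically; (iii) by iterated application of Lemma~\ref{lem-reduction-ext-2} the vanishing passes up to $M$. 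Alternatively, if a cleaner bound is available, one shows directly that $M$ itself has projective dimension at most $1$ under these hypotheses, which immediately gives $\Ext^2_{\NT}(M,SN)=0$ for every $N$; this would subsume the statement and is probably the form the authors aim for.

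In carrying this out I would (a) fix notation from diagram~\eqref{eq-diagram-pseudocircle}, list the surviving groups of $M$ and the surviving structure maps after imposing the hypotheses and exactness, (b) identify one or two ``free generators'' among the surviving degree-$1$ groups at the inner points $1$ and $2$ and build a surjection from a projective $\NT$-module (a direct sum of $SP_Y$'s and $P_Y$'s tensored with the appropriate $\Z$-modules) onto $M$, (c) check that the kernel is again of the controlled form — again a direct sum of such projectives, using Remark~\ref{remark-proj-homo} to recognise the kernel's submodule structure — so that the resolution has length $1$, and (d) conclude via Lemma~\ref{lem-reduction-ext-2} (or directly). The interplay between the $K_0$- and $K_1$-layers via the boundary maps $\delta$ is where the real work lies.

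The main obstacle I expect is step (c): verifying that the kernel of the chosen projective cover is itself projective (equivalently that $M$ has $\pdim\le 1$) rather than merely having $\pdim\le 2$ as Proposition~\ref{prop-fourpoints-dim-most-2} guarantees in general. The pseudocircle $\PS$ is precisely the space with no refined invariant and where projective dimension $2$ genuinely occurs for general $\A$, so the argument must use the vanishing hypotheses in an essential way — presumably to break the ``long'' relation in the category ring $\NT$ coming from the cycle in the graph of $\PS$, so that the obstruction class in $\Ext^2$ has nowhere to live. Pinning down exactly which generator-and-relation data survives, and checking the kernel splits as a sum of the modules covered by Proposition~\ref{prop-proj-inj-dim}, is the delicate bookkeeping at the heart of the proof.
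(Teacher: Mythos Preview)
Your plan aims for a stronger conclusion than the lemma states: both your filtration approach and your ``cleaner'' alternative amount to showing $\pdim_{\NT} M\le 1$, which would give $\Ext^2_{\NT}(M,L)=0$ for \emph{every} $L$ and would make the hypotheses on $N$ superfluous. You correctly flag step~(c)---that the first syzygy is again projective---as the crux, but you do not establish it, and there is no reason to expect it to hold. The paper does \emph{not} prove $\pdim_{\NT} M\le 1$, and its argument uses the vanishing hypotheses on $N$ in an essential way, so your guess about ``the form the authors aim for'' is off.

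What the paper actually does is this. Using the vanishing hypotheses on $M$ together with exactness, one first checks that a surjection onto $M(1_1)$ and $M(2_1)$ automatically forces surjectivity on all the even-degree groups. One then builds an epimorphism $P\onto M$ with $P$ a direct sum of copies of $SP_Y$ for $Y\in\{13,14,1234,23,24,123,124,1,2\}$, passes to the kernel, and repeats---but at the second step one arranges the epimorphism $P'\onto\widetilde P$ to be an \emph{isomorphism} at $13_1,14_1,23_1,24_1,1234_1$. The resulting second syzygy $\widetilde{P'}$ then has all those groups zero as well, so the third cover $P''\onto\widetilde{P'}$ can be taken using only $SP_{123},SP_{124},SP_1,SP_2$. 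Letting $P'''=\ker(P''\to\widetilde{P'})$, Proposition~\ref{prop-fourpoints-dim-most-2} forces $P'''$ to be projective, yielding a resolution
\[
0\to P'''\to P''\to P'\to P\to M\to 0.
\]
Now comes the point you are missing: by Remark~\ref{remark-proj-homo}, $\Hom_{\NT}(P'',SN)$ is determined by maps into $N(123_0),N(124_0),N(1_0),N(2_0)$, and these are zero by hypothesis on $N$. Hence $\Hom_{\NT}(P'',SN)=0$, and $\Ext^2_{\NT}(M,SN)$, being a subquotient of this group, vanishes. The argument never asserts that $M$ has projective dimension~$1$; it only arranges that the degree-$2$ term of a particular resolution is ``orthogonal'' to $SN$.
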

\begin{proof}
We need to make a projective resolution of $M$. First we examine the special structure of $M$.
For $i=1,2$ and $j=3,4$, we have cyclic six term exact sequences 
$$\xymatrix{
M(j_0)\ar[r] & M(ij_0)\ar[r] & M(i_0)\ar[d] \\ 
M(i_1)\ar[u] & M(ij_1)\ar[l] & M(j_1)\ar[l] 
}$$
and because of the zeros in $M$, we have surjective homomorphisms $M(j_0)\rightarrow M(ij_0)$. 
Similarly, we can show that we have surjective homomorphisms $M(3_0)\oplus M(4_0)\rightarrow M(1234_0)$, $M(3_0)\oplus M(4_0)\rightarrow M(134_0)$, $M(3_0)\oplus M(4_0)\rightarrow M(234_0)$, $M(1_1)\oplus M(2_1)\rightarrow M(3_0)$ and $M(1_1)\oplus M(2_1)\rightarrow M(4_0)$. 
Thus if we have a $\NT{}$-module homomorphism from an $\NT{}$-module $L$ to $M$ that is surjective when restricted to $L(1_1)\rightarrow M(1_1)$ and $L(2_1)\rightarrow M(2_1)$, it will automatically be surjective on the whole of the even degree. 
Because of its special structure, we thus can get a projective object of the form 
\begin{align*}
P
&=%
%\bigoplus_{i\in I_{3}}SP_{3}\oplus\bigoplus_{i\in I_{4}}SP_{4}
%\oplus\bigoplus_{i\in I_{134}}SP_{134}\oplus\bigoplus_{i\in I_{234}}SP_{234} \\ 
%&\quad\oplus
\bigoplus_{i\in I_{13}}SP_{13}\oplus\bigoplus_{i\in I_{14}}SP_{14}
\oplus\bigoplus_{i\in I_{1234}}SP_{1234}
\oplus\bigoplus_{i\in I_{23}}SP_{23}\oplus\bigoplus_{i\in I_{24}}SP_{24} \\
&\quad\oplus\bigoplus_{i\in I_{123}}SP_{123}\oplus\bigoplus_{i\in I_{124}}SP_{124}
\oplus\bigoplus_{i\in I_{1}}SP_{1}\oplus\bigoplus_{i\in I_{2}}SP_{2}
\end{align*}
and an epimorphism $\phi$ from $P$ to $M$. 

The kernel $\widetilde{P}$ of $\phi$ satisfies, that $\widetilde{P}(Y)$ is a free abelian group for all $Y\in\LC(\PS)^*$. 
Moreover, since $P(3_1)$, $P(4_1)$, $P(134_1)$, $P(234_1)$, $P(123_0)$, $P(124_0)$, $P(1_0)$, and $P(2_0)$ are zero, the same holds for $\widetilde{P}$. 
Consequently, we can get a projective object of the form 
\begin{align*}
P'
&=\bigoplus_{i\in I_{13}'}SP_{13}\oplus\bigoplus_{i\in I_{14}'}SP_{14}
\oplus\bigoplus_{i\in I_{1234}'}SP_{1234}
\oplus\bigoplus_{i\in I_{23}'}SP_{23}\oplus\bigoplus_{i\in I_{24}'}SP_{24} \\
&\quad\oplus\bigoplus_{i\in I_{123}'}SP_{123}\oplus\bigoplus_{i\in I_{124}'}SP_{124}
\oplus\bigoplus_{i\in I_{1}'}SP_{1}\oplus\bigoplus_{i\in I_{2}'}SP_{2}
\end{align*}
and an epimorphism $\phi'$ from $P'$ to $\widetilde{P}$ such that the homomorphisms 
$P'(13_1)\rightarrow\widetilde{P}(13_1)$, 
$P'(14_1)\rightarrow\widetilde{P}(14_1)$, 
$P'(1234_1)\rightarrow\widetilde{P}(1234_1)$, 
$P'(23_1)\rightarrow\widetilde{P}(23_1)$, and 
$P'(24_1)\rightarrow\widetilde{P}(24_1)$
are isomorphisms. 
The kernel $\widetilde{P'}$ of $\phi'$ satisfies, that $\widetilde{P'}(Y)$ is a free abelian group for all $Y\in\LC(\PS)^*$. 
%From Proposition~\ref{prop-fourpoints-dim-most-2} it follows that $M$ has projective dimension at most $2$, and thus $\widetilde{P'}$ is actually a projective object, and we have a projective resolution:  
%$$0\rightarrow \widetilde{P'} \rightarrow P'\rightarrow P\rightarrow M\rightarrow 0.$$
Moreover, $\widetilde{P'}(3_1)$, $\widetilde{P'}(4_1)$, $\widetilde{P'}(134_1)$, $\widetilde{P'}(234_1)$, $\widetilde{P'}(13_1)$, $\widetilde{P'}(14_1)$, $\widetilde{P'}(23_1)$, $\widetilde{P'}(24_1)$, $\widetilde{P'}(1234_1)$, $\widetilde{P'}(123_0)$, $\widetilde{P'}(124_0)$, $\widetilde{P'}(1_0)$, and $\widetilde{P'}(2_0)$ are zero. 
We now construct 
%To make life easier, we construct 
a projective object of the form 
\begin{align*}
P''
&=\bigoplus_{i\in I_{123}''}SP_{123}\oplus\bigoplus_{i\in I_{124}''}SP_{124}
\oplus\bigoplus_{i\in I_{1}''}SP_{1}\oplus\bigoplus_{i\in I_{2}''}SP_{2}
\end{align*}
and an epimorphism $\phi''$ from $P''$ to $\widetilde{P'}$. 
%such that the homomorphisms 
%$P''(123_1)\rightarrow\widetilde{P'}(123_1)$ and $P''(124_1)\rightarrow\widetilde{P'}(124_1)$ are isomorphisms. 
%The kernel $P'''$ of $\phi''$ satisfies, that $\widetilde{P}(Y)$ is a free abelian group for all $Y\in\LC(\PS)^*$. 
Let $P'''$ denote the kernel of $\phi''$. 
% satisfies, that $\widetilde{P}(Y)$ is a free abelian group for all $Y\in\LC(\PS)^*$. 
From Proposition~\ref{prop-fourpoints-dim-most-2} it follows that $M$ has projective dimension at most $2$, and thus $P'''$ is actually a projective object, and we have a projective resolution:
$$0\rightarrow P'''\rightarrow P'' \rightarrow P'\rightarrow P\rightarrow M\rightarrow 0.$$
%Moreover, $P'''(3_1)$, $P'''(4_1)$, $P'''(134_1)$, $P'''(234_1)$, $P'''(13_1)$, $P'''(14_1)$, $P'''(23_1)$, $P'''(24_1)$, $P'''(1234_1)$, $P'''(123_1)$, $P'''(124_1)$, $P'''(123_0)$, $P'''(124_0)$, $P'''(1_0)$, and $P'''(2_0)$ are zero. 

From Remark~\ref{remark-proj-homo} it follows that $\Hom_\NT{}(P'',SN)$ is completely determined by the homomorphisms 
$P''(123_1)\rightarrow N(123_0)$,
$P''(124_1)\rightarrow N(124_0)$,
$P''(1_1)\rightarrow N(1_0)$, and 
$P''(2_1)\rightarrow N(2_0)$. 
But since these codomains are all zero, 
$\Hom_\NT{}(P'',SN)$ is zero. 
From the definition of $\Ext_\NT^2(M,SN)$ as the second cohomology group of $\Hom_\NT{}(-,SN)$ applied to the above projective resolution, it follows that $\Ext_\NT^2(M,SN)$ is zero.
\end{proof}

\begin{lemma}\label{l:exact}
Let $e$ be an idempotent in the ring $\NT{}$ over a finite $T_0$-space $X$ and let $G$ be an abelian group.  If $\NT{}e$ is a torsion-free abelian group, then $\NT{}e \otimes_{\Z} G$ is an exact $\NT$-module.  If $e\NT$ is a free abelian group, then $\Hom_\Z(e \NT{},G)$ is an exact $\NT{}$-module.
\end{lemma}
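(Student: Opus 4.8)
The plan is to reduce the exactness of $\NT e \otimes_\Z G$ (resp.\ $\Hom_\Z(e\NT,G)$) to the exactness of $\NT e$ itself (resp.\ $e\NT$ as a right module), and then to invoke the fact that $\NT e \cong P_Y$ (after writing $e = e_Y$ or $\bar e_Y$) is a representable, hence exact, $\NT$-module. Since $e$ is an idempotent in the category ring $\NT$, it is a (finite) sum of the canonical idempotents $e_Y$, $\bar e_Y$ attached to objects $(Y,0)$, $(Y,1)$ of $\NT$; because both functors $\NT e \otimes_\Z -$ and $\Hom_\Z(e\NT,-)$ and the notion of exactness are additive in $e$, it suffices to treat $e = e_Y$ and $e = \bar e_Y$ separately, i.e.\ to assume $\NT e = P_Y$ or $SP_Y$ and $e\NT = Q_Y$ or $SQ_Y$. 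The module $P_Y = \FK(\Rcal_Y)$ is exact, being the filtered $\K$-theory of an actual \ca{} over $X$, and $SP_Y$ is likewise exact; similarly $Q_Y$ and $SQ_Y$ are exact as right $\NT$-modules.

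First I would observe that checking exactness of an $\NT$-module $N$ amounts, by definition, to checking that for every $Y\in\LC(X)$ and $U\in\Op(Y)$ the six cyclic sequences built from $i,r,\delta$ at $N$ are exact. For the tensor case, take a free resolution, or simply the defining property: if $\NT e$ is torsion-free as an abelian group, then $\NT e \otimes_\Z -$ is an exact functor on abelian groups, so applying $\NT e \otimes_\Z -$ entrywise to the already-exact six-term sequences of $\NT e$ (viewed as an $\NT$-module) against the flat group $G$ — wait, more precisely: for fixed $Y,U$ the relevant six-term complex of the $\NT$-module $\NT e \otimes_\Z G$ is obtained from the six-term complex of the $\NT$-module $\NT e$ by applying $-\otimes_\Z G$ at each object $(Z,j)$, because the structure maps of $\NT e \otimes_\Z G$ are those of $\NT e$ tensored with $\id_G$ (this is exactly how the $R$-module structure on $M\otimes_\Z G$ was defined in Definition~\ref{def-functorialiso2}). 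Since $(\NT e)(Z,j) = e(Z,j)\cdot\NT e$ is a direct summand of $\NT e$ hence torsion-free, the complex of abelian groups $((\NT e)(Z,j))_{(Z,j)\text{ in the hexagon}}$ is a bounded complex of torsion-free (in fact, for $P_Y$, free) groups that is exact; tensoring an exact complex of torsion-free abelian groups with an arbitrary group $G$ keeps it exact — here one uses that the homology groups vanish and that $\mathrm{Tor}_1^\Z(-,G)$ vanishes on torsion-free groups, so the spliced short exact sequences stay exact after $\otimes_\Z G$. Hence $\NT e\otimes_\Z G$ is exact.

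For the Hom case the argument is dual: the structure maps of $\Hom_\Z(e\NT,G)$ are, by Definition~\ref{def-functorialiso1}, obtained from those of the right $\NT$-module $e\NT$ by applying $\Hom_\Z(-,G)$, so the six-term complex at $\Hom_\Z(e\NT,G)$ for a given $Y,U$ is $\Hom_\Z(-,G)$ applied to the six-term complex of the right module $e\NT$. When $e\NT$ is a free abelian group, each entry $(e\NT)(Z,j) = e\NT\cdot e_{(Z,j)}$ is a direct summand of a free group, hence projective, so the six-term complex of $e\NT$ is an exact bounded complex of projective abelian groups; applying the exact-on-projectives functor $\Hom_\Z(-,G)$ (equivalently, using $\mathrm{Ext}^1_\Z(P,G)=0$ for projective $P$) preserves exactness. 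Therefore $\Hom_\Z(e\NT,G)$ is exact. The main obstacle, such as it is, is the bookkeeping identification that the hexagon complexes of $\NT e\otimes_\Z G$ and $\Hom_\Z(e\NT,G)$ really are the $\otimes_\Z G$ and $\Hom_\Z(-,G)$ images of the corresponding hexagon complexes of $\NT e$ and $e\NT$; once that is pinned down via Definitions~\ref{def-functorialiso1} and~\ref{def-functorialiso2} and the exactness of $P_Y$, $SP_Y$, $Q_Y$, $SQ_Y$, the homological input is standard.
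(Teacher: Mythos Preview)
Your overall strategy is sound and the homological core is correct, but there is one genuine error in the setup: it is \emph{not} true that an arbitrary idempotent $e$ in the category ring $\NT$ is a finite sum of the canonical object idempotents $e_Y,\bar e_Y$. (Already in $M_2(\Z)$ there are idempotents such as $\begin{smallpmatrix}1&1\\0&0\end{smallpmatrix}$ that are not sums of diagonal idempotents.) So your reduction to $e=e_Y$ or $\bar e_Y$ is unjustified. The fix is immediate and in fact simpler than what you wrote: $\NT e$ is a direct summand of $\NT=\bigoplus_Y P_Y\oplus\bigoplus_Y SP_Y$ as a left $\NT$-module, each $P_Y,SP_Y$ is exact (being $\FK$ of an actual \ca over $X$), and exactness passes to direct sums and direct summands; hence $\NT e$ is exact for \emph{any} idempotent $e$. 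The analogous statement for the right module $e\NT$ follows the same way from the six-term exact sequence for $\KK(X;\Rcal_Y,-)$ in the second variable. Also, the six-term sequences are cyclic, not bounded; your splicing argument is unaffected by this, since subgroups of torsion-free (resp.\ free) abelian groups are torsion-free (resp.\ free), so the $\mathrm{Tor}_1^\Z$ (resp.\ $\Ext^1_\Z$) obstructions still vanish.

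With that repair your route is valid but genuinely different from the paper's. The paper never analyses the hexagon complexes directly; instead it first treats the special cases where $G$ is projective (resp.\ injective), in which $\NT e\otimes_\Z G$ (resp.\ $\Hom_\Z(e\NT,G)$) is a direct sum of copies of a projective (resp.\ injective) $\NT$-module and therefore exact, and then handles a general $G$ by choosing a projective resolution $P_1\into P_0\onto G$ (resp.\ an injective resolution $G\into I_0\onto I_1$), tensoring (resp.\ homming) to obtain a short exact sequence of $\NT$-modules, and invoking the two-out-of-three property of exactness (Lemma~\ref{lem-two-out-of-three}). Your argument is more hands-on and self-contained, while the paper's is more modular, recycling Lemma~\ref{lem-two-out-of-three} and the resolution machinery already in place from Proposition~\ref{prop-proj-inj-dim}; both ultimately rest on the same fact that $\NT e$ is an exact $\NT$-module.
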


\begin{proof}
If $G$ is a projective $\Z$-module, then $\NT{}e \otimes_{\Z} G$ is exact since $- \otimes_{\Z} G$ is an exact functor.  If $G$ is injective, then $\Hom_\Z(e \NT{}, G)$ is exact since $\Hom_\Z( - ,G)$ is an exact functor.  

Using this, we now prove the general case.  Let $P_1\into P_0\onto G$ be a projective resolution of $G$, and let $G\into I_0\onto I_1$ be an injective resolution of $G$.

If the group $\NT e$ is torsion-free, then $\NT e \otimes_\Z -$ is an exact functor of $\Z$-modules.  As in the proof of Proposition~\ref{prop-proj-inj-dim}, we see that
\[ \NT{}e \otimes_{\Z} P_{0} \into \NT{}e \otimes_{\Z} P_{1} \onto \NT{}e \otimes_{\Z} G \]
is an exact sequence of $\NT$-modules.  By Lemma~\ref{lem-two-out-of-three}, we have that $\NT e \otimes_\Z G$ is an exact $\NT$-module.

If the group $e\NT$ is a free abelian group, then $\Hom_\Z(e\NT, -)$  is an exact functor of $\Z$-modules.  So similarly we conclude that
\[ \Hom_\Z(e \NT{},G) \into \Hom_\Z(e \NT{} , I_{0} ) \onto \Hom_\Z(e \NT{} , I_{1} ) \]
is an exact sequence of $\NT$-modules and thereby that $\Hom_\Z(e\NT,G)$ is an exact $\NT$-module.
\end{proof}

\begin{observation} \label{obs:hom-tensor-identical}
We will be considering $\NT$-modules over $\PS$ of the forms $S^iP_Y\otimes_\Z G$ and $\Hom_Z(S^jQ_Z, G)$ for certain $Y$ and $Z$.  We note the following about some of the latter form.
Let $M$ be a right $\NT$-module over $\PS$, and let $N=\Hom_\Z(M,\Z)$ be its dual.
For $Y,Z\in\LC(\PS)$ and $\tau\in\NT(Y,Z)$,
the map $N(Y)\xrightarrow{N(\tau)} N(Z)$ is defined as $\Hom_\Z(M(Y),\Z)\xrightarrow{\Hom_\Z(M(\tau),\Z)}\Hom_\Z(M(Z),\Z)$
induced by $M(Y)\xleftarrow{M(\tau)}M(Z)$. 
Using that $S^iQ_V(Y)$ is finitely generated and free for all $Y,V\in
\LC(\PS)$ to compute $\Hom_\Z(S^iQ_V,\Z)$, we observe the following symmetry over $\PS$:
\begin{align*}
 S^iP_j &\cong \Hom_\Z(S^iQ_{12j},\Z)\\
 S^iP_{k34} &\cong \Hom_\Z(S^iQ_k,\Z)
\end{align*}
as $\NT$-modules for all $i\in\{0,1\}$, $j\in\{3,4\}$, and $k\in\{1,2\}$.
\end{observation}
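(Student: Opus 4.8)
The plan is to establish the two displayed families of isomorphisms by a direct computation with the explicit model of $\NT$ over $\PS$ used in \cite[Section~6.3]{bentmann:master}. The starting point is the fact already invoked in the statement: every morphism group of $\NT$ over $\PS$ is finitely generated and free as an abelian group, so each $S^iQ_V$ is levelwise finitely generated and free. Consequently $\Hom_\Z(S^iQ_V,\Z)$ is again a left $\NT$-module which is levelwise finitely generated and free of the same ranks as $S^iQ_V$, and, by the description of the $\NT$-action on $\Hom_\Z(M,\Z)$ recalled immediately before the statement, its structure maps are the $\Z$-transposes of those of $S^iQ_V$. Thus the assertion is a comparison of two levelwise finitely generated free left $\NT$-modules over $\PS$ whose graded ranks already coincide, and it remains only to match up their structure maps.

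The conceptual reason for the symmetry is that $\PS$ is self-dual: reversing the specialization order carries $\PS$ to itself (up to relabelling of points), exchanging each open point $j\in\{3,4\}$ with its closure $\overline{\{j\}}=\{1,2,j\}$ and the minimal open neighbourhood $\widetilde{\{k\}}=\{k,3,4\}$ of each closed point $k\in\{1,2\}$ with $\{k\}$ itself. The most economical organization would be to promote this to a self-duality of $\NT$ over $\PS$ and deduce all four isomorphisms at once; but since only the eight projective modules $S^iP_3$, $S^iP_4$, $S^iP_{134}$, $S^iP_{234}$ and their partners are involved, I would instead verify the isomorphisms by hand.

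Concretely, for each $j\in\{3,4\}$ I would read off from the diagram~\eqref{eq-diagram-pseudocircle} the left module $P_j$ --- its groups $P_j(Y,i)=\KK_0(\PS;\Sigma^i\Rcal_Y,\Rcal_j)$ and the action of the generating morphisms $i$, $r$, $\delta$ of $\NT$ --- together with the right module $Q_{12j}$, and hence $\Hom_\Z(Q_{12j},\Z)$ with its transposed action; then I would write down the evident map sending a chosen generator of $P_j(Y,i)$ to the dual basis element of $\Hom_\Z(Q_{12j},\Z)(Y,i)$ at each object $(Y,i)$, and check that it intertwines the finitely many generating morphisms of $\NT$. As the map is bijective at every object, this exhibits it as an isomorphism of $\NT$-modules; the case $i=1$ then follows by applying the suspension functor. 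The isomorphisms $S^iP_{k34}\cong\Hom_\Z(S^iQ_k,\Z)$ for $k\in\{1,2\}$ follow either by the same computation or by transporting the first half along the $1\leftrightarrow 2$, $3\leftrightarrow 4$ automorphism of $\PS$. The only real obstacle is organizational rather than conceptual: $\NT$ over $\PS$ has on the order of a dozen objects in each degree and a correspondingly sizeable multiplication table, so the actual work --- and the only place an error is likely --- lies in keeping track of suspension degrees, the structure maps, and signs while verifying compatibility with all generating morphisms.
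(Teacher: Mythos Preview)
Your proposal is correct and matches the paper's approach: the observation is stated without a separate proof, the paper simply indicating that one computes $\Hom_\Z(S^iQ_V,\Z)$ using that each $S^iQ_V(Y)$ is finitely generated and free, which is exactly the direct verification you outline. Your additional remark about the self-duality of $\PS$ is a helpful conceptual gloss, but the actual argument in both cases is the levelwise computation with the explicit module data from diagram~\eqref{eq-diagram-pseudocircle}.
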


\begin{proposition}\label{prop:ext-vanishes}
Let $M$ be an exact, \rrzero{} $\NT$-module over $\PS$.
%Let $M=\FKS(\A)$ for some separable, tight \ca over $\PS$.\fxnote{Do we use that it's tight?}
%Note that we can consider this as an $\NT{}$-module.
%Assume, moreover, that $M$ is \rrzero{}. 
Then $\Ext_\NT^2(M,SM)=0$. 
\end{proposition}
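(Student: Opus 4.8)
The plan is to reduce $\Ext_\NT^2(M,SM)$, through finitely many short exact sequences, to the situation already settled in Lemma~\ref{lem-ext-2-is-zero-for-reduced}. Note first that, since the \emph{same} module $M$ occupies both variables, that lemma (applied with $N=M$) yields the conclusion as soon as the eight groups $M(3_1)$, $M(4_1)$, $M(134_1)$, $M(234_1)$, $M(1_0)$, $M(2_0)$, $M(123_0)$, $M(124_0)$ all vanish --- the four conditions it imposes on the second argument form a sub-collection of these eight. So it suffices to connect $M$ to such a module through short exact sequences across which the vanishing of $\Ext^2(-,S-)$ propagates, and that propagation is exactly what Lemma~\ref{lem-reduction-ext-2} provides.

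The modules used for the reduction are $S^iP_Y\otimes_\Z G$ with $Y\in\{3,4,134,234\}$, $i\in\{0,1\}$ and $G$ an arbitrary abelian group --- the projective generators attached to the open points and to the closures of the closed points of $\PS$. Three properties make them suitable. First, $S^iP_Y\otimes_\Z G=\NT\bar e\otimes_\Z G$ for an idempotent $\bar e$ with $\NT\bar e$ torsion-free over $\Z$ (a finite verification inside the category ring of $\PS$, governed by diagram~\eqref{eq-diagram-pseudocircle}), so $\pdim_\NT(S^iP_Y\otimes_\Z G)\le 1$ by Proposition~\ref{prop-proj-inj-dim}. Second, the symmetries in Observation~\ref{obs:hom-tensor-identical}, combined with Lemma~\ref{lemma:hom-tensor-iso} (applicable because every $S^jQ_Z$ is finitely generated and free over $\Z$), give
\[ S^iP_3\otimes_\Z G\cong\Hom_\Z(S^iQ_{123},G),\qquad S^iP_4\otimes_\Z G\cong\Hom_\Z(S^iQ_{124},G), \]
\[ S^iP_{134}\otimes_\Z G\cong\Hom_\Z(S^iQ_{1},G),\qquad S^iP_{234}\otimes_\Z G\cong\Hom_\Z(S^iQ_{2},G), \]
whence $\idim_\NT(S^iP_Y\otimes_\Z G)\le 1$, again by Proposition~\ref{prop-proj-inj-dim}. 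Third, $S^iP_Y\otimes_\Z G$ is an exact $\NT$-module by Lemma~\ref{l:exact}. Call such a module, and any finite direct sum of such, \emph{doubly good}: it is exact and has both projective and injective dimension at most $1$.

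The reduction is then carried out as follows. Using Remark~\ref{remark-proj-homo} with the identity maps $M(Y_1)\to\bar e_Y M=M(Y_1)$ one obtains canonical $\NT$-morphisms $SP_Y\otimes_\Z M(Y_1)\to M$ that are surjective in degree $Y_1$; dually, Remark~\ref{remark-inj-homo} with the identity maps on $M(1_0),M(2_0),M(123_0),M(124_0)$ yields canonical $\NT$-morphisms from $M$ into doubly-good modules that are injective in the corresponding even degrees. Exploiting the special shape of an exact, \rrzero{} $\NT$-module over $\PS$ --- concretely, the surjectivity of maps such as $M(3_1)\oplus M(4_1)\to M(134_1)$ and $M(3_1)\oplus M(4_1)\to M(234_1)$ visible in diagram~\eqref{eq-diagram-pseudocircle}, precisely the structural facts used at the beginning of the proof of Lemma~\ref{lem-ext-2-is-zero-for-reduced} --- one assembles from these maps short exact sequences $0\to A\to M\to B\to 0$ in which one of $A,B$ is doubly good while the other is again exact and \rrzero{} (by Lemma~\ref{lem-two-out-of-three}) and has strictly more of the eight listed groups equal to zero. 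Since a doubly-good term has projective dimension at most $1$, Lemma~\ref{lem-reduction-ext-2} reduces the vanishing of $\Ext_\NT^2(M,SM)$ to that of $\Ext_\NT^2(M',SM)$ for the other, ``smaller'' term $M'$; iterating in the first variable we reach $\tilde M$, exact and with all eight groups zero. Since a doubly-good term also has injective dimension at most $1$, the analogous (suspended) sequences in the second variable reduce $\Ext_\NT^2(\tilde M,SM)$ to $\Ext_\NT^2(\tilde M,S\tilde N)$ for an exact module $\tilde N$ with its four even-degree groups $(1_0),(2_0),(123_0),(124_0)$ zero. (Proposition~\ref{prop-fourpoints-dim-most-2} guarantees that only finitely many steps are needed.) Lemma~\ref{lem-ext-2-is-zero-for-reduced} now applies and gives $\Ext_\NT^2(\tilde M,S\tilde N)=0$, and unwinding the reductions yields $\Ext_\NT^2(M,SM)=0$.

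The step I expect to be the main obstacle is the bookkeeping in the preceding paragraph: one must choose the short exact sequences so that the piece split off at each stage is genuinely of the doubly-good form $S^iP_Y\otimes_\Z G$ with $Y\in\{3,4,134,234\}$ --- not merely a subquotient of one, which would in general fail to have finite dimension on both sides --- so that every residual module stays exact and \rrzero{} (keeping Lemma~\ref{lem-two-out-of-three} and, at the last step, the exactness hypothesis of Lemma~\ref{lem-ext-2-is-zero-for-reduced} in force), and so that the targeted groups are cleared in an admissible order. Both the surjectivity and injectivity properties of the canonical maps of Remarks~\ref{remark-proj-homo} and~\ref{remark-inj-homo}, and the torsion-freeness of $\NT\bar e$ for the idempotents in play, are finite checks dictated entirely by the combinatorics of diagram~\eqref{eq-diagram-pseudocircle}.
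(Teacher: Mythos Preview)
Your plan is correct and matches the paper's approach: both use the ``doubly good'' modules $S^iP_Y\otimes_\Z G$ for $Y\in\{3,4,134,234\}$ (equivalently $\Hom_\Z(S^iQ_Z,G)$ for $Z\in\{123,124,1,2\}$, via Observation~\ref{obs:hom-tensor-identical}), exploit that these have both projective and injective dimension at most~$1$, and reduce via Lemma~\ref{lem-reduction-ext-2} to the base case Lemma~\ref{lem-ext-2-is-zero-for-reduced}.

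The one organisational difference is that the paper reduces \emph{both} variables simultaneously: from a sequence $K\hookrightarrow M\twoheadrightarrow L$ with $K$ doubly good it passes directly from $\Ext^2_\NT(M,SM)=0$ to $\Ext^2_\NT(L,SL)=0$, using $\pdim K\le 1$ for the first variable and $\idim SK\le 1$ for the second in one stroke. This keeps a single module in play throughout and needs eight steps rather than your twelve, so one never has to manage separate $\tilde M$ and $\tilde N$. Your two-phase version works too, but you then have to recheck the epimorphism conditions for the second-variable reductions starting from the original $M$; these go through because the relevant boundary maps $\delta_{-_0}^{-_1}$ vanish by the \rrzero{} hypothesis alone, not because the odd-degree groups have already been killed.

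Two small corrections. The order of the eight reductions is not free bookkeeping: one must kill $3_1,4_1$ before $134_1,234_1$ (the monomorphism check for $SP_{134}\otimes G\to M$ at $13_1$ uses $M(4_1)=0$, since the obstruction there is an $i$-map, not a $\delta$), and $1_0,2_0$ before $123_0,124_0$ (the epimorphism check at $13_0\to 123_0$ uses $M(2_0)=0$, since the obstruction is an $r$-map). And Proposition~\ref{prop-fourpoints-dim-most-2} plays no role in bounding the number of reduction steps---that is fixed at eight by the list of groups---it is used only inside the proof of Lemma~\ref{lem-ext-2-is-zero-for-reduced} to conclude that the final kernel $P'''$ is projective.
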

\begin{proof}
We proceed in several steps to reduce to a case covered by Lemma~\ref{lem-ext-2-is-zero-for-reduced}.

First we show that is suffices to establish that $\Ext_\NT^2(M,SM)=0$ for all exact, \rrzero{} $\NT$-modules $M$ with $M(3_1)=0$.
Define $K=SP_3\otimes_\Z M(3_1)$ and note that by Lemma~\ref{l:exact}, $K$ is an exact $\NT$-module since $SP_3=\NT\bar e_3$.   Furthermore, $K$ is isomorphic to $\Hom_\Z(SQ_{123},M(3_1))$ by Lemma~\ref{lemma:hom-tensor-iso} and Observation~\ref{obs:hom-tensor-identical}.  So by Proposition~\ref{prop-proj-inj-dim}, $K$ has projective and injective dimension at most 1.
By Remark~\ref{remark-proj-homo}, we may uniquely extend the identity map on $M(3_1)$ to an $\NT$-homomorphism $K\to M$.
We now prove that this map $K\to M$ is a monomorphism, i.e., that each of the maps are injective when we look at the diagram~\eqref{eq-diagram-pseudocircle}. 
By construction, it is enought to check that the maps from $M(3_1)$ to $M(Y_1)$ are injective for all $Y\in\{134,234,1234,13,23,123\}$.  Since $M$ is \rrzero{} and exact, this is evident using the exactness of  six-term sequences in $M$ where the maps occur.
Since $M$ and $K$ are exact and \rrzero{}, we have a short exact sequence 
$K\into M\onto L$ where $L$ is exact and \rrzero{} and $L(3_1)=0$ (cf.\ Lemma~\ref{lem-two-out-of-three}).  By Lemma~\ref{lem-reduction-ext-2}, $\Ext_\NT^2(M,SM)=0$ if and only if $\Ext_\NT^2(L,SL)=0$.

A symmetrical argument can be carried through for $4_1$.
Therefore, it suffices to establish $\Ext_\NT^2(M,SM)=0$ for all exact $\NT$-modules $M$ with $M(3_1)=0$ and $M(4_1)=0$.

Assume that $M$ is exact with $M(3_1)=0$ and $M(4_1)=0$, and consider now the exact $\NT$-module $K=SP_{134}\otimes_\Z M(134_1)$. 
By Lemma~\ref{lemma:hom-tensor-iso} and Observation~\ref{obs:hom-tensor-identical}, $K$ is isomorphic to $\Hom_\Z(SQ_{1},M(134_1))$.  So by Proposition~\ref{prop-proj-inj-dim}, $K$ has projective and injective dimension at most 1.
Extend the identity on $M(134_1)$ uniquely to an $\NT$-homomorphism $K\to M$ (cf.\ Remark~\ref{remark-proj-homo}).
We note that the map $K\to M$ is a monomorphism, i.e., that each of the maps are injective when we look at the diagram~\eqref{eq-diagram-pseudocircle}. 
By construction,
it is enough to check that the maps from $M(134_1)$ to $M(Y_1)$ are injective for all $Y\in\{1234,13,14,123,124,1\}$.
Using that $M$ is exact with $M(3_1)=0$ and $M(4_1)=0$, it is easy to check for all these maps but $M(134_1)\rightarrow M(123_1)$ and $M(134_1)\rightarrow M(124_1)$ that they are injective.
We show that $M(134_1)\rightarrow M(123_1)$ is injective, the other case is analogous. 
First note that the commutativity relations say that the map $M(134_1)\rightarrow M(123_1)$ is equal to both $i_{13_1}^{123_1}r_{134_1}^{13_1}$ and $r_{1234_1}^{123_1}i_{134_1}^{1234_1}$. 
We have already seen that $r_{134_1}^{13_1}$ is injective, and injectivity of $i_{13_1}^{123_1}$ is evident using exactness of $M$ and that $M(3_1)=0$. 
Since $M$ and $K$ are exact and \rrzero{}, we have a short exact sequence 
$K\into M\onto L$ where $L$ is exact and \rrzero{} (cf.\ Lemma~\ref{lem-two-out-of-three}). Note that $L(3_1)$, $L(4_1)$, and $L(134_1)$ vanish.  By Lemma~\ref{lem-reduction-ext-2}, $\Ext_\NT^2(M,SM)=0$ if and only if $\Ext_\NT^2(L,SL)=0$.

A symmetrical argument can be carried through for $234_1$.
Therefore, it suffices to establish $\Ext_\NT^2(M,SM)=0$ for all exact $\NT$-modules $M$ with $M(3_1)$, $M(4_1)$, $M(134_1)$, and $M(234_1)$  vanishing.

Assume that $M$ is exact with with $M(3_1)$, $M(4_1)$, $M(134_1)$, and $M(234_1)$  vanishing.
Define $L=\Hom_\Z(Q_1, M(1_0))$ and note that by Lemma~\ref{l:exact}, $L$ is exact.
Furthermore, $L$ is isomorphic to $P_{134}\otimes_\Z M(1_0)$ by Lemma~\ref{lemma:hom-tensor-iso} and Observation~\ref{obs:hom-tensor-identical}.  So by Proposition~\ref{prop-proj-inj-dim}, $L$ has injective and projective dimension at most 1.
By Remark~\ref{remark-inj-homo}, we may uniquely extend the identity on $M(1_0)$ to an $\NT$-homomorphism $M\to L$.
We now prove that this map $M\to L$ is an epimorphism, i.e., that each of the maps are surjective when we look at the diagram~\eqref{eq-diagram-pseudocircle}. 
By construction, 
it is enough to check that the maps from $M(Y_0)$ to $M(1_0)$ are surjective for all $Y\in\{134,1234,13,14,123,124\}$. 
Because $M$ is exact and has certain groups vanishing, this is evident from the exactness of certain six-term sequences.
Since $M$ and $L$ are exact and \rrzero{}, we have a short exact sequence 
$K\into M\onto L$ where $K$ is exact and \rrzero{} (cf.\ Lemma~\ref{lem-two-out-of-three}). Note that $K(3_1)$, $K(4_1)$, $K(134_1)$, $K(234_1)$, and $K(1_0)$ vanish.  By Lemma~\ref{lem-reduction-ext-2}, $\Ext_\NT^2(M,SM)=0$ if and only if $\Ext_\NT^2(K,SK)=0$.

A symmetrical argument can be carried through for $2_0$.
Therefore, it suffices to establish $\Ext_\NT^2(M,SM)=0$ for all exact $\NT$-modules $M$ with $M(3_1)$, $M(4_1)$, $M(134_1)$, $M(234_1)$, $M(1_0)$, and $M(2_0)$  vanishing.

Assume that $M$ is exact with $M(3_1)$, $M(4_1)$, $M(134_1)$, $M(234_1)$, $M(1_0)$, and $M(2_0)$  vanishing.
Consider the exact $\NT$-module $L=\Hom_\Z(Q_{123},M(123_0))$, and note that it is isomorhic to $P_{3}\otimes_\Z M(123_0)$ by Lemma~\ref{lemma:hom-tensor-iso} and Observation~\ref{obs:hom-tensor-identical}.
By Proposition~\ref{prop-proj-inj-dim}, $L$ has injective and projective dimension at most 1.
Define an $\NT$-homomorphism $M\to L$ by uniquely extending the identity on $M(123_0)$ using Remark~\ref{remark-inj-homo}.
We now prove that this map $M\to L$ is an epimorphism, i.e., that each of the maps are surjective when we look at the diagram~\eqref{eq-diagram-pseudocircle}. 
By construction, 
it is enough to check that the maps from $M(Y_0)$ to $M(123_0)$ are surjective for all $Y\in\{3,134,234,1234,13,23\}$. 
But since $M$ is exact and has certain groups vanishing, this is evident from certain six-term exact sequences for all maps but the maps $M(134_0)\rightarrow M(123_0)$ and $M(134_0)\rightarrow M(124_0)$. 
We show it for the former, the latter is analogous. 
First note that the commutativity relations say that the map $M(134_0)\rightarrow M(123_0)$ is equal to both $i_{13_0}^{123_0}r_{134_0}^{13_0}$ and $r_{1234_0}^{123_0}i_{134_0}^{1234_0}$. 
We have already seen that $i_{13_0}^{123_0}$ is surjective, and surjectivity of $r_{134_0}^{13_0}$ is evident from a six-term exact sequence in $M$ using that $M(4_1)=0$. 
Since $M$ and $L$ are exact and \rrzero{}, we have a short exact sequence 
$K\into M\onto L$ where $K$ is exact and \rrzero{} (cf.\ Lemma~\ref{lem-two-out-of-three}).  Note that $K(3_1)$, $K(4_1)$, $K(134_1)$, $K(234_1)$, $K(1_0)$, $K(2_0)$ and $K(123_0)$ vanish.  By Lemma~\ref{lem-reduction-ext-2}, $\Ext_\NT^2(M,SM)=0$ if and only if $\Ext_\NT^2(K,SK)=0$.

A symmetrical argument can be carried through for $124_0$.
Therefore, it suffices to establish $\Ext_\NT^2(M,SM)=0$ for all exact $\NT$-modules $M$ with $M(3_1)$, $M(4_1)$, $M(134_1)$, $M(234_1)$, $M(1_0)$, $M(2_0)$, $M(123_0)$, and $M(124_0)$  vanishing.  And this follows from Lemma~\ref{lem-ext-2-is-zero-for-reduced}.
\end{proof}

\begin{corollary}
Let \A and \B be real rank zero, separable \cas over $\PS$ in the bootstrap class $\mathcal{B}(\PS)$. 
Assume that there is an isomorphism $\alpha\colon\FKS(\A)\rightarrow\FKS(\B)$. 
Then there exists a $\KK_{\PS}$ equivalence $x$ such that $\FKS(x)=\alpha$. 
\end{corollary}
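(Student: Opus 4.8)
The plan is to combine the vanishing statement of Proposition~\ref{prop:ext-vanishes} with the projective dimension bound of Proposition~\ref{prop-fourpoints-dim-most-2} and then invoke the lifting corollary obtained earlier in this section from \cite[Corollary~2.5]{rasmus-and-ralf}. Since $\PS$ has exactly four points, filtrated $\K$-theory coincides with the \cirkt over $\PS$, so I may and will identify $M:=\FKS(\A)$ with $\FK(\A)$ and regard it as an $\NT$-module over $\PS$; likewise for \B. In particular $\alpha$ becomes an isomorphism of $\NT$-modules over $\PS$.

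First I would record that, because \A is a real rank zero, separable \ca over $\PS$, the $\NT$-module $M=\FK(\A)$ is exact and \rrzero{}; this is exactly what is observed right after the definition of exact and \rrzero{} $\NT$-modules. Proposition~\ref{prop:ext-vanishes} then applies directly and gives $\Ext_\NT^2(M,SM)=0$. Next, since $\PS$ has at most four points, Proposition~\ref{prop-fourpoints-dim-most-2} shows that $M=\FK(\A)$ has projective dimension at most $2$ in the category of $\NT$-modules.

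With these two facts in hand, the hypotheses of the lifting corollary are satisfied for the pair $(\A,\B)$: both algebras are separable and lie in $\mathcal{B}(\PS)$ by assumption, $\FK(\A)$ has projective dimension at most $2$, and $\Ext_\NT^2(\FK(\A),S\FK(\A))=0$. Hence the given isomorphism $\alpha\colon\FKS(\A)\to\FKS(\B)$ lifts to an equivalence $x\in\KK_{\PS}(\A,\B)$ with $\FKS(x)=\alpha$, which is precisely the assertion.

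I do not expect any genuine obstacle at this last step: the substantial work is already contained in Proposition~\ref{prop:ext-vanishes} (the stepwise reduction peeling off the groups $M(3_1),M(4_1),M(134_1),M(234_1),M(1_0),M(2_0),M(123_0),M(124_0)$ one family at a time via the $\Hom$/$\otimes$ symmetry of Observation~\ref{obs:hom-tensor-identical} and the dimension estimates of Proposition~\ref{prop-proj-inj-dim}) together with the homological algebra of the preliminary section, while the present corollary is a formal consequence. The only subtlety worth stating explicitly is the identification $\FKS=\FK$ over $\PS$, so that the cited results, phrased in terms of \FK and of $\NT$-modules, apply verbatim.
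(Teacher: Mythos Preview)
Your proposal is correct and follows exactly the route the paper intends: the corollary is stated without proof because it is the immediate concatenation of Proposition~\ref{prop-fourpoints-dim-most-2}, Proposition~\ref{prop:ext-vanishes}, and the lifting corollary derived from \cite[Corollary~2.5]{rasmus-and-ralf}, using that \FKS coincides with \FK over \PS and that $\FK(\A)$ is exact and \rrzero{} when \A has real rank zero. There is nothing to add.
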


\begin{corollary}\label{cor-strong-classification-pseudo-circle}
Let \A and \B be real rank zero \KSas in the bootstrap class $\mathcal{B}(\PS)$. 
Assume that there is an isomorphism $\alpha\colon\FKS(\A)\rightarrow\FKS(\B)$. 
Then there exists an $\PS$-equivariant isomorphism $\phi$ such that $\FKS(\phi)=\alpha$. 
\end{corollary}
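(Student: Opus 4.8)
The plan is to deduce Corollary~\ref{cor-strong-classification-pseudo-circle} from the preceding Corollary (the stable $\KK_{\PS}$-existence result) combined with Kirchberg's lifting theorem and the reduction of the unital case to the stable case from the literature. First I would treat the stable case. Given real rank zero stable \KSas \A and \B in $\mathcal{B}(\PS)$ and an isomorphism $\alpha\colon\FKS(\A)\to\FKS(\B)$, the preceding corollary (which rests on Proposition~\ref{prop:ext-vanishes} that $\Ext_\NT^2(M,SM)=0$ for exact, real-rank-zero $M=\FKS(\A)$, and on Proposition~\ref{prop-fourpoints-dim-most-2} that the projective dimension is at most $2$) produces a $\KK_{\PS}$-equivalence $x$ with $\FKS(x)=\alpha$. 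Then, since \A and \B are separable, nuclear, stable, and strongly purely infinite (pure infiniteness and $\Ocal_\infty$-absorption being equivalent here by \cite[Corollary~1]{ERR:gjogv}), Kirchberg's classification theorem lifts the invertible ideal-related $\KK_{\PS}$-element $x$ to a $\PS$-equivariant \iso $\phi\colon\A\to\B$; and since $\phi$ induces $x$ on $\KK_{\PS}$, it induces $\alpha$ on $\FKS$. That settles the stable real rank zero case.

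Next I would pass to the general (possibly unital, possibly nonstable) case. Here the strategy is to stabilise: replace \A and \B by $\A\otimes\KO$ and $\B\otimes\KO$. An isomorphism $\alpha\colon\FKS(\A)\to\FKS(\B)$ that preserves the class of the unit induces an isomorphism on $\FKS(\A\otimes\KO)\to\FKS(\B\otimes\KO)$ (the functor $\FKS$ is stable, so this is canonical), and by the stable case just proved there is an $\PS$-equivariant \iso $\A\otimes\KO\to\B\otimes\KO$ inducing it. One then invokes the results of \cite{MR2379290,MR2265044,arXiv:1301.7695v1}, as recalled in~\S\ref{sec:overview}, which say that for purely infinite \cas a strong classification in the stable case, together with a compatibility with the class of the unit, upgrades to a strong classification in the unital case: the $\PS$-equivariant \iso between stabilisations can be chosen to restrict to a unital $\PS$-equivariant \iso $\A\to\B$ inducing $\alpha$. (If \A and \B are already stable one simply omits this step.)

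The main obstacle is really located in the already-proved inputs rather than in this corollary: the content is Proposition~\ref{prop:ext-vanishes} — the vanishing of $\Ext_\NT^2(M,SM)$ for exact, real-rank-zero $\NT$-modules over $\PS$ — since \PS{} has no refined invariant and one must argue purely homologically. Granting that, and granting Proposition~\ref{prop-fourpoints-dim-most-2}, the preceding corollary follows from \cite[Corollary~2.5]{rasmus-and-ralf}. The only genuine care needed at the level of Corollary~\ref{cor-strong-classification-pseudo-circle} itself is bookkeeping: checking that a $\KK_{\PS}$-equivalence inducing $\alpha$ on $\FKS$ can be lifted \PS{}-equivariantly (Kirchberg), and that the unital refinement (class of the unit) is genuinely tracked through the stabilisation-and-destabilisation argument of \cite{arXiv:1301.7695v1}. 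I would write the proof as: invoke the preceding corollary to obtain $x$; invoke Kirchberg's theorem to lift $x$; if necessary invoke \cite{arXiv:1301.7695v1} to obtain the unital statement.
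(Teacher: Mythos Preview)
Your proposal is correct and matches the paper's (implicit) approach: the paper gives no proof for this corollary, treating it as the evident combination of the preceding corollary (the $\KK_{\PS}$-existence result, resting on Proposition~\ref{prop:ext-vanishes} and Proposition~\ref{prop-fourpoints-dim-most-2}) with Kirchberg's lifting theorem~\cite{MR1796912}. Your additional discussion of the unital case via the stable-to-unital upgrade of~\cite{arXiv:1301.7695v1} is more than the bare corollary requires but is exactly how the paper uses Corollary~\ref{cor-strong-classification-pseudo-circle} to feed into Theorem~\ref{thm-fullFK}(2), so it is appropriate to include.
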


\section*{Acknowledgement}
This work was supported by the Danish National Research Foundation through the Centre for Symmetry and Deformation (DNRF92) and by a grant from the Simons Foundation (\#279369 to Efren Ruiz).
The second and third named authors also want to thank the Department of Mathematical Sciences at University of Copenhagen for hospitality during visits where this work was carried out. 
Also the authors thank Rasmus Bentmann for sharing early versions of the papers \cite{bentmann:gjogv} and \cite{rasmus-and-ralf}.

%\bibliographystyle{hamsalpha}%amsra}
%\bibliography{article,preprints,theses,arxiv}

\providecommand{\bysame}{\leavevmode\hbox to3em{\hrulefill}\thinspace}
\providecommand{\MR}{\relax\ifhmode\unskip\space\fi MR }
% \MRhref is called by the amsart/book/proc definition of \MR.
\providecommand{\MRhref}[2]{%
  \href{http://www.ams.org/mathscinet-getitem?mr=#1}{#2}
}
\providecommand{\href}[2]{#2}

\end{document}